\def\derpar#1#2{\frac{\partial#1}{\partial#2}}
\def\R{\mathbb R}
\def\N{\mathbb N}
\def\Z{\mathbb Z}
\def\E{\mathbb E}
\def\X{\mathcal X}
\def\Sn{{S^{N-1}}} 
\def\nSn{{S^{N-1}(\sqrt{N})}} 
\def\on{{\otimes N}}      
\def\d{{\rm d}}
\def\leb{\cal L}     
\def\var{\varepsilon}
\def\ov{\overline}
\def\cal{\mathcal}
\def\hat{\widehat}
\def\dps{\displaystyle}
\def\med{\medskip}
\def\sm{\smallskip}
\def\begeq{\begin{equation}}
\def\endeq{\end{equation}}
\def\begar{\begin{eqnarray}}
\def\endar{\end{eqnarray}}
\def\begar*{\begin{eqnarray*}}
\def\endar*{\end{eqnarray*}}
\def\begal{\begin{align}}
\def\endal{\end{align}}
\def\begal*{\begin{align*}}
\def\endal*{\end{align*}}
\newtheorem{Thm}{Theorem}
\newtheorem{Lem}[Thm]{Lemma}
\newtheorem{Prop}[Thm]{Proposition}
\newtheorem{Open}[Thm]{Open Problem}
\newtheorem{Ex}[Thm]{Example}
\newtheorem{Quest}{Question}
\theoremstyle{definition}
\newtheorem{Def}[Thm]{Definition}
\newtheorem{Rk}[Thm]{Remark}
\theoremstyle{remark}
\newtheorem*{Thm*}{Theorem}
\newtheorem*{Lem*}{Lemma}
\newtheorem*{Conj*}{Conjecture}
\newtheorem*{Cor*}{Corollary}
\newtheorem*{Def*}{Definition}
\newtheorem*{Prop*}{Proposition}
\newtheorem*{Exo*}{Exercise}
\newtheorem*{Exs*}{Examples}
\newtheorem*{Ex*}{Example}
\newtheorem*{Rk*}{Remark}
\newtheorem*{Rks*}{Remarks}
\def\Ack{\medskip\noindent { {\bf Acknowledgements: } E.C., M.C.C. and M.L. thank the ENS Lyon for hospitality
on a visit during which this work was begun and substantially developed. E.C. and M.L. thank C.M.A.F. and the University of Lisbon for hospitality on a visit during which additional work on this paper was undertaken. The work of E.C. and M.L. was partially supported U.S. National Science Foundation
grant DMS 06-00037. The work of M.C.C. was partially supported by  POCI/MAT/61931/2004. 
}\ \ignorespaces}
\def\signec{\bigskip  \begin{center} {\sc Eric Carlen\par\vspace{3mm}
Department of Mathematics, Hill Center\par
Rutgers University\par
Piscataway, NJ  08854,
U.S.A.\par\vspace{3mm}
e-mail:} \tt{carlen@math.rutgers.edu} \end{center}}
\def\signmcc{\bigskip \begin{center} {\sc Maria Carvalho\par\vspace{3mm}
Department of Mathematics and CMAF\par
 University of Lisbon\par
1649-003 Lisbon,
PORTUGAL\par\vspace{3mm}
e-mail:} \tt{mcarvalh@cii.fc.ul.pt} \end{center}}
\def\signjl{\bigskip \begin{center} {\sc Jonathan Le Roux\par\vspace{3mm}
Department of Information Physics and Computing\par
Graduate School of Information Science and Technology\par
The University of Tokyo\par
7-3-1, Hongo, Bunkyo-ku, Tokyo 113-8656, JAPAN\par\vspace{3mm}
e-mail:} \tt{leroux@hil.t.u-tokyo.ac.jp}\end{center}}
\def\signml{\bigskip  \begin{center}{\sc Michael Loss\par\vspace{3mm} 
School of Mathematics\par
Georgia Institute of Technology\par
Atlanta GA 30332,
U.S.A.\par\vspace{3mm}
e-mail:} \tt{loss@math.gatech.edu} \end{center}}
\def\signcv{\bigskip \begin{center} {\sc C\'edric Villani\par\vspace{3mm}
UMPA, ENS Lyon\par
46 all\'ee d'Italie\par
69364 Lyon Cedex 07,
FRANCE\par\vspace{3mm}
e-mail:} \tt{cvillani@umpa.ens-lyon.fr} \end{center}}
\begin{document}

\title[From microscopic to macroscopic entropy]
{Entropy and Chaos in the Kac Model}

\author{E.A. Carlen}
\author{M.C. Carvalho}
\author{J. Le Roux}
\author{M. Loss}
\author{C. Villani}

\begin{abstract}
We investigate the behavior  in $N$  of the $N$--particle entropy functional for Kac's stochastic 
model of Boltzmann dynamics, and its relation to the entropy function for 
solutions of Kac's one dimensional nonlinear model Boltzmann equation.  We prove a number of
results that bring together the notion of propagation of chaos, which Kac introduced in the context of this model, with the problem of estimating the rate of equilibration in the model in entropic terms, and obtain
a bound showing that the entropic rate of convergence can be arbitrarily slow. Results proved here
show that one can in fact use entropy production bounds in Kac's stochastic model to obtain entropic convergence bounds for his non linear model Boltzmann equation, though the problem of obtaining optimal lower bounds of this sort for the original Kac model remains open, and the upper bounds
obtained here show that this problem is somewhat subtle.

\end{abstract}

\maketitle

\begin{center} {\bf August 22, 2008} \end{center}

\tableofcontents

\section{Introduction}

\subsection{The origins of the problem to be considered}

In a remarkable paper \cite{Kac} of 1956, Mark Kac investigated the probabilistic foundations of kinetic
theory, and defined the notion of {\bf propagation of chaos},
which has since then developed into an active field of probability.

Kac introduced the concept of propagation of chaos in connection with a
specific stochastic process modeling binary collisions in a gas made of
a large number $N$ of identical molecules, and he was particularly concerned with its rate of equilibration; i.e., of its approach to stationarity.

While his ideas concerning propagation of chaos had an immediate resonance and impact, 
this was not the case with the issues he raised concerning rates of equilibration. These had to wait much longer for progress and development, as we shall relate below. In this paper, we bring these two lines of investigation back together, proving several theorems relating chaos and equilibration for the the Kac walk.  

\subsection{The Kac walk}

We begin with a precise description of the {\bf Kac walk}
as a model for the evolution of the distribution of velocities in
a gas of like molecules undergoing binary collisions.
For simplicity, Kac assumed
the gas to be spatially homogeneous, and the velocities $v_j$
($1\leq j\leq N$) to be one-dimensional.
The latter assumption is incompatible with the  conservation of both
momentum and kinetic energy, so Kac only assumed conservation of
the kinetic energy $E$, where
\[ E = \frac m 2 \sum_{j=1}^N v_j^2,\]
with $m$ denoting the mass of the particle species, and $v_j$ denoting the velocity of the 
$j$th particle.

The natural state space for this system  (i.e., state space for the walk) is the sphere
$\Sn(\sqrt{(2/m)E})\subset\R^N$, the $(N-1)$-dimensional sphere with
radius $\sqrt{(2/m)E}$. For the sequel of the discussion, let us choose units in which the mass
of each particle is 2. Then the
total value of the kinetic energy is $N$, so that  the state space is $\nSn$, and each particle
has unit mean kinetic energy. 
 Let $V = (v_1,\dots,v_N)$ denote a generic point in 
 $\nSn$.

Here is how to  take a step of the Kac walk: First, randomly
pick a  pair $(i,j)$ of distinct indices in $\{1,\dots,n\}$ uniformly from among all such pairs.
The molecules $i$ and $j$  are the molecules that will ``collide''. Second,
pick a random  angle
$\theta$ uniformly from  $[0,2\pi)$. Then update $V = (v_1,\dots,v_N)$ by leaving $v_k$
unchanged for $k\ne i,j$, and updating velocities $v_i$ and $v_j$
by rotating in the $v_i,v_j$ plane as follows:
\[ (v_i,v_j)\quad \rightarrow \quad \Bigl((\cos\theta)v_i - (\sin\theta)v_j ,
\:(\sin\theta)v_i + (\cos\theta)v_j\Bigr).\]
Let $R_{i,j,\theta}V$ denote the new point in  $\nSn$ obtained in this way.
This process, repeated again and again, is the {\bf Kac walk} on
$\nSn$.

Associated to the steps of this walk is the Markov transition operator $Q_N$ on 
$L^2(\nSn,\d \sigma^N)$ where $\sigma^N$ is the uniform probability measure on 
$\nSn$. (This notation shall be used throughout the paper.)

If $V_j$ denotes  the position after the $j$th step of the walk, and $\varphi$ is any continuous function
on  $\nSn$, the transition operator $Q_N$ is defined by
$$Q_N\varphi(V) = {\mathbb E}\big\{\phi(V_{j+1}) \ |\ V_j = V\ \big\}\ .$$

From the description provided above, one finds that
$$Q_N\varphi(V) = \left(
\begin{array}{c}
N\\ 2\end{array} \right)^{-1}\sum_{i<j}\ \frac{1}{2\pi}\int_{[0,2\pi)}\varphi(R_{i,j,\theta}V)\d \theta\ .$$
It is easily seen that $\sigma^N$ is the unique invariant measure.

A closer match with the physics being modeled is attained if the steps of the walk arrive 
not in a metronome beat, but in
a Poisson stream with the mean wait between steps being $1/N$.  This ``Poissonification''
of the Kac walk yields a continuous time process on  $\nSn$. Since $Q_N$ is self adjoint on
$L^2(\nSn,\d \sigma^N)$, this process is reversible, and so if the law $\mu_0$ of the initial state
$V_0$ has a density $F^N_0$ with respect to $\sigma^N$, then for all $t>0$, the law
$\mu_t$ of 
$V_t$ has a density $F^N_t$ with respect to $\sigma^N$, and $F^N_t$ is the solution to
the Cauchy problem
\begeq\label{master} 
\frac{\partial}{\partial t}F^N = L_N F^N \qquad{\rm with}\qquad \lim_{t\to 0}F_t^N = F_0^N
\endeq
where $L_N = N(Q_N-I)$, and $I$ is the identity operator.
This equation is known as the {\bf Kac master equation}, which is nothing other than the Kolmogorov forward equation for the continuous time Kac walk. The solution is of course given by
\begeq\label{mastersol} 
F_t = e^{L_N}F_0\ .
\end{equation}

Since $V \to R_{i,j,\theta}$ is a rotation, it follows that
for each positive integer $k$, $Q_N$ preserves the subspace of 
$L^2(\nSn,\d \sigma^N)$ consisting of spherical harmonics of degree no greater than $k$.
Hence, all of the eigenfunctions of $Q_N$ are spherical harmonics. Since the constant is the
only spherical harmonic
that is invariant under  rotations, $1$ is an eigenvalue of
$Q$ of multiplicity one. 

Therefore, for any initial data $F_0$ in $L^2(\nSn,\d \sigma^N)$, the solution 
$F^N_t = e^{L_N}F^N_0$ of the
Kac master  equation satisfies
\begeq\label{masterlim} 
\lim_{t\to\infty}F^N_t = 1\ .
\end{equation}
We refer to the invariant density $1$ as the {\em equilibrium}, and the process of approaching this limit as {\em equilibration}.

The rate at which this limit is achieved is physically interesting for reasons that will be explained shortly.
But apart from its physical motivation, the problem is quite interesting on purely probabilistic grounds:
While the subject of quantifying the rate of equilibration for random walks on large discrete sets
has been vigorously developed in recent years, much less has been done in the case of continuous state spaces  of high dimension, and the Kac walk is a very natural example. 

Kac proposed to investigate the rate of equilibration for his walk in $L^2$ terms through the {\em spectral gap} of $L_N$: Define
$$\Delta_N = \sup\big\{ -\langle \varphi, L_N \varphi\rangle\ : \ \langle \varphi, 1\rangle =0\quad{\rm and}\quad \langle \varphi, \varphi\rangle = 1\ \big\}$$
where the inner products are taken in 
$L^2(\nSn,\d \sigma^N)$. 
In his paper \cite{Kac}, Kac conjectured that 
$\liminf_{N\to\infty} \Delta_N > 0$.

Since one already knows that the eigenfunctions of $L_N$ are spherical harmonics, this may seem
like a trivial problem. In fact, it is very easy to guess the exact value for $\Delta_N$ and the corresponding eigenfunction. Indeed, it is natural to suppose that the eigenfunction must be a simple symmetric, even polynomial in the velocities $v_j$. The simplest such thing,
$\sum_{j=1}^Nv_j^2$, is simply a constant on $\nSn$, so one might try
$$\varphi_{\rm gap} = \sum_{j=1}^N(v_j^4 -\langle v_j^4,1\rangle)\ .$$
(The constant being subtracted to ensure orthogonality to $1$ can be easily computed; see
\cite{CCL:spectralgap:01}, and this is indeed a spherical harmonic.)  Physical reasoning, based on linearizing the Boltzmann-Kac equation to be discussed shortly,  gives further evidence that $\varphi_{\rm gap}$
should in fact be the gap eigenfunction. Using this as a trial function, one readily computes
what should be --- and does turn out to be ---  the value of $\Delta_N$:
\begeq\label{deltaval} 
\Delta_N = \frac{1}{2}\frac{N+2}{N-1}\ .
\end{equation}

However, while one can explicitly compute as many eigenvalues as one wants to,
there is no monotonicity argument to rule out the  proposition that the gap eigenvalue might come from a spherical harmonic of  large degree.

Kac's conjecture that  $\liminf_{N\to\infty} \Delta_N > 0$ was first proved by Janvresse 
~\cite{janvr:kac:01}. Her method did not yield the exact value for $\Delta_N$. 
The first proof that (\ref{deltaval}) is actually correct was given in
\cite{CCL:spectralgap:01}; see also \cite{maslen:kac} for a different approach.
For a treatment of related problems, including 
physical three-dimensional momentum preserving collisions, 
see~\cite{CCL2} and \cite{CGL}.

These results enable us to quantify  (\ref{masterlim}) as follows:
$$\|F^N_t - 1\|_{L^2(\nSn,\d \sigma^N)} \le e^{-t/2}\|F^N_0 -1\|_{L^2(\nSn,\d \sigma^N)}$$
for all $N$ and $t$. While the exponent is uniform in $N$, the shortcoming of this result 
will be familiar to many probabilists who have worked on rates of equilibration: {\em For
natural sequences of initial data $\{F_0^N\}_{N \in \N}$, it will be the case that
$$\|F_0^N\|_{L^2(\nSn,\d \sigma^N)} \ge C^N$$
for some $C>1$.} Therefore, one still has to wait a {\em time proportional to N} before the bound
starts providing evidence of equilibration.

Even worse, the badly behaved sequences of initial data mentioned above are exactly the ones of primary physical interest --- the {\em chaotic} sequences, in which for large $N$ the coordinate
functions $v_j$ are ``nearly independent and identically distributed'' under the law $ \mu^{(N)}= F_0^N\sigma^N$.

\subsection{Kac's notion of chaos}

To state the precise definition, we first introduce some notation that will be used throughout the paper:
Given any probability measure $\mu^{(N)}$ on  $\nSn$,
and any positive integer $k<N$, let $P_k(\mu^{(N)})$ denote the marginal
measure of $\mu^{(N)}$ for the first $k$ velocities. 
In formulas: whenever $A$ is a Borel subset of $\R^k$,
\[ P_k (\mu^{(n)}) [A] = \mu^{(N)} [ \{(v_1,\dots,v_k)\in A\}].\]
In the sequel, we only consider symmetric measures, so there is nothing
particular in considering the {\em first} $k$ velocities.
Chaos means that $P_k\mu^{(N)}$ is well approximated by $\mu^{\otimes k}$,
a distribution of $k$ {\em independent} particles when $N$ is large.
Here is a more precise definition:

\begin{Def}[chaos] \label{defchaos} 
Let $\mu$ be a given Borel probability measure on $\R$. For each positive
integer $N$, let  $\mu^{(N)}$ be a probability measure on $\nSn$. 
Then the sequence of probability measures $\{\mu^{(N)}\}_{N\in \N}$
is said to be {\bf $\mu$-chaotic} in case:
 
 \smallskip
 \noindent{\it (i)} Each $\mu^{(N)}$ is symmetric under interchange of
the variables $v_1,\dots,v_N$;
 
  \smallskip
 \noindent{\it (ii)} For each fixed positive integer $k$, 
the marginal $P_k\mu^{(N)}$ of $\mu^{(N)}$ (marginal on the first $k$
velocities) converges to the $k$-fold tensor product $\mu^{\otimes k}$,
as $N\to\infty$, in the sense of weak convergence against bounded continuous
test functions.  That is, 
whenever $\chi(v_1,\ldots, v_k)$ is a bounded continuous test
function of $k$ variables, then
\begeq\label{cvgcek} 
\int \chi (v_1,\cdots, v_K)\, d\mu^{(N)}(v_1,\cdots, v_N)
\xrightarrow[N\to\infty]{}
\int \chi(v_1,\cdots, v_k)\, d\mu(v_1)\,\cdots\, d\mu(v_K).
\endeq

\end{Def}

Property {\it (ii)} says that $\mu^{(N)}$ is well approximated by
$(P_1\mu^{(N)})^{\on}$ as $N\to\infty$, in the weak sense of convergence
against test functions depending on a finite number of variables.

Besides being archetypal, the following well-known example
will play an important role in this paper. It has quite an ancient history, going back  ---at least ---to Mehler \cite{mehler}
in 1866. For a more recent reference, see \cite{sznit:chaos:91}

\begin{Ex}\label{uniform}
Let
\begeq\label{gammadef}
\gamma(v) = \frac{e^{-v^2/2}}{\sqrt{2\pi}}.
\end{equation}
Then, $\{\sigma^N\}_{N\in \N}$ is $\gamma(v)\d v$ chaotic.
Indeed, this follows easily from the explicit computation
\begeq\label{expligamma}
P_k\sigma^N = \left(1 - \frac{s^2}{N}\right)^{\frac{N-k-2}{2}}\frac{|S^{N-k-1}|}{N^{k/2}|S^{N-1}|}
 \leb_k\ , \quad{\rm where}\quad 
 |\Sn| = \frac{2\pi^{N/2}}{\Gamma(\frac{N}2)}\ ,
\endeq
and where  $\leb_k$ the $k$-dimensional Lebesgue measure. 
\end{Ex}

Now, let $f$ be some probability density on $\R$, and (with the same
notation as in the above example) suppose that $\{F^N\sigma^N\}_{N\in \N}$
is an $f(v)\d v$ chaotic family. For each $N$, let $F^N(t,\cdot)$ denote
the solution of~\eqref{master} at time $t$, starting from the initial
data $F^N$. The main result that Kac did prove in \cite{Kac} is that for each $t>0$,
$\{F^N(t,\cdot)\sigma^N\}_{N\in \N}$ is still a chaotic family;
this property is referred to as {\bf propagation of chaos}.
indeed, $\{F^N(t,\cdot)\sigma^N\}_{N\in \N}$ is $f(t,v)\d v$ chaotic, where
$f(t,v)$ is  the solution of the following Cauchy problem:
\begeq\label{kaceq}
\begin{cases}
f(0,\cdot)=f;\\ \\
\displaystyle{\derpar{f}t (t,v) = 
\frac{1}{2\pi}\int_{-\pi}^\pi\int_\R \Bigl[ f(v',t)\,f(v'_*,t)\ 
-\ f(v,t)\,f(v_*,t)\Bigr]\d v_*\,\d \theta}\ ,
\end{cases}\endeq
and
$$v' = (\cos\theta)\,v \:-\: (\sin\theta)\,v_*;\quad
v'_* = (\sin\theta)\,v \:+\: (\cos\theta)\,v_*\ .$$
This nonlinear equation is a model Boltzmann equation, which we shall
call the {\bf Boltzmann-Kac equation} (as opposed to the Kac master equation). 
The quadratic nonlinearity on the right is a reflection of the fact that
$Q_N$ models a binary collision process, and of Kac's notion of chaos:
Indeed, the time derivative of $P_1(e^{tL_N }F^N)$ may be expressed
in terms of a linear operation on $P_2F^N$, and then this is well approximated
by the tensor product $f\otimes f$ in the limit $N\to\infty$.

The program Kac set forth in \cite{Kac} was to investigate the behavior of solutions of 
(\ref{kaceq}) in terms of
the behavior of solutions of the the Kac master equation (\ref{master}). In particular,
concerning equilibration, 
$$\lim_{t\to\infty} F^N_t =1 \Rightarrow \lim_{t\to\infty}P_1(F^N_t\sigma^N) = \frac{|S^N-2|}{N^{1/2}|S^{N-1}|}\left(1 - \frac{v_1^2}{N}\right)^{(N-3)/2} \approx \gamma(v_1)$$
for large $N$, and thus Kac's theorem can be used to relate the rate of equilibration in the Kac master equation to the rate of convergence in
$$\lim_{t\to\infty}f(v,t) = \gamma(v)$$
for solutions $f(v,t)$ of  (\ref{kaceq}).
Once this would be carried out, one would then like to do the same for the actual Boltzmann equation
for three dimensional velocities with conservation of both energy and momentum. 

As we have indicated, an $L^2$ analysis of the rate of equilibration for solutions of the Kac master equation does not shed much light on the large time behavior of solutions of  (\ref{kaceq}).
What {\em would} do this is very natural in the context of the Boltzmann equation: {\em an entropy
production estimate}.

\subsection{Convergence to equilibrium and entropy inequalities}

If $\mu$ and $\nu$ are two probability measures on a measurable space
$\X$, their relative entropy is defined by the
formula
\[ H(\mu|\nu) = \int h\log h\,d\nu\qquad h=\frac{\d\mu}{\d\nu},\]
with the understanding that $H(\mu|\nu)=+\infty$ if $\mu$ is not
absolutely continuous with respect to $\nu$.
In particular,
\sm

- if $f$ is a probability density on $\R$, then its relative entropy
with respect to $\gamma$ (identified with a probability measure) is
\[ H(f|\gamma) = \int_\R f(v) \log\frac{f(v)}{\gamma(v)}\d v;\]
\sm

- if $F^N$ is a probability density on $\nSn$, then its relative entropy
with respect to the uniform probability measure $\sigma^N$ is
\[ H_N (F^N) := H(F^N\sigma^N|\sigma^N) = 
\int_\nSn F^N (v) \log F^N(v)\,d\sigma^N(v).\]
\sm

The well-known  Csiszar-Kullback-Leibler-Pinsker inequality states that 
\begeq\label{CKP} H(\mu|\nu) \geq \|\mu-\nu\|_{TV}^2/2,
\endeq
where the subscript ``$TV$'' stands for the total variation norm.

So the relative entropy measures a deviation from equilibrium,
just like the $L^2$ norm, and it is natural to try to quantify the rate of equilibration
for the Kac master equation by studying $H_N(F^N_t)$ for solutions:
If $F_t^N$ is a solution,
$$\frac{{\rm d}}{{\rm d}t}H_N(F^N_t) = \int_{\nSn}\log (F_t^N) L_N   F_t^N\d \sigma^N
= \langle  \log(F_t^N) , L_N F_t^N\rangle\ .$$
In analogy with the definition of the spectral gap $\Delta_N$, define the {\em 
entropy production constant} $\Gamma_N$ by
$$\Gamma_N = \inf\frac{- \langle  \log(F^N) , L_N F^N\rangle} {H_N(F^N)}$$
where the infimum is taken over all probability densities $F^N$ on $\nSn$ with 
$H_N(F^N)< \infty$. 

The entropic analog of the Kac conjecture would be that 
there exists a $c>0$ with $\Gamma_N \ge c$ for all $N$. 
This would imply that
\begin{equation}\label{entdec}
H_N(F_t^N) \le e^{-ct}H_N(F_0^N)\ ,
\end{equation}
and hence that
$$\|F_t^N\sigma^N - \sigma^N\|^2_{TV} \le  2e^{-ct}H_N(F_0^N)\ .$$

There is an absolutely crucial difference between this and the $L^2$ 
estimate that we obtained earlier, and it lies in the {\em extensivity of the entropy}.
Suppose that $\{F_0^N\}_{N\in \N}$ is an $f_0(v)\d v$--chaotic family of densities on $\nSn$.
Then according to Kac's theorem,   $\{F_t^N\}_{N\in \N}$ is an $f(v,t)\d v$
--chaotic family of densities on $\nSn$, where $f(v,t)$ is the solution of the Boltzmann--Kac equation with initial data $f_0(v)$. Because of the near product structure of $F_N^t$, 
one might expect that for each $t$, and large $N$,
\begin{equation}\label{extens}
H_N(F_t^N) \approx NH(f(t,\cdot)\d v | \gamma \d v)\ .
\end{equation}
It is the proportionality to $N$ that we refer to as extensivity. Since this factor of $N$ would appear on both sides of (\ref{entdec}) if we substituted (\ref{extens}) in on both sides, we can cancel off the $N$,
and obtain, in the large $N$ limit
$$H(f(t,\cdot)\d v | \gamma \d v)  \le e^{-ct}H(f_0\d v | \gamma \d v)\ .$$
We could  now apply (\ref{CKP}) to this and conclude that for solutions $f(v,t)$ of
the Boltzmann--Kac equation, 
$$\|f(\cdot,t)\d v - \gamma\d v\|^2_{TV} \le 2e^{-ct}H(f_0\d v | \gamma \d v)\ .$$
Such a bound would be very desirable to have for the Boltzmann--Kac equation, and this motivates the enquiry into  the exact behavior of the entropy production constant $\Gamma_N$.

It turns out that estimating the entropy production constant $\Gamma_N$ is a much more subtle
problem than that of estimating the spectral gap $\Delta_N$.  Unfortunately, the best information that is known at present is
$$\Gamma_N \ge \frac{2}{N-1}\ .$$
There are two different proofs of this result. The first, due to Villani, can be found under Theorem 6.1
in \cite{vill:cer:03}. (The bound $2/(N-1)$ is what one gets from the argument in   \cite{vill:cer:03}
making some simplifications that are admissible in the special case of the original Kac model considered here.)
The second, due to Carlen and Loss, is an entropic 
adaptation of the argument used in \cite{CCL:spectralgap:01} to  determine the spectral gap. It can be found under Lemma 2.4 in
\cite{car} using Theorem 2.5 there.   It was conjectured in  \cite{vill:cer:03} that these bounds are essentially sharp; i.e., that
$$\Gamma_N = {\cal O}\left(\frac{1}{N}\right)\ .$$
However, this is not so clear at present. In fact, it had remained an open problem whether there was even a sequence $\{F^N\}$ of  densities for which
\begin{equation}\label{bad}
\lim_{N\to\infty}\frac{- \langle  \log(F^N) , L_N F^N\rangle} {H_N(F^N)} = 0
\end{equation}
with convergence at any rate at all.  The following theorem settles this issue:

\begin{Thm}\label{thmslow}
For each $c>0$, there is a probability density $f$ on $\R$ with $\int_{\R} vf(v)\d v = 0$
and $\int_{\R} v^2f(v)\d v = 1$, and an $f\d v$--chaotic family $\{F^N\sigma\}_{N\in \N}$
such that
$$\limsup_{N\to\infty}\frac{- \langle  \log(F^N) , L_N F^N\rangle} {H_N(F^N)} \le c\ .$$
For each $c$, the density $f$ is smooth and bounded, and has moments of all orders.
\end{Thm}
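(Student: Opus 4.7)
The plan is to realize the chaotic family as the $N$-fold product measure $f^{\on}$ conditioned on $\nSn$, and to show that, as $N\to\infty$, both the Kac entropy $H_N(F^N)$ and the Kac entropy production $-\langle \log F^N, L_N F^N\rangle$ are asymptotic to $N$ times their one-particle Boltzmann--Kac analogues. The limit of the ratio is then an entropy-production ratio for the Boltzmann--Kac equation, which can be made arbitrarily small by exploiting the known failure of Cercignani's conjecture for Maxwellian-type collisional models. Fix $c>0$ and introduce the Boltzmann--Kac entropy production per pair,
\[
D(f) \,:=\, \int\!\!\int f(v_1)f(v_2)\Bigl[\log f(v_1)f(v_2) \,-\, \tfrac{1}{2\pi}\!\int_0^{2\pi}\!\bigl[\log f(v_1')+\log f(v_2')\bigr]\,\d \theta\Bigr]\,\d v_1\,\d v_2,
\]
where $(v_1',v_2')$ is the Kac-rotated pair. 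By standard counterexamples (bimodal mixtures $f = (1-\eps)\gamma_{\sigma_0} + \eps\, g_a$ with $g_a$ smooth and concentrated far from the origin, the parameters tuned so that $\int vf=0$ and $\int v^2 f=1$), one can choose $f$ smooth, bounded, with moments of all orders, and such that $D(f)/H(f|\gamma) \le c/2$.

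\textbf{Definition of $F^N$ and entropy.} Set $F^N(V) := C_N \prod_{i=1}^N f(v_i)$ on $\nSn$, with $C_N$ chosen so that $F^N$ is a probability density with respect to $\sigma^N$; this is the density of $f^{\on}$ conditioned on $\nSn$. A local CLT argument for $f^{\on}$ at radius $\sqrt{N}$, combined with the Mehler asymptotics of Example~\ref{uniform}, shows that $\{F^N\sigma^N\}_{N\in\N}$ is $f\,\d v$-chaotic. Since $\log F^N = \log C_N + \sum_i \log f(v_i)$, permutation symmetry and local CLT control of $\log C_N$ yield
\[
H_N(F^N) \,=\, N\,H(f|\gamma) \,+\, o(N).
\]

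\textbf{Entropy production.} By self-adjointness of $Q_N$,
\[
-\langle\log F^N, L_N F^N\rangle \,=\, N\!\int F^N\bigl[\log F^N - Q_N\log F^N\bigr]\,\d \sigma^N.
\]
The constant $\log C_N$ is $Q_N$-invariant and drops out; using that $R_{ij\theta}$ acts only on coordinates $i,j$, a short computation gives
\[
\sum_i \log f(v_i) \,-\, Q_N\sum_i \log f(v_i) \,=\, \binom{N}{2}^{-1}\!\sum_{i<j}\bigl[\log f(v_i)f(v_j) - h_f(v_i,v_j)\bigr],
\]
where $h_f(v_i,v_j) := \tfrac{1}{2\pi}\int[\log f(v_i')+\log f(v_j')]\,\d \theta$. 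Permutation symmetry of $F^N$ collapses the $\binom{N}{2}$-term sum to a single pair, and chaoticity $P_2 F^N \to f\otimes f$ yields $-\langle \log F^N, L_N F^N\rangle = N\,D(f) + o(N)$. Dividing,
\[
\limsup_{N\to\infty}\frac{-\langle \log F^N, L_N F^N\rangle}{H_N(F^N)} \,=\, \frac{D(f)}{H(f|\gamma)} \,\le\, \frac{c}{2} \,<\, c.
\]

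\textbf{Main obstacle.} The most delicate point is passing to the limit in the entropy production: the integrand $\log f(v_1)f(v_2) - h_f(v_1,v_2)$ is unbounded, so weak chaoticity of $P_2 F^N \to f\otimes f$ does not suffice. One needs uniform integrability of $\log f(v_i)$ against $P_2 F^N$, which in turn requires a quantitative (indeed entropic) form of chaoticity for the conditioned product. This is precisely what forces the smoothness and all-moments assumptions on $f$ and what makes the local CLT estimates essential; an analogous but milder issue arises in controlling $\log C_N$ for the entropy asymptotics.
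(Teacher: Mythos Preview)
Your proposal is correct and follows essentially the same route as the paper: take $F^N$ to be the conditioned tensor product $[f^{\on}]_{\nSn}$, use the entropic chaos machinery (local CLT asymptotics for $Z_N$) to get $H_N(F^N)/N \to H(f|\gamma)$, expand $\log F^N = \log C_N + \sum_i \log f(v_i)$ in the entropy production, use symmetry to reduce to a single pair, and pass to the limit using convergence of the two-particle marginal. You have also correctly identified the only genuine obstacle, namely that $\log f$ is unbounded so more than weak marginal convergence is needed.

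Two points of comparison are worth flagging. First, the paper's counterexample is not a mixture with a bump ``concentrated far from the origin'' but rather the superposition $(1-\delta)M_a + \delta M_b$ of two \emph{centered} Maxwellians with very different variances, $a = 1/(2(1-\delta))$ and $b = 1/(2\delta)$; each component carries half the energy, and a short explicit computation gives $H(f|\gamma)\to(\ln 2)/2$ while $D(f)=O(\delta\log(1/\delta))$. Your description is vaguer and would require its own verification. Second, the paper does not invoke a general uniform-integrability argument for the limit passage in the production term; instead it exploits that $f$ is a finite sum of Gaussians, so $|\log f(v)|\le C(1+v^2)$ and the full integrand has Gaussian decay, which makes the limit elementary once one has the asymptotics of $Z_{N-2}(f,\sqrt{N-s^2})/Z_N(f,\sqrt{N})$ from the local CLT. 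This is a simpler path than the entropic-marginal route you sketch, though yours would work as well and would generalize beyond Gaussian mixtures.
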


Once one has this, an easy diagonal argument produces a sequence $\{F^N\}_{N\in \N}$
satisfying (\ref{bad}).

While this would seem to be bad news for Kac's program, it only shows
that one cannot have a {\em universal} bound on the ratio defining $\Gamma_N$, valid for 
{\em all} probability densities $F_N$ on $\nSn$. Theorem  \ref{thmslow} does not rule out the
possibility that there is a {\em conditional} bound on this ratio, holding for all $F^N$ in an $f$--chaotic family with some condition on $f$. 

Indeed, we shall see that the densities $f$ used to prove Theorem \ref{thmslow} have a fourth
moment that diverges as $c$ tends to zero. As far as we now know, Theorem  \ref{thmslow}
might become false under the additional assumption of a fixed bound on the fourth moment of $f$.
This would be very interesting since bounds on the fourth moments are well known to be preserved
by solutions of the Boltzmann--Kac equation, so such a condition on the initial data would propagate.

Moreover, it is known \cite{CL} that even for smooth initial data $f_0$ with 
$\int_{\R} vf(v)\d v = 0$ and $\int_{\R} v^2f(v)\d v = 1$, 
solutions $f(v,t)$ of the Boltzmann--Kac equation can have $\|f(\cdot,t) - \gamma\|_{L^1(\R)}$
approach zero arbitrarily slowly -- for example, like 
$$\frac{1} {1+\log(1+\log(1+ \log(1+t)))}\ ,$$
 or the same thing with as many logarithms as one might wish.  This however can  happen only when  the density $f$
has very long tails so that $\int_{\R} v^2f(v)\d v$ just barely converges. A bound on the fourth moment, which would ensure good behavior of the tails is therefore a plausible condition to impose if one seeks a lower bound on the rate of convergence. 

Finally, if one modifies the Kac walk so that pairs of molecules $i,j$ with high values of
$v_i^2+v_j^2$ run {\em much} faster, then one can prove a uniform positive lower bound on 
$\Gamma_N$; see
~\cite[Section~6]{vill:cer:03}. Thus, Theorem \ref{thmslow}  displays the subtleties that beset   Kac's program, but it does not by any means terminate it.  In fact it raises a very interesting question: 
What sort of conditional bound on $\Gamma_N$ might hold for the Kac model?   But we shall not come to that in this paper; there are more basic issues to be settled first.

\subsection{Conditioned tensor products}

The proof of  Theorem \ref{thmslow} naturally requires the {\em construction of chaotic data}, and this raises the following question:

\begin{Quest}\label{q2}
Let $f$ be a probability density on $\R$
with
\begeq\label{meanvar}
\int_\R vf(v)\,{\rm d}v = 0\qquad,\qquad \int_\R v^2 f(v)\,{\rm d}v = 1\ ,
\endeq
 and finite entropy. Is it true that there
is an $f(v)\d v$--chaotic family of densities $\{F^N\}_{N\in\N}$ on $\nSn$?
\end{Quest}

Question~\ref{q2} may seem trivial at first sight, and actually was
treated by Kac in a rather cavalier fashion. Indeed, there is an obvious
procedure for generating chaotic initial data, which may be described
as follows. 

Suppose that $\mu(\d v)$
is a probability measure on $\R$ satisfying (\ref{meanvar}). 
Consider the tensor product measure $\mu^\on$ and {\em condition} (restrict) 
it to the sphere $\nSn$. By the law of large numbers, 
$\sum_{j=1}^N v_j^2 \approx N$ for large $N$, 
almost surely with respect to $\mu^\on$, so this measure is
roughly concentrated on $\nSn$, and the conditioning should not
modify it too much. 

An important instance where this is obviously
true is the particular case when $\mu=\gamma$: Then $F^N$ is just the
uniform measure $\sigma^N$, and the explicit formula~\eqref{expligamma}
certainly guarantees that $F^N$ is $\gamma$-chaotic in a very strong sense.

But for more general data, the extent to which $\mu^\on$ is actually
concentrated on $\nSn$ is not so obvious. Assume that $\mu$ has
a density $f$, so $f^\on$ is the density of $\mu^\on$; then the
restriction of $f^\on$ to $\nSn$ (which is a set of zero measure)
might just {\em not} be well-defined under  the 
conditions~\eqref{meanvar} alone. Whether or not this is the case depends 
on the fluctuations of $\sum_{j=1}^N v_j^2$ about $N$; i.e., on how well
$\mu^\on$ is concentrated on $\nSn$, as measured by the variance of $v^2$
with respect to $f(v)\d v$.  
Again, {\em this will be governed by a fourth moment condition}.  

In what follows we shall use the following notation: 
For  a probability density $f(v)$ on $\R$, satisfying
$\int f(v)v^2\d v=1$, 
Let $\Sigma^2$ denote the variance of $v^2$ under $f(v)\d v$:
\[ \Sigma:= \sqrt{\int_\R (v^2-1)^2 f(v)\d v}.\]
Also, define
\[ Z_N(f,r):= \int_{S^{N-1}(r)} f^\on\, \d\sigma^N_r,\]
where $S^{N-1}(r)$ is the sphere of radius $r$ in $\R^N$,
and $\sigma^N_r$ is the uniform probability measure on that sphere.

The technical core of our results lies in the following estimates,
that can be seen as a version of the {\bf local central limit theorem}.

\begin{Thm}[Estimates on a conditioned tensor product]\label{propct}
With the above notation and under assumptions (\ref{meanvar}) and 
\begeq\label{addcond} 
\int_\R v^4f(v)\,\d v \ <+\infty\qquad \int_\R f^p <+\infty
\endeq
for some $p>1$,
\begeq
Z_N(f,r) = \gamma^{(N)}(r)\ \frac{\sqrt{2}}\Sigma\
\frac{\alpha_N(N)}{\alpha_N(r^2)}\; \Bigl(e^{-\frac{(r^2-  N)^2}{2N\Sigma^2}}
+ \var(N,f,r) \Bigr),
\endeq
where
$\gamma^{(N)}(r) $
is the restriction of $\gamma^{\on}$ to $S^{N-1}(r)$,
\[ \alpha_N(u) = u^{\frac{N}2-1}e^{-\frac{u}2},\]
and
$\lim_{N\to\infty} \var(N,f,r) = 0$.
\end{Thm}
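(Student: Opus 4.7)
The plan is to reduce the formula to a uniform local central limit theorem for the iid partial sum $S_N := \sum_{i=1}^N (v_i^2-1)$, which has mean zero, variance $\Sigma^2$, and an absolutely continuous law since $f$ is a density.

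First I would use polar coordinates $V = rU$ with $U$ on the unit sphere to establish the exact disintegration identity
\begin{equation*}
Z_N(f,r) \;=\; \frac{2\,\rho_{S_N}(r^2-N)}{|S^{N-1}|\,r^{N-2}}.
\end{equation*}
This is obtained by computing $\int_{\R^N} f^{\otimes N}(V)\chi(|V|^2)\,dV$ two ways: once as $|S^{N-1}|\int_0^\infty \chi(r^2)\,r^{N-1}\,Z_N(f,r)\,dr$, and once as $\int_0^\infty \chi(u)\,\rho_{S_N}(u-N)\,du$, then equating after the change of variable $u=r^2$.

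Next, I would calibrate the geometric prefactor against $\gamma^{(N)}$ and $\alpha_N$ by applying the same identity to $f=\gamma$. Since $\gamma^{\otimes N}$ is radial, $Z_N(\gamma,r)=\gamma^{(N)}(r)$, and since $\sum v_i^2$ is $\chi^2_N$-distributed for standard Gaussians, $\rho_{S_N^\gamma}(r^2-N) = \alpha_N(r^2)/(2^{N/2}\Gamma(N/2))$ exactly. This identifies
\begin{equation*}
\frac{2}{|S^{N-1}|\,r^{N-2}} \;=\; \gamma^{(N)}(r)\,\frac{2^{N/2}\Gamma(N/2)}{\alpha_N(r^2)},
\end{equation*}
and, substituting back with $C_N := 2^{N/2}\Gamma(N/2)/\alpha_N(N)$, reduces Theorem~\ref{propct} to the single-variable claim
\begin{equation*}
C_N\,\rho_{S_N}(s) \;=\; \frac{\sqrt{2}}{\Sigma}\bigl(e^{-s^2/(2N\Sigma^2)}+\var(N,f,r)\bigr), \qquad s := r^2-N.
\end{equation*}
Stirling's formula gives $C_N = \sqrt{4\pi N}\,(1+o(1))$, so $C_N/\sqrt{2\pi N\Sigma^2}\to \sqrt{2}/\Sigma$, and what remains is exactly the uniform local CLT $\rho_{S_N}(s) = (2\pi N\Sigma^2)^{-1/2}e^{-s^2/(2N\Sigma^2)}+o(N^{-1/2})$.

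This uniform local CLT is the main technical obstacle. The finite fourth moment of $f$ gives $\Sigma<\infty$ and the Taylor control $\log\psi(\xi) = -\Sigma^2\xi^2/2 + O(|\xi|^3)$ near the origin, where $\psi$ is the characteristic function of $v^2-1$. The hypothesis $\int f^p < \infty$ for some $p>1$ supplies the regularity ingredient: the density of $v^2-1$ (obtained from $f$ by a change of variables whose Jacobian singularity is integrable) lies in $L^q$ for some $q>1$, so after finitely many self-convolutions it is square integrable, whence $|\psi|^k\in L^2$ for some $k$ and in particular $\sup_{|\xi|\ge\delta}|\psi(\xi)|<1$ for every $\delta>0$. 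Standard Fourier inversion $\rho_{S_N}(s) = (2\pi)^{-1}\int_\R e^{-i\xi s}\psi(\xi)^N\,d\xi$, split at $|\xi| = \eta\sqrt{N}$, then yields the Gaussian main term from the central piece (via the Taylor expansion) and an exponentially small remainder from the tail (by pulling out $|\psi|^k$ in $L^2$ and using $|\psi|^{N-k}\le (1-c)^{N-k}$ on the relevant range), producing the additive vanishing error $\var(N,f,r)\to 0$.
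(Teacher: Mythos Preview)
Your approach is correct and essentially the same as the paper's: both reduce via the polar disintegration (the paper's Lemma~\ref{lemZn}) to a uniform local central limit theorem for the density of $\sum v_j^2$, check that the density $h$ of $v^2$ inherits an $L^q$ bound ($q>1$) from the $L^p$ bound on $f$, and then run the standard Fourier-inversion argument together with Stirling's formula; your Gaussian calibration of the geometric prefactor is exactly the observation the paper records in the Remark following the proof of Theorem~\ref{thmasymp}.

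One correction worth flagging: with only a fourth moment on $f$ (equivalently, a second moment on $v^2-1$) you obtain $\psi(\xi)=1-\tfrac12\Sigma^2\xi^2+o(|\xi|^2)$, not the $O(|\xi|^3)$ you assert, which would need a sixth moment of $f$. The local CLT argument still goes through with this weaker control (this is precisely what the paper's Proposition~\ref{propFourier}(ii) and Theorem~\ref{LCLT} are set up to handle, via the tail function $\chi$), so your outline is sound, but the Taylor remainder should be stated as $o(|\xi|^2)$.
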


\begin{Rk} It is part of that Proposition that $Z_N(f,r)$ is well-defined,
at least if $N$ is large enough (it remains unchanged under a modification of
$f$ on a zero Lebesgue measure set).
\end{Rk}

\begin{Rk} We shall prove a more precise version of the theorem,
with explicit estimates on $\var(N,f,r)$; they will be useful
to extend the validity of our results to probability
densities which do not necessarily have finite moment of order~4,
or finite $L^p$ norm. Otherwise, it is sufficient to know that
$\var(N,f,r)\to 0$ as $N\to\infty$.

\end{Rk}

The implications of Proposition~\ref{propct} are best understood when
recast in terms of the relative density of $f$ with respect to $\gamma$;
so let
\begeq\label{Z'N} 
Z'_N(f,r):= \int_{S^{N-1}(r)} \left(\frac{f}{\gamma}\right)^\on\, 
\d\sigma^N_r.
\endeq
Then, as a consequence of Proposition~\ref{propct},
\[ Z'_N(f,\sqrt{N}) = \frac{\sqrt{2}}{\Sigma} \bigl(1+o(1)\bigr).\]
\medskip
\noindent{$\bullet$} {\it
In other words, the integral of $f^\on$ on $\nSn$ has a universal
behavior -- depending on $f$ only through $\Sigma$.}
\medskip

Thus, the fourth moment condition in Theorem \ref{propct} is just what is required, in the way of moments,
 for the conditioning to work. What about the $L^p$ condition?

This comes in as follows: As a function of $r$, $Z_N(f,r)$ can be expressed in terms of the
density for $\sum_{j=1}^N V_j^2$, where $\{V_j\}_{j\in \N}$ is a sequence
of independent random variables with law $f(v)\d v$.
By Young's convolution inequality, the $N$--fold convolution power
of a  probability density $g$ is continuous if $g$ lies in $L^{N/(N-1)}$.
Hence the $L^p$ condition in Proposition~\ref{propct}
is natural: It is a simple sufficient condition to ensure that
$Z_N(f,r)$ is a continuous function of $r$ if $N$ is large enough.  
Interestingly enough, though $p$ can be arbitrarily close to $1$, a bound on the entropy is {\em not} enough to ensure this.
This point is discussed further in the appendix where we prove the version of the local central limit 
theorem that we shall use here.

When the conditions of  Theorem \ref{propct} are satisfied, we may condition  the tensor product
$\mu^{\otimes N}$, with $\mu = f\d v$, to obtain a probability measure on $\nSn$:

\begin{Def}[Conditioned product measures] \label{defcpd} 
Given a probability density $f$ on $\R$ satisfying the hypotheses of 
Proposition~\ref{propct}, and $\mu(\d v)=f(v)\d v$,
we define the corresponding family
of {\em conditioned product measures}, denoted
$\{ \ [\mu^{\on}]_{\nSn}\ \}_{N\in \N}$, by
\[[\mu^{\on}]_{\nSn} := 
\frac{\prod_{j=1}^Nf(v_j)}{Z_N(f,\sqrt{N})} \,\sigma^N
= \frac{\prod_{j=1}^N(f(v_j)/\gamma(v_j))}{Z'_N(f,\sqrt{N})} \,\sigma^N\ .\]
\end{Def}

The point of this definition is that, as noted above, one might hope that the family
$\{ \ [\mu^{\on}]_{\nSn}\ \}_{N\in \N}$ would be $\mu$--chaotic. This is indeed the case, and in a very strong sense, as we shall explain in the next subsection.

\subsection{Entropic chaos}

The notion of chaos as originally defined by Kac is well adapted to his original purpose, namely,
establishing a rigorous  connection between the linear Kac master equation on the one hand, and the non linear Boltzmann--Kac equation on the other. However, it is not quite strong enough
to draw conclusions about the entropic
rate of convergence to equilibrium for the Boltzman--Kac equation from an analysis of the entropic rate of convergence for the Kac master equation.  As we have explained above, a rigorous deduction in this direction would depend on having a precise version of the extensivity property (\ref{extens})
for chaotic families.  Thus we ask:

\begin{Quest}\label{q1}
Is there a reformulation of the chaos property in entropic terms
that is sufficiently strong that it can yield a bound on the entropic
rate of convergence to equilibrium for~\eqref{kaceq} when combined with
a bound on the entropic rate of convergence for~\eqref{master}?
\end{Quest}

As we shall see, the answer is positive:

\begin{Def}[Entropic $\mu$-chaos] \label{defschaos} 
Let $\mu$ be a probability measure on $\R$, and, 
for each positive integer $N$, let $\mu^{(N)}$ be a probability measure
on $\nSn$. The sequence $\{\mu^{(N)}\}_{N\in \N}$ is
said to be entropically $\mu$-chaotic in case it satisfies conditions
$(i)-(ii)$ in Definition~\ref{defchaos}, and in addition

\smallskip
\noindent{\it (iii)}  \qquad\qquad\qquad\qquad
${\displaystyle \lim_{N\to \infty} \frac{H(\mu^{(N)}|\sigma^N)}{N}
= H(\mu|\gamma)}$.

\end{Def}

As indicated above, from the physical point of view, condition $(iii)$
can be thought of as expressing {\it asymptotic extensivity}
of the entropy for an entropically  chaotic family; it  provides a
bridge between the entropy of the $N$-particle system and the
entropy of the reduced system. This is reminiscent of a work by Kosygina
on the limit from microscopic to macroscopic entropy in the Ginzburg-Landau
model~\cite{kosy:excl:01}.

Secondly,  entropic chaos really is a stronger notion than plain
chaos;  it involves {\em all}  of variables, not
only  finite-dimensional marginals of fixed size. There is a good analogy with a
work by Ben Arous and Zeitouni~\cite{GBAZ:increasing:99}
(also based on the extensivity properties of entropy).
Their work, just as ours, uses a version of the Central Limit Theorem. 

Finally, once Condition {\em (ii)} is enforced, Condition {\em (iii)} really means
that $\mu^{(N)}$ is ``strongly'' close to $\mu^\on$. To understand this,
think of the following well-known theorem: If $f^{(N)}$ is a family of
probability densities on $\R$, converging weakly to some probability
density $f$ as $N\to\infty$, and $J$ is a strictly convex functional,
then automatically $J(f)\leq \liminf J(f^{(N)})$; but if in addition
$J(f^{(N)})\to J(f)$, then the convergence of $f^{(N)}$ to $f$
actually holds almost in the sense of $L^1$ norm, 
not just weakly. So one could define
a notion of strong convergence by requiring the weak convergence
of $f^{(N)}$, plus the convergence of $J(f^{(N)})$ to $J(f)$.
Such a step has already been taken in the definition of
the ``entropic convergence'' used in the context of (deterministic)
hydrodynamic limits of the Boltzmann equation by Golse and collaborators,
in an impressive series of papers, starting with~\cite{BGL:1:91}
and leading up to~\cite{GSR:NS:04}.

The following theorems provides an answer to both Questions \ref{q2} and \ref{q1}:

\begin{Thm}\label{thmsuffschaos}
Let $f$ be a probability density on $\R$ satisfying
\[ \int f(v) v^2\, \d v=1\qquad \int f(v)v^4\,\d v<+\infty,\qquad
f\in L^\infty(\R),\]
and let $\mu(\d v)=f(v)\d v$. Then $\{[\mu^{\on}]_{\nSn}\}_{N\in \N}$ is entropically $\mu$--chaotic.
In fact, condition (ii) from the definition of chaos holds in the following much stronger sense:
\begin{equation}\label{strong2}
\lim_{N\to\infty} H\bigl(P_k([\mu^{\on}]_{\nSn})\ \big|\ \mu^{\otimes k}\bigr)  = 0\ .
\end{equation}

Furthermore,  let $\{\mu^{(N)}\}_{N\in \N}$ be any family of symmetric probability measures on $\nSn$
such that

\noindent{\it (H)} $\dps \qquad\qquad\qquad
\frac1N H\bigl(\mu^{(N)}\ \big|\ [\mu^{\on}]_{\nSn} \bigr)\xrightarrow[N\to\infty]{}0\ .$
\smallskip
Then $\{\mu^{(N)}\}_{N\in\N}$ is entropically $\mu$-chaotic. 
\end{Thm}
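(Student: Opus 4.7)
The proof naturally splits into two halves, with Part~2 leaning on Part~1.

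\textbf{Part~1 (the conditioned product is entropically chaotic).} The starting point is the identity
\[
H([\mu^{\on}]_{\nSn}\mid \sigma^N) \;=\; N\int\log g\,dP_1[\mu^{\on}]_{\nSn} - \log Z'_N(f,\sqrt N),
\]
obtained by symmetry from the explicit density $d[\mu^{\on}]_{\nSn}/d\sigma^N = \prod_j g(v_j)/Z'_N(f,\sqrt N)$, with $g := f/\gamma$. Theorem~\ref{propct} gives $\log Z'_N(f,\sqrt N)\to \log(\sqrt 2/\Sigma)$, so the last term divided by $N$ is $o(1)$. Slicing $\sigma^N$ via (\ref{expligamma}) yields
$P_1[\mu^{\on}]_{\nSn}(v_1) = g(v_1)\,p_N(v_1)\,Z'_{N-1}(f,\sqrt{N-v_1^2})/Z'_N(f,\sqrt N)$;
Theorem~\ref{propct} together with Mehler's formula produces pointwise convergence of this density to $f(v_1)$, plus the uniform domination $P_1[\mu^{\on}]_{\nSn}(v_1)\le C f(v_1)$ on the whole sphere (the key fact being that $p_N/\gamma$ is uniformly bounded on $|v_1|\le\sqrt N$). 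Since the hypotheses imply $|\log g|\,f \in L^1(\R)$, dominated convergence gives $\int \log g\,dP_1[\mu^{\on}]_{\nSn} \to H(\mu|\gamma)$, proving (iii). For the stronger marginal convergence (\ref{strong2}), an analogous $k$-variable slicing gives the formula
\[
\frac{dP_k[\mu^{\on}]_{\nSn}}{d\mu^{\otimes k}}(v_1,\dots,v_k) \;=\; \frac{Z_{N-k}(f,\sqrt{N-\sum v_j^2})\, p_N^{(k)}(v_1,\dots,v_k)}{Z_N(f,\sqrt N)},
\]
which tends to $1$ pointwise (again by Theorem~\ref{propct} and Mehler) and is uniformly bounded; dominated convergence then yields $H(P_k[\mu^{\on}]_{\nSn}\mid\mu^{\otimes k}) \to 0$.

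\textbf{Part~2 (hypothesis (H) implies entropic chaos).} The chain rule for relative entropy gives the central identity
\[
\frac{1}{N}H(\mu^{(N)}\mid\sigma^N) \;=\; \frac{1}{N}H(\mu^{(N)}\mid [\mu^{\on}]_{\nSn}) + \int\log g\,dP_1\mu^{(N)} - \frac{\log Z'_N(f,\sqrt N)}{N}.
\]
For condition~(ii), given $\chi\in C_b(\R^k)$ I symmetrize to $\bar\chi_N := \binom{N}{k}^{-1}\sum_{|S|=k}\chi\circ\pi_S$ and apply the Donsker--Varadhan variational formula:
\[
\int\bar\chi_N\,d\mu^{(N)} - \int\bar\chi_N\,d[\mu^{\on}]_{\nSn} \;\le\; \inf_{\lambda>0}\frac{1}{\lambda}\Big[H(\mu^{(N)}\mid [\mu^{\on}]_{\nSn}) + \log\int e^{\lambda(\bar\chi_N-\E\bar\chi_N)}\,d[\mu^{\on}]_{\nSn}\Big].
\]
The near-product structure of $[\mu^{\on}]_{\nSn}$ (via Part~1) combined with covariance bookkeeping over the $k$-subsets $S$ (under a true product, covariances vanish for disjoint pairs, and the strong chaos from Part~1 controls the defect) gives $\mathrm{Var}_{[\mu^{\on}]_{\nSn}}(\bar\chi_N)$ of order $k^2\|\chi\|_\infty^2/N$ (up to lower-order corrections). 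The log-MGF is then $O(\lambda^2/N)$; optimizing in $\lambda$ using $H(\mu^{(N)}\mid [\mu^{\on}]_{\nSn}) = o(N)$ from (H) gives a bound of order $\sqrt{o(1)}\to 0$. By symmetry, $\int\bar\chi_N d\mu^{(N)} = \int \chi\, dP_k\mu^{(N)}$, and combining with Part~1 produces $P_k\mu^{(N)}\to\mu^{\otimes k}$ weakly.

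\textbf{Condition (iii) and the main obstacle.} Since the first and third terms on the right of the central identity tend to $0$, it remains to prove $\int\log g\,dP_1\mu^{(N)}\to H(\mu|\gamma)$. The difficulty is that $\log g(v) = \log f(v) + v^2/2 + \mathrm{const}$ is unbounded. The plan is to split the integral at $|v|\le R$ and $|v|>R$: on the bounded region, (ii) plus dominated convergence handles the limit for fixed $R$; for the Gaussian tail $v^2/2$, the energy identity $\int v^2\,dP_1\mu^{(N)}=1$ combined with the weak convergence $P_1\mu^{(N)}\to f\,dv$ and preservation of the second moment yields uniform integrability of $v^2$; for the $\log f$ part, the $L^\infty$ bound on $f$ handles the positive part, while the data-processing bound $H(P_1\mu^{(N)}\mid P_1[\mu^{\on}]_{\nSn}) = o(N)$, together with the uniform majorization $P_1[\mu^{\on}]_{\nSn}\le C f$ from Part~1, is used to control $(\log f)^-$ uniformly in $N$. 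This uniform integrability step for $(\log f)^-$ is the technical heart of Part~2 and the principal obstacle: it forces one to use the full strength of (H) together with the refined estimates of Theorem~\ref{propct}.
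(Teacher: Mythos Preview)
Your Part~1 is essentially the paper's argument: the identity for $H([\mu^{\on}]_{\nSn}|\sigma^N)$ and the dominated-convergence computation of the $k$-marginal density are exactly Theorems~\ref{ai} and~\ref{thmmicromacro} specialized to $\mu^{(N)}=[\mu^{\on}]_{\nSn}$.

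Your Part~2, however, departs from the paper in both substeps, and each departure carries a real gap.

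\textbf{Condition~(ii).} The paper does \emph{not} use a concentration/Donsker--Varadhan argument. Instead (Theorems~\ref{thmuppergen'} and~\ref{thmentrchaos}) it argues by tightness of $P_k\mu^{(N)}$, extracts a subsequential weak limit $\mu_k$, and invokes the Legendre representation of entropy to prove the asymptotic lower semicontinuity
\[
\frac{H(\mu_k|\nu^{\otimes k})}{k}\ \le\ \liminf_{N\to\infty}\frac{H(\mu^{(N)}|[\nu^{\on}]_{\nSn})}{N}=0,
\]
forcing $\mu_k=\nu^{\otimes k}$. Your route requires the log-MGF bound $\log\int e^{\lambda(\bar\chi_N-\E\bar\chi_N)}\,d[\mu^{\on}]_{\nSn}=O(\lambda^2/N)$. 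Under a true product measure this follows from McDiarmid (each coordinate influences $\bar\chi_N$ by $O(k/N)$), but $[\mu^{\on}]_{\nSn}$ is \emph{not} a product measure, and neither a variance bound of order $1/N$ nor ``strong chaos'' of the marginals upgrades automatically to sub-Gaussian concentration with proxy $O(1/N)$. Without that, optimizing over $\lambda$ with the crude Hoeffding bound $O(\lambda^2)$ and $H=o(N)$ only gives $O(\sqrt{H})$, which need not vanish. This step is a genuine hole.

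\textbf{Condition~(iii).} You and the paper agree on the chain-rule identity, but the paper handles $\int\log f\,dP_1\mu^{(N)}$ by a squeeze rather than by uniform integrability. For the upper bound it replaces $\log f$ by the \emph{bounded} function $\log(f+\delta)$ (here $f\in L^\infty$ is used), passes to the limit using condition~(ii), and lets $\delta\to 0$; this yields $\limsup H(\mu^{(N)}|\sigma^N)/N\le H(\mu|\gamma)$. The matching lower bound is again Theorem~\ref{thmuppergen'} with $g=\gamma$. Your plan to control $(\log f)^-$ via the data-processing estimate $H(P_1\mu^{(N)}\mid P_1[\mu^{\on}]_{\nSn})=o(N)$ is too weak: $o(N)$ need not even be bounded, so it gives no tail control on $P_1\mu^{(N)}$. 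The paper gets the needed marginal control differently (Remark~\ref{rkwL8}): it first bounds $H(\mu^{(N)}|\sigma^N)/N$ using $f\in L^\infty$, then applies the spherical subadditivity inequality $H(P_k\mu^{(N)}|P_k\sigma^N)\le 2k\,H(\mu^{(N)}|\sigma^N)/N$ to get a uniform entropy bound on $P_1\mu^{(N)}$, whence Dunford--Pettis and convergence against bounded \emph{measurable} $\log(f+\delta)$.

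In short, the missing idea is Theorem~\ref{thmuppergen'} (the Legendre-duality lower semicontinuity of $H(\cdot|\sigma^N)/N$), which the paper uses twice: once to identify subsequential limits in~(ii), and once as the lower half of the squeeze in~(iii). With that tool both of your difficult steps disappear.
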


Theorem \ref{thmsuffschaos} takes care of Question \ref{q2} for bounded densities $f$
with a finite fourth moment, but certainly we cannot directly employ the conditioned tensor product construction when  $f$ does not have a fourth moment. However, using a diagonal argument, we shall be able to show that there does exist an entropically $f\d v$--chaotic family for all finite energy, finite entropy probability densities
$f$ on $\R$:

\begin{Thm}\label{thmexist}
Let $f$ be a probability density on $\R$ with
\[ \int f(v)v^2\d v =1, \qquad H(f|\gamma)<+\infty.\]
Then there exists an $f(v)\d v$-entropically chaotic sequence.
\end{Thm}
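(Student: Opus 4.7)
The plan is to reduce Theorem \ref{thmexist} to Theorem \ref{thmsuffschaos} by approximation plus diagonal extraction. The hypotheses here are strictly weaker (no fourth moment, no $L^\infty$ bound), so I would first approximate $f$ by a sequence of densities $f_k$ which \emph{do} satisfy the hypotheses of Theorem \ref{thmsuffschaos}, then use that theorem to produce an entropically $f_k\d v$--chaotic family for each $k$, and finally patch these together along a diagonal.

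For the approximation, let $g_k(v) = f(v)\chi_{\{f(v)\leq k\}}\chi_{\{|v|\leq k\}}$, normalize $\tilde f_k = g_k/\int g_k$, and rescale velocities by the factor (tending to $1$) required to impose $\int v^2 f_k(v)\,\d v = 1$. Each $f_k$ is bounded, compactly supported (hence has all polynomial moments) and converges to $f$ in $L^1$. The delicate point is to check
\[
H(f_k|\gamma)\xrightarrow[k\to\infty]{} H(f|\gamma).
\]
Convergence of the Gaussian piece is immediate from the second-moment constraint; for $\int f_k\log f_k \to \int f\log f$ one applies dominated convergence separately on the positive and negative parts of $f\log f$, both of which are globally integrable under the assumptions $H(f|\gamma)<\infty$ and $\int v^2 f\,\d v<\infty$ (the negative part is controlled through the elementary bound $-x\log x \leq C\sqrt{x}$ for $x\leq 1$ together with Cauchy--Schwarz using the weight $(1+v^2)^{-1/2}$).

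For the diagonal step, set $\mu_k = f_k\,\d v$. Theorem \ref{thmsuffschaos} applied to each $f_k$ yields, for every fixed $j$,
\[
H\bigl(P_j[\mu_k^{\on}]_{\nSn}\,\big|\,\mu_k^{\otimes j}\bigr) \xrightarrow[N\to\infty]{} 0,
\qquad
\frac{1}{N} H\bigl([\mu_k^{\on}]_{\nSn}\,\big|\,\sigma^N\bigr) \xrightarrow[N\to\infty]{} H(f_k|\gamma).
\]
Choose an increasing sequence $N_k \to \infty$ so that for every $N\geq N_k$ and every $j\leq k$,
\[
H\bigl(P_j[\mu_k^{\on}]_{\nSn}\,\big|\,\mu_k^{\otimes j}\bigr) < \frac{1}{k}, \qquad \left|\frac{1}{N} H\bigl([\mu_k^{\on}]_{\nSn}\,\big|\,\sigma^N\bigr) - H(f_k|\gamma)\right| < \frac{1}{k},
\]
and for $N\in[N_k,N_{k+1})$ set $\mu^{(N)} := [\mu_k^{\on}]_{\nSn}$. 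Symmetry is immediate; the extensivity condition (iii) of Definition \ref{defschaos} follows from the construction combined with $H(f_k|\gamma)\to H(f|\gamma)$; and the marginal convergence (ii) follows from Pinsker applied to the first inequality above, combined with the $L^1$ convergence $f_k^{\otimes j} \to f^{\otimes j}$, yielding total variation (and hence weak) convergence of $P_j\mu^{(N)}$ to $\mu^{\otimes j}$.

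The main obstacle is establishing $H(f_k|\gamma) \to H(f|\gamma)$ from the weak hypotheses available, which amounts to carefully controlling the possibly-signed integrand $f\log f$ under truncation and mild rescaling; once this continuity-of-entropy result is secured, the remainder is a mechanical diagonal extraction.
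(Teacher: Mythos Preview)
Your proposal is correct and follows essentially the same route as the paper's proof: approximate $f$ by densities satisfying the hypotheses of Theorem~\ref{thmsuffschaos}, invoke that theorem to get entropically chaotic families for each approximant, and then run a diagonal extraction using both the strong marginal convergence~\eqref{strong2} and the extensivity condition~(iii). The only noteworthy difference is the choice of approximation: the paper smooths by the heat semigroup $e^{\delta\Delta}$ (after a hard tail cutoff when the fourth moment is infinite) and then applies an affine change of variable, whereas you use a double hard truncation in value and support followed by a dilation; both produce bounded densities with finite fourth moment and unit second moment, so either works. Your treatment of $H(f_k|\gamma)\to H(f|\gamma)$ is in fact more explicit than the paper's, which dispatches this point with ``It is easy to see.''
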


Theorem \ref{thmsuffschaos} has the following shortcoming: One might hope that
 for {\em any} $f\d v$--entropically chaotic family $\{\mu^{(N)}\}$, and not only conditioned tensor products, one would have

\medskip
\noindent  ($ii^\prime$)\ For any $k\in\N$, ${\displaystyle 
\lim_{N\to\infty} H(P_k(\mu^{(N)}|\mu^{\otimes k})  = 0\ .}$
\medskip

We would then include condition $(ii')$ in the definition of entropic chaos.  However, Theorem \ref{thmsuffschaos}  asserts this only when  $\{\mu^{(N)}\}$ is a conditioned tensor product. While the
set of conditioned tensor product states is not propagated into itself by the Kac mater equation,
probability densities satisfying condition $(H)$ for some
$\{[\mu^{\on}]_{\nSn}\}_{N\in \N}$ may well be.
This leads to the following problem, for which we have no solution:

\begin{Open} \label{openpb}
Does Condition $(H)$ in Theorem~\ref{thmsuffschaos}
also imply Condition   ($ii^\prime$)?  More generally, does  ($ii^\prime$) hold for a larger and easily recognized class
of chaotic sequences, larger than those constructed by means of 
conditioning tensor products?
\end{Open}

Also, as indicated above, a natural next step in the development of 
Kac's program consists in studying the
propagation of Conditions {\em (iii)}  or $(H)$  --- or  {\em ($ii^\prime$)} 
--- under the Kac master equation.

\subsection{Final remarks}

As an intermediate step in the proof of Theorem~\ref{thmsuffschaos},
we shall establish the following statement:

\begin{Thm}[asymptotic upper semi-extensivity of the entropy]
\label{thmuppergen}
For each $N$, let $\mu^{(N)}$ be a probability density on
$\nSn$, such that $\mu^{(N)}$ is $\mu$-chaotic, in the sense of
Definition~\ref{defchaos}. Then
\[ H(\mu|\gamma) \leq \liminf_{N\to\infty}
\frac{H(\mu^{(N)}|\sigma^N)}N.\]
\end{Thm}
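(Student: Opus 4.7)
My plan is to reduce the spherical problem to a Euclidean one with a genuine product reference, where the classical subadditivity of relative entropy applies. The observation that drives the construction is that $\sigma^N$ is the angular part of $\gamma^{\otimes N}$: for any density $F^{(N)} = \d\mu^{(N)}/\d\sigma^N$ on $\nSn$, I would lift $\mu^{(N)}$ to $\R^N$ by
\[ g^{(N)}(v) := F^{(N)}\!\left(\sqrt{N}\,\frac{v}{|v|}\right)\gamma^{\otimes N}(v),\qquad v\in\R^N\setminus\{0\},\]
and set $\tilde\mu^{(N)}:=g^{(N)}\,\d v$. This $\tilde\mu^{(N)}$ is the law of the product $R\,\Omega$, where $R,\Omega$ are independent, $R$ has the same law as $|X|$ for $X\sim\gamma^{\otimes N}$, and $\Omega\in S^{N-1}$ is the push-forward of $\mu^{(N)}$ under the rescaling $v\mapsto v/\sqrt{N}$. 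A polar-coordinate computation (the angular factor depends only on $v/|v|$ and the radial factor integrates to $1/|S^{N-1}|$) then yields the key identity
\[ H(\tilde\mu^{(N)}|\gamma^{\otimes N}) \;=\; H(\mu^{(N)}|\sigma^N).\]

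Because $\tilde\mu^{(N)}$ is symmetric in its $N$ coordinates (inherited from $\mu^{(N)}$) and the reference $\gamma^{\otimes N}$ is now a genuine product measure, the standard subadditivity of relative entropy---which follows inductively from the non-negativity of mutual information---gives
\[ H(\tilde\mu^{(N)}|\gamma^{\otimes N}) \;\geq\; N\,H(P_1\tilde\mu^{(N)}|\gamma).\]
I would then show that $P_1\tilde\mu^{(N)}\to\mu$ weakly. Writing $V=(V_1,\ldots,V_N)\sim\tilde\mu^{(N)}$, we have $V_1 = (R/\sqrt{N})\cdot(\sqrt{N}\,\Omega_1)$; by construction $\sqrt{N}\,\Omega_1$ has law $P_1\mu^{(N)}$, which converges weakly to $\mu$ by the chaos hypothesis (property (ii) of Definition~\ref{defchaos}), whereas $R/\sqrt{N}\to 1$ in probability by the law of large numbers applied to $R^2/N$. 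Since $R$ is independent of $\Omega$, Slutsky's lemma yields $V_1\to W\sim\mu$ in distribution.

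The final step is to invoke the lower semicontinuity of $H(\,\cdot\,|\gamma)$ under weak convergence of probability measures on $\R$, which gives $\liminf_N H(P_1\tilde\mu^{(N)}|\gamma)\geq H(\mu|\gamma)$. Chaining the three displayed inequalities,
\[ \liminf_N \frac{H(\mu^{(N)}|\sigma^N)}{N} \;=\; \liminf_N \frac{H(\tilde\mu^{(N)}|\gamma^{\otimes N})}{N} \;\geq\; \liminf_N H(P_1\tilde\mu^{(N)}|\gamma) \;\geq\; H(\mu|\gamma),\]
which is the desired conclusion. The main conceptual step is identifying the entropy-preserving lift to $\R^N$ with a product reference; once this construction is in place, the remaining ingredients---Han-type subadditivity, the concentration $R/\sqrt{N}\to 1$, Slutsky, and lower semicontinuity of relative entropy---are each routine. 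If any genuine difficulty arises, I expect it to lie in making the entropy identity fully rigorous when $F^{(N)}$ has only measure-theoretic regularity, which should be handled by a standard truncation/mollification of $F^{(N)}$ together with monotone convergence.
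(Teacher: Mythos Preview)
Your argument is correct and takes a genuinely different route from the paper's.

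The paper proves the more general Theorem~\ref{thmuppergen'} (with an arbitrary reference density $g$ in place of $\gamma$) by the Legendre dual representation of entropy: one fixes a near-optimal test function $\varphi$ for $H(\mu|\nu)$, plugs $\Phi=\sum_j\varphi(v_j)$ into the dual bound for $H(\mu^{(N)}|[\nu^{\on}]_{\nSn})$, and controls the resulting partition function $\int e^{\Phi}\,d\nu^{(N)}$ via the local central limit theorem estimates of Section~\ref{secrestr}. Theorem~\ref{thmuppergen} is then the special case $g=\gamma$, where $[\gamma^{\on}]_{\nSn}=\sigma^N$ and the partition-function term is trivially $O(1)$.

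Your approach instead exploits the specific fact that $\gamma^{\on}$ is rotation invariant: this is exactly what makes the radial lift $\tilde\mu^{(N)}$ have $H(\tilde\mu^{(N)}|\gamma^{\on})=H(\mu^{(N)}|\sigma^N)$, after which you are on $\R^N$ with a genuine product reference and can invoke the elementary superadditivity $H(\tilde\mu^{(N)}|\gamma^{\on})\geq N\,H(P_1\tilde\mu^{(N)}|\gamma)$, followed by Slutsky and lower semicontinuity. This is more direct and entirely avoids the LCLT machinery, but it does not extend to a non-Gaussian reference $\nu$ (since $\nu^{\on}$ is not radial, the lift no longer preserves entropy), whereas the paper's duality approach does, which is needed for Theorem~\ref{thmentrchaos}. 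Your worry about regularity of $F^{(N)}$ is unfounded: the entropy identity is just Tonelli in polar coordinates for nonnegative measurable integrands, and if $H(\mu^{(N)}|\sigma^N)=+\infty$ along a subsequence the statement is trivial anyway.
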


This result certainly has interest in its own right, and
further explains the meaning of Condition $(H)$ in Theorem~\ref{thmsuffschaos}.
By the way, Theorem~\ref{thmuppergen} and Theorem~\ref{thmexist} together
provide a proof of Remark~2 following Theorem~6.1 in~\cite{vill:cer:03}.
(The author had at the time thought that this remark was obvious.)
This combination of results also establishes a kind of
$\Gamma$-convergence of the functionals $H(\cdot|\sigma^N)/N$ 
to the functional $H(\cdot|\gamma)$.
\med

We close our introductory discussion with some final remarks on Kac's program.
Kac suggested that one could prove quantitative theorems on the non linear
Boltzmann-Kac equation by means of an investigation of the linear
master equation. At the time Kac wrote his paper, the rigorous mathematical
theory of the Boltzmann equation had been in the doldrums since the landmark work of Carleman
\cite{carleman} in the thirties. The suggestion of Kac to recast the problem 
of investigating  nonlinear equations such as~\eqref{kaceq} from a {\em probabilistic}
many particle point of view was made in the hope that this might be a better path to progress.

However, the history of the subject has
not developed as Kac had hoped.  The lack of progress between the papers of
Carleman and Kac turned out to be due as much to lack of attention as to the intrinsic difficulties of
nonlinear equations equations such as~\eqref{kaceq}. Once a new generation of mathematicians took
up such equations as an active field of research, a well developed and full--fledged theory emerged.
And so far, no relevant property of the nonlinear
equation~\eqref{kaceq} has been proved via~\eqref{master},
which cannot be proved by  direct means.  Indeed, once again in this paper, {\em we shall prove
lack of a uniform entropy production inequality for the Kac master equation (Theorem \ref{thmslow}) through an analysis of the Boltzmann--Kac equation}.

Still, Kac's program
is worth trying to push for various reasons. 
First, the theory of
spatially homogeneous Boltzmann equations has reached maturity,
with quite precise results, and specialists are now looking for
very sharp statements; it might be that Kac's approach, thanks to its
strong physical content, could be adapted to such refinements. Just because so far
no relevant property of ~\eqref{kaceq} has been first proved via~\eqref{master} does not mean that this is
cannot be done, and certainly the probabilistic ground is less worked--over. 

Second, it can be seen as a baby model for the much more subtle
problem of propagation of chaos in the ``true'' spatially
inhomogeneous Boltzmann equation. 

Finally, one might be interested
in it for just historical reasons, since Kac's paper is one
of the founding works in modern kinetic theory ---
and just perhaps, the renewed focus on Kac's ideas
will yield new progress of a fundamental sort.

\subsection{Organization of the paper}

In Section~\ref{secrestr} below, we first study the
asymptotics of the restricted tensor product, and prove
Proposition~\ref{propct}. In Section~\ref{secupper},
we establish the asymptotic upper semi-continuity of the entropy
(Theorem~\ref{thmuppergen}). In Section~\ref{secmarginals},
we study the convergence of marginals, establishing in particular 
Condition~(ii) of Definition~\ref{defchaos} for the restricted tensor product.
Asymptotic extensivity of the restricted tensor product
(or perturbations thereof) will be proven in Section~\ref{secmicromacro}.
Then, in Section~ \ref{secunbounded} we prove
Theorem~\ref{thmexist}.
Finally, we shall investigate entropy production and prove Theorem 
\ref{thmslow} in Section \ref{production}.  The  appendix contains the statement
and proof of a version of the local central limit theorem with precise quantitative
bounds that we require in   Section~\ref{secrestr}, but it also has
some independent interest.

We close  this introduction by thanking Julien Michel 
for providing reference~\cite{FoataFuchs}; and Alessio Figalli 
for his careful reading of and comments on an earlier version of the 
manuscript.

\section{Asymptotics of the restricted tensor product} \label{secrestr}

The goal of this section is to analyze the asymptotic behavior of
\[ Z_N(f,r):= \int_{S^{N-1}(r)} f^\on\,\d \sigma^N_r\]
as $N\to\infty$, where $\sigma^N_r$ is the uniform probability measure
on $S^{N-1}(r)$.

\begin{Lem}[probabilistic interpretation of $Z_N$] \label{lemZn}
Let $f$ be a probability density on $\R$, and 
let $\{V_j\}_{j\in \N}$ be a sequence of independent random variables
with common law $f(v)\d v$. Then the random variable
$S_N:=\sum_{j=1}^N V_j^2$ has density $s_N(u)\,\d u$, where
\begeq\label{sden}
 s_N(u)= \frac{|\Sn|}2 \, u^{\frac{N}2 -1} Z_N(f,\sqrt{u}) \ ,
 \endeq
where
${\displaystyle  |\Sn| = \frac{2\pi^{N/2}}{\Gamma(\frac{N}2)}}$
stands for the $(N-1)$-dimensional volume of the unit sphere $\Sn\subset\R^N$.
As a particular case, the law of $V_1^2$ has density $h(u)\,\d u$,
where
\begeq\label{handf} 
h(u) = \frac{1}{2\sqrt{u} }\Bigl(f(\sqrt{u}) + f(-\sqrt{u})\Bigr).
\endeq
\end{Lem}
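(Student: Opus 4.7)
The strategy is a straightforward disintegration-of-measure argument: I will compute the law of $S_N=\sum_j V_j^2$ by integrating a test function against the product density $f^{\otimes N}$, then decompose the resulting integral over $\R^N$ into an integral over spheres of radius $r=\sqrt{u}$.

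For any bounded continuous $\phi$ on $[0,\infty)$,
\begin{equation*}
\E\bigl[\phi(S_N)\bigr] = \int_{\R^N} \phi(|v|^2)\, f^{\otimes N}(v)\,\d v.
\end{equation*}
Writing $v=r\omega$ with $r\geq 0$, $\omega\in S^{N-1}$, the Lebesgue measure disintegrates as $\d v = r^{N-1}\,\d r\,\d\omega$, and the uniform surface measure on $S^{N-1}(r)$ has total mass $r^{N-1}|S^{N-1}|$; equivalently, $\d\omega_r = r^{N-1}|S^{N-1}|\,\d\sigma_r^N$. Collecting factors,
\begin{equation*}
\int_{S^{N-1}(r)} f^{\otimes N}\,\d\omega_r = r^{N-1}|S^{N-1}|\,Z_N(f,r),
\end{equation*}
so that
\begin{equation*}
\E[\phi(S_N)] = |S^{N-1}|\int_0^\infty \phi(r^2)\,Z_N(f,r)\,r^{N-1}\,\d r.
\end{equation*}
The substitution $u=r^2$, $\d r = \d u/(2\sqrt u)$, $r^{N-1}=u^{(N-1)/2}$, converts this into
\begin{equation*}
\E[\phi(S_N)] = \frac{|S^{N-1}|}{2}\int_0^\infty \phi(u)\,u^{\frac N2-1}\,Z_N(f,\sqrt u)\,\d u,
\end{equation*}
which by definition of the density identifies $s_N(u)$ as the formula~\eqref{sden}.

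For the special case, one may either specialize the general identity to $N=1$ (noting $|S^0|=2$ and $Z_1(f,r)=\tfrac12(f(r)+f(-r))$ since $\sigma^1_r$ is the uniform probability measure on the two-point set $\{-r,r\}$), or argue directly from the cumulative distribution function $\P(V_1^2\leq u)=\int_{-\sqrt u}^{\sqrt u}f(v)\,\d v$ and differentiate. Either route yields \eqref{handf}.

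There is no genuine obstacle here — this is a pure bookkeeping lemma. The only care needed is to keep straight the distinction between the uniform probability measure $\sigma^N_r$ (used in the definition of $Z_N$) and the surface area measure $\d\omega_r$ (which appears naturally from polar coordinates), and to absorb the correct factor of $r^{N-1}|S^{N-1}|$ when passing between them.
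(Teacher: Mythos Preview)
Your proof is correct and follows essentially the same route as the paper: test against a continuous function, switch to polar coordinates, then substitute $u=r^2$. The paper's argument is identical in structure, differing only in that it writes the spherical integral in terms of the unit sphere $\Sn$ (with $f(\sqrt{u}\,y_j)$) rather than directly in terms of $Z_N$, whereas you invoke $Z_N(f,r)$ immediately; your explicit remark distinguishing $\sigma^N_r$ from the surface area measure is a clarifying touch that the paper leaves implicit.
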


\begin{proof}[Proof of Lemma~\ref{lemZn}]
We use the notation $r=\sqrt{\sum v_i^2}$ and let $\E$ denote the expectation 
with respect to the uniform probability measure on $\Sn$.
Whenever $\varphi$ is a continuous test function supported in
$[0,+\infty)$, a polar change of variables leads to
\begin{align*}
\E\: \varphi \left(\sum_{j=1}^N V_j^2\right) & = 
\int_{\R^n} f^\on (v)\, \varphi(r^2)\,\d v 
 = |\Sn| \int_{[0,+\infty]\times \Sn} f^\on \varphi(r^2) r^{N-1}\,\d r\,\d\sigma \\
& = |\Sn| \int_0^{+\infty} \varphi(u) \left (
\frac{u^{\frac{N-1}2}}{2\sqrt{u}} \int_{\Sn} f(\sqrt{u}\, y_1) \ldots
f(\sqrt{u}\, y_N)\,d\sigma(y) \right )\,\d u \\
& = \int_0^{+\infty} \varphi(u) \left ( \frac{|\Sn|}2 \, u^{\frac{N}2 -1}
Z_N(f,\sqrt{u}) \right )\,du.
\end{align*}
\end{proof}

Our next theorem, which is the main result of this section,  is a slightly sharpened
version of Theorem~\ref{propct}. It provides more information on how the remainder terms there depend on $f$. In describing this dependence, we shall use the following notation:

Let $f$ be a probability density on $\R$ with finite moment of order~4,
and finite $L^p$ norm for some $p\in (1,\infty)$.
Define the mean kinetic energy and its variance by
\[ E =  \int f(v)v^2\,\d v;\qquad
\Sigma = \sqrt{\int_\R (v^2-E)^2 f(v)\,\d v}.\]
(As is in the introduction, we have chosen units in which the mass $m$ is
equal to~$2$.) Let $\underline{E}$, $\ov{E}$, $\underline{\Sigma}$,
$\ov{L}$ be constants such that
\[ 0<\underline{E}\leq E\leq \ov{E}<+\infty;\qquad
\Sigma\geq \underline{\Sigma}>0;\qquad
\|f\|_{L^p} \leq \ov{L},\]
and let $\chi_4$ be {\em any} nonnegative function of $r>0$, such that
$\chi_4(r)\longrightarrow 0$ as $r\to 0$ and
\[ \int_{|v|\geq \frac1r} f(v)v^4\,\d v \leq \chi_4(r).\]
(For instance, we could define $\chi_4$  to be the left hand side, but in applications another choice, such as
${\displaystyle r^2\int_{\R}f(v)v^6\d v}$ may be more useful if, say, $f$ possesses a $6$th moment.)

\begin{Thm}[asymptotics for the conditioned tensor product]\label{thmasymp}
With the above notation, define
\[ Z_N(f,r):= \int_{S^{N-1}(r)} f^\on\, \d\sigma^N_r,\]
where $S^{N-1}(r)$ is the sphere of radius $r$ in $\R^N$,
and $\sigma^N_r$ is the uniform probability measure on that sphere.
Then, as $N\to\infty$,
\[ Z_N(f,r) = \frac{\sqrt{2}}{\Sigma}\: \gamma^{(N)}(r)
\left(\frac{\alpha_N(N)}{\alpha_N(r^2)}\right)
\Bigl[ e^{-\frac{(r^2-NE)^2}{2N\Sigma^2}} + o(1) \Bigr],\]
where 
\[ \gamma^{(N)}(r) = \frac{e^{-r^2/2}}{(2\pi)^{N/2}}\]
is the restriction of $\gamma^{\on}$ to $S^{N-1}(r)$, 
\[ \alpha_N(s) = s^{\frac{N}2-1} e^{-\frac{s}2},\]
and $o(1)$ stands for an expression which is bounded
by a function $\omega(N)\to 0$, depending only on
$\underline{E}$, $\ov{E}$, $\underline{\Sigma}$,
$p$, $\ov{L}$ and $\chi_4$.

In particular,
\[ Z_N(f,\sqrt{N}) = \frac{\sqrt{2}}{\Sigma}
\gamma^{(N)}(\sqrt{N}) \Bigl( e^{-\frac{N(1-E)^2}{2\Sigma^2}} + o(1) \Bigr).\]
\end{Thm}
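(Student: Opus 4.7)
My plan is to reduce the theorem to a local central limit theorem for the random variable $S_N = \sum_{j=1}^N V_j^2$, where $V_j$ are i.i.d.\ with common law $f(v)\,dv$. By Lemma~\ref{lemZn},
\[ Z_N(f,r) \;=\; \frac{2\,s_N(r^2)}{|S^{N-1}|\, r^{N-2}}, \]
where $s_N$ is the density of $S_N$. Since $V_j^2$ has mean $E$ and variance $\Sigma^2<\infty$ (here the fourth-moment assumption on $f$ enters), the local CLT should give
\[ s_N(u) \;=\; \frac{1}{\sqrt{2\pi N\,\Sigma^2}}\, e^{-\frac{(u-NE)^2}{2N\Sigma^2}} \;+\; o\!\left(\frac{1}{\sqrt{N}}\right), \]
with the $o(1/\sqrt{N})$ term uniform in $u$ and controlled solely by the parameters $\underline{E},\overline{E},\underline{\Sigma},p,\overline{L},\chi_4$. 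Substituting $u=r^2$ and plugging into the identity for $Z_N(f,r)$ should give the main term of the theorem up to factors of $\Gamma(N/2)$, and a Stirling expansion
\[ \Gamma(N/2) \;\sim\; \sqrt{\tfrac{4\pi}{N}}\,\Bigl(\tfrac{N}{2e}\Bigr)^{N/2} \]
will convert the prefactor $2/(|S^{N-1}|\, r^{N-2}\sqrt{2\pi N\Sigma^2})$ into $\dfrac{\sqrt{2}}{\Sigma}\,\gamma^{(N)}(r)\,\dfrac{\alpha_N(N)}{\alpha_N(r^2)}$; the $\alpha_N(N)/\alpha_N(r^2)$ and $e^{-r^2/2}/(2\pi)^{N/2}$ factors are exactly what appear when one absorbs $N^{N/2-1}e^{-N/2}/r^{N-2}$ and an $e^{-r^2/2}$ into the Gaussian prefactor.

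For the local CLT itself, the natural route is the Fourier-analytic one: write
\[ s_N(u) \;=\; \frac{1}{2\pi}\int_\R \widehat{h}(\xi)^N\,e^{-iu\xi}\,d\xi, \]
where $h$ is the density of $V_1^2$, given explicitly by $h(u)=(f(\sqrt{u})+f(-\sqrt{u}))/(2\sqrt{u})$ from Lemma~\ref{lemZn}. The $L^p$ hypothesis on $f$ translates, via Young's convolution inequality, into the statement that $h^{*N}$ is continuous for $N$ large enough (so that $N/(N-1)\le p$), which justifies pointwise evaluation of $s_N$. Splitting the $\xi$-integral into a neighbourhood of $0$, where a Taylor expansion of $\log\widehat{h}$ yields the Gaussian profile, and a complement, where $|\widehat{h}(\xi)|^N$ decays once $N$ is large (using the $L^p$ control to beat Riemann--Lebesgue quantitatively), produces the desired uniform remainder. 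All constants in the estimates depend only on $E,\Sigma$, on $p,\overline{L}$ (through the tail of $\widehat{h}$), and on $\chi_4$ (through the continuity modulus of $\widehat{h}''$ near $0$). This quantitative local CLT is what the appendix of the paper is devoted to, and I would simply invoke it here.

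The key step where care is needed is the uniformity of the remainder in $r$. For $r^2$ in a window of width $O(\sqrt{N\log N})$ around $NE$, the Gaussian term is bounded below (or decays slowly enough) that a relative error $o(1/\sqrt{N})$ in the local CLT yields a multiplicative $o(1)$ in the final formula; outside such a window both the true $s_N(r^2)$ and the Gaussian approximation are super-polynomially small, and Chebyshev/Markov estimates (using the fourth moment) give the bound directly without appealing to the refined local CLT. Thus the main obstacle is purely the quantitative local CLT with the stated dependence on $(\underline{E},\overline{E},\underline{\Sigma},p,\overline{L},\chi_4)$; once that is in hand, the proof of Theorem~\ref{thmasymp} is an exercise in combining Lemma~\ref{lemZn} with Stirling and matching the prefactors, and the specialisation $r=\sqrt{N}$ at the end is immediate.
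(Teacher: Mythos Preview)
Your approach is correct and essentially identical to the paper's: reduce to Lemma~\ref{lemZn}, invoke the quantitative local CLT from the Appendix for $h^{*N}$, and use Stirling to rewrite the prefactor as $\frac{\sqrt{2}}{\Sigma}\gamma^{(N)}(r)\alpha_N(N)/\alpha_N(r^2)$. Two small points worth tightening: first, the transfer of regularity from $f$ to $h$ is not immediate, since $h(u)=(f(\sqrt u)+f(-\sqrt u))/(2\sqrt u)$ carries a $u^{-1/2}$ singularity at the origin; the paper uses a H\"older argument to show $h\in L^q$ for $q<2p/(p+1)$, so the continuity of $h^{*N}$ holds for $N\ge q'$ rather than under your condition $N/(N-1)\le p$. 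Second, your window-splitting argument for uniformity in $r$ is unnecessary: the local CLT gives an \emph{additive} error $o(1/\sqrt N)$ uniformly in $u$, and the $o(1)$ in the theorem's bracket is itself additive, so the uniformity in $r$ comes for free with no case analysis.
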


\begin{proof}[Proof of Theorem~\ref{thmasymp}]
Since (with the notation of Lemma~\ref{lemZn})
$V_1^2$ has density $h$, it follows that $S_N$ has density
$h^{\ast N}$ (the $N$-fold convolution product of $h$ with itself).
So $s_N=h^{\ast N}$, which leads to the formula
\begeq\label{ZNfu} 
Z_N (f,\sqrt{u}) = \frac{2 h^{\ast N}(u)}{u^{\frac{N}2-1} |\Sn|}.
\endeq

We shall use the local central limit theorem to approximate
$h^{\ast N}$. For that we need some estimates on $h$.
First note that
$\int h(u)\,u\,\d u = E$ and $\int h(u)\, u^2\,\d u =
E^2 + \Sigma^2$
Also,
\[ \int_{u\geq 1/r} h(u) u^2\,\d u =
\int_{v\geq 1/\sqrt{r}} f(v) v^4\,\d v.\]
Next, let $q>1$; by convexity of $t\mapsto t^q$, and the definition of $h$,
\begin{align*}
\int_{\R_+}h^q(u)\,\d u &
\le \frac12 \int_{\R_+} u^{-q/2}\bigl(f^q(\sqrt{u}) + 
f^q(-\sqrt{u})\bigr)\d u\\
&= \int_{\R_+} u^{-(q-1)/2}\; \frac{1}{2\sqrt{u} }
\bigl(f^q(\sqrt{u}) + f^q(-\sqrt{u})\bigr)\d u \\
&= \int_\R |v|^{1-q}f^q(v)\d v\ \leq
\int_{[-1,1]} \frac{f^q(v)}{|v|^{q-1}}\,\d v \ + \
\int f^q(v)\d v.
\end{align*}
If $q<(2p)/(p+1)$, then, by H\"older's inequality,
\[ \int_{[-1,1]} \frac{f^q(v)}{|v|^{q-1}}\,\d v \ \leq
\left(\int f^p\right)^{\frac{q}{p}}
\left(\int_{[-1,1]}\frac{\d v}{|v|^{\frac{(q-1)p}{p-q}}}\right)
^{1-\frac{q}{p}}\leq C(p,q)\|f\|_{L^p}^q.\]
On the other hand, $\int f^q\d v= \|f\|_{L^q}^q \leq \|f\|_{L^p}^q$
as soon as $q\leq p$, because $f$ is a probability measure.
The conclusion is that  there is a finite constant
$C(p,q)$ such that
\begeq\label{lqbnd}
\|h\|_{L^q} \leq C(p,q) \|f\|_{L^p} \qquad {\rm for\  all}\qquad  q<(2p)/(p+1)\ .
\endeq

Now let $g$ be defined by
$g(v) = \Sigma \, h(E + \Sigma v)$.
so that
\[ \int g(v)\,\d v =1 \qquad
\int g(v)\,v\,\d v=0\qquad 
\int g(v)\,v^2\,\d v=1\ .\]
It follows immediately from (\ref{lqbnd}) that $g$ lies in $L^q$ for 
some $q>1$.

Also, $g$ inherits from $h$ a sort of ``concentration bound'' 
that we require to apply Theorem~\ref{LCLT} in Appendix~\ref{secLCLT}:
\begin{align*} \int_{u\geq \frac1r} g(u) u^2\,\d u
& = \frac1{\Sigma} \int_{s\geq \Sigma/r +E} h(s) \,
(s-E)^2\,\d s \\
& \leq \frac2{\Sigma} \int_{s\geq \Sigma/r +E}
h(s) (s^2+E^2)\,\d s  \\
& \leq \frac{2}{\Sigma} \int_{s\geq \Sigma/r +E}
h(s) s^2\,\d s \ + \ 
 \frac{2}{\Sigma}\frac{E^2 + \Sigma^2}{(\Sigma/r +E)^2}.
\end{align*}
Evidently,  the quantity on the left goes to~0 as $r\to 0$,
with a rate which depends on $\int_{|v|\geq 1/\sqrt{r}} f(x)x^4\,\d x$.

As a conclusion, $g$ satisfies all the assumptions of
Theorem~\ref{LCLT} in Appendix~\ref{secLCLT},
so there is a function $\lambda(N)$,
only depending on the above-mentioned bounds, such that
\[ \sup_{u\in\R} \bigl|\sqrt{N} g^{\ast N} (\sqrt{N} u) - \gamma(u) \bigr| 
\leq \lambda(N)\ ,\]
and so,
\[ \sup_{u\in\R} \Bigl| g^{\ast N}(u) - \frac{1}{\sqrt{N}}\gamma \left(\frac{u}{\sqrt{N}}\right)
\Bigr| \leq \frac{\lambda(N)}{\sqrt{N}}.\]
Then, since 
$g^{\ast N}(u) = 
\Sigma h^{\ast N}(NE+\Sigma u)$,
we deduce
\[ \sup_{x\in\R}\: \biggl|
h^{\ast N}(x)\ -\ \frac{1}{\sqrt{N}\Sigma}\, \gamma\left(
\frac{x-NE}{\sqrt{N}\Sigma}\right) \biggr|
\leq \frac{\lambda(N)}{\sqrt{N}\Sigma}.\]

Now let us insert this bound in~\eqref{ZNfu} and apply Stirling's formula,
in the form
\[ \Gamma\left(\frac{N}2\right) =
\sqrt{\pi N}\, \alpha_N(N)\, 2^{-\frac{N}2 +1} 
\Bigl(1+O\left(\frac1{\sqrt{N}}\right)\Bigr),\]
where
$\alpha_N(u):= u^{\frac{N}2-1}e^{-u}$.
This results in
\[ Z_N(f,\sqrt{u}) = 
\frac{\sqrt{\pi N} \alpha_N(N)\, 2^{-\frac{N}2+1}}{\sqrt{N}\Sigma}
\, \left(\gamma\left(\frac{u-NE}{\sqrt{N}\Sigma}\right) + o(1)\right)
\left(1 + O\left(\frac1{\sqrt{N}}\right)\right).\]
Now the desired expression follows easily.
\end{proof}

\begin{Rk} Consider the case when $E=1$; then
\[ Z_N(f,\sqrt{N}) = \gamma^{(N)}(N) \frac{\sqrt{2}}{\Sigma}.\]
Thus, after renormalization by the Gaussian density, $Z_N(f,\sqrt{N})$
has a nontrivial finite limit as $N\to\infty$. This was derived in
the above proof by  using successively the
Local Central Limit Theorem and Stirling's formula.
Actually, the latter can be eliminated: Since $\gamma^\on$ is constant on
the sphere (by the way, the Gaussian is the only tensor product function
to satisfy this property), we can write, with the notation~\eqref{Z'N},
\[ Z'_N(f,r) = \frac{\dps\int_{S^{N-1}(r)} f^\on\,\d\sigma^N_r}
{\dps\int_{S^{N-1}(r)} \gamma^\on\,\d\sigma^N_r}
= \frac{Z_N(f,r)}{Z_N(\gamma,r)},\]
and in view of Lemma~\ref{lemZn} this simplifies into
\[ Z'_N(f,r)=\frac{h_N(f,r^2)}{h_N(\gamma,r^2)},\]
the asymptotics of which can be computed by using only the Local
Central Limit Theorem, and not Stirling's formula. Actually,
this line of reasoning shows how to \emph{deduce} Stirling's formula
from the Local Central Limit Theorem. This observation in itself
is not new (it is made explicitly in the discussion of the 
Local Central Limit theorem in~\cite{FoataFuchs}; 
see also~\cite[Problem~2]{major:problemsfourier}).
However, it is
interesting to see that here it arises naturally as part of a physically
relevant problem.
\end{Rk}

We conclude this section with a higher-dimensional generalization
of Theorem~\ref{thmasymp}, beginning with the analogs of some definitions we made for densities on $\R$.
. Let $f$ be a probability density on
$\R^k$ with finite moment of order~4, and finite $L^p$ norm
for some $p\in (1,\infty)$. Define
\[ E =  \int_{\R^k} f(v)|v|^2\,\d v;\qquad
\Sigma = \sqrt{\int_{\R^k} (|v|^2-E)^2 f(v)\,\d v}.\]
Let $\underline{E}$, $\ov{E}$, $\underline{\Sigma}$,
$\ov{L}$ be such that
\[ 0<\underline{E}\leq E\leq \ov{E}<+\infty;\qquad
\Sigma\geq \underline{\Sigma}>0;\qquad
\|f\|_{L^p} \leq \ov{L},\]
and let $\chi_4$ be a nonnegative function of $r>0$, such that
$\chi_4(r)\longrightarrow 0$ as $r\to 0$ and
\[ \int_{|v|\geq \frac1r} f(v)|v|^4\,\d v \leq \chi_4(r).\]
Again in the higher dimensional case, it is easy to see that 
if $f\in L^p$ for some $p>1$, then the density  $h$ of  $v^2$ 
under $f(v){\rm d}v$ is in $L^q$ for some $q>1$. 
Then we have the following analogue of Theorem~\ref{thmasymp}:

\begin{Thm}[asymptotics for the conditioned tensor product]\label{thmasympk}
With the above notation, define
\[ Z_m(f,r):= \int_{S^{km-1}(r)} f^{\otimes m}\, \d\sigma^{km}_r,\]
Then, as $m\to\infty$,
\[ Z_m(f,r) = \frac{\sqrt{2}}{\Sigma}\: \frac{e^{-\frac{r^2}2}}
{(2\pi)^{\frac{km}2}} \left(\frac{\alpha_{km}(km)}{\alpha_{km}(r^2)}\right)
\Bigl[ e^{-\frac{(r^2-mE)^2}{2m\Sigma^2}} + o(1) \Bigr],\]
where $o(1)$ stands for an expression which is bounded
by a function $\omega(m)\to 0$, depending only on
$k$, $\underline{E}$, $\ov{E}$, $\underline{\Sigma}$,
$p$, $\ov{L}$ and $\chi_4$.

In particular,
\[ Z_m(f,r) = \frac{\sqrt{2}}{\Sigma} \gamma^{(km)}(r) 
\left(\frac{\alpha_{km}(km)}{\alpha_{km}(r^2)}\right)
\Bigl( e^{-\frac{m(k-E)^2}{2\Sigma^2}} + o(1) \Bigr).\]
\end{Thm}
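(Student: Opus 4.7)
The proof will closely parallel the scalar case (Theorem~\ref{thmasymp}), viewing the $m$ random vectors $V_1,\dots,V_m\in\R^k$ drawn from $f$ as producing the scalar i.i.d.\ sum $S_m=\sum_{j=1}^m |V_j|^2$, and analyzing it via the local central limit theorem of the appendix. The first step is the higher-dimensional analog of Lemma~\ref{lemZn}: polar decomposition in $\R^{km}$ (treating $(v_1,\dots,v_m)\in(\R^k)^m$ and letting $r=(\sum_j |v_j|^2)^{1/2}$) yields, for any test function $\varphi$,
\[ \E\,\varphi(S_m)=\int_{\R^{km}} f^{\otimes m}(v)\varphi(r^2)\,\d v = \int_0^\infty \varphi(u)\cdot \frac{|S^{km-1}|}{2} u^{\frac{km}{2}-1} Z_m(f,\sqrt{u})\,\d u, \]
so $S_m$ has density $s_m(u)=\frac{|S^{km-1}|}{2}u^{km/2-1}Z_m(f,\sqrt{u})$, and simultaneously $s_m=h^{\ast m}$, where $h$ is the density of $|V_1|^2$.

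Next I need to check that $h$ satisfies the hypotheses required to apply the local central limit theorem (Theorem~\ref{LCLT}). Polar coordinates in $\R^k$ give
\[ h(u)=\frac{u^{(k-2)/2}}{2}\int_{S^{k-1}} f(\sqrt{u}\,\omega)\,\d\sigma(\omega), \]
from which the moment identities $\int h(u)u\,\d u=E$ and $\int h(u)u^2\,\d u=E^2+\Sigma^2$ follow, along with the tail estimate $\int_{u\geq 1/r}h(u)u^2\,\d u=\int_{|v|\geq 1/\sqrt r}f(v)|v|^4\,\d v\leq\chi_4(r)$. For the $L^q$ bound, Jensen's inequality on $S^{k-1}$ and reconversion to a Cartesian integral give
\[ \int_0^\infty h^q(u)\,\d u \ \leq\ \frac{|S^{k-1}|^{q-1}}{2^{q-1}} \int_{\R^k} |v|^{(q-1)(k-2)} f^q(v)\,\d v, \]
and Hölder's inequality, combining the $L^p$ bound with a moment bound on $f$, shows this is finite for $q>1$ sufficiently close to $1$. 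Then I normalize to $g(v)=\Sigma h(E+\Sigma v)$, which has mean zero, variance one, the same $L^q$ integrability, and a uniform vanishing tail of $u^2$, precisely matching the assumptions of Theorem~\ref{LCLT}. That theorem yields a function $\lambda(m)\to 0$, depending only on $k,\underline E,\ov E,\underline\Sigma,p,\ov L,\chi_4$, such that
\[ \sup_{x\in\R}\Bigl|h^{\ast m}(x)-\frac{1}{\sqrt{m}\,\Sigma}\gamma\Bigl(\frac{x-mE}{\sqrt{m}\,\Sigma}\Bigr)\Bigr|\leq \frac{\lambda(m)}{\sqrt{m}\,\Sigma}. \]

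Finally I substitute into the formula $Z_m(f,\sqrt u)=2h^{\ast m}(u)/(u^{km/2-1}|S^{km-1}|)$ and apply Stirling in the form $\Gamma(km/2)=\sqrt{\pi km}\,\alpha_{km}(km)\,2^{-km/2+1}(1+O(1/\sqrt{km}))$ to $|S^{km-1}|=2\pi^{km/2}/\Gamma(km/2)$. Rearranging, and grouping the Gaussian factor $e^{-r^2/2}$ together with the $\alpha_{km}(km)/\alpha_{km}(r^2)$ prefactor, produces exactly the stated expansion. The special case $r=\sqrt{km}$ (not $\sqrt{m}$; note the formula in the statement uses $e^{-m(k-E)^2/(2\Sigma^2)}$, corresponding to $(r^2-mE)^2=m^2(k-E)^2$) follows by plugging in. The main subtlety, exactly as in the one-dimensional case, is obtaining the $L^q$ integrability of $h$ with a quantitative bound depending only on the listed parameters; once this is in hand, everything else is mechanical. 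No genuinely new phenomena appear in dimension $k$, but one must keep track of the dimensional weight $|v|^{(q-1)(k-2)}$ in the $L^q$ estimate on $h$, which is harmless for $k\geq 2$ (nonnegative exponent, controlled by the $L^p$ bound and the second moment) and reduces to the argument of Theorem~\ref{thmasymp} for $k=1$.
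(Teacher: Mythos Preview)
Your proposal is correct and follows essentially the same approach as the paper. The paper's own proof is only a brief sketch (``adapt Lemma~\ref{lemZn}; then the proof of Theorem~\ref{thmasymp} adapts with hardly any change, upon replacement of $N$ by $mk$''), and you have filled in the details faithfully --- in particular, you correctly isolate the one place where the higher-dimensional argument differs, namely the $L^q$ bound on $h$, where the weight $|v|^{(q-1)(k-2)}$ has nonnegative exponent for $k\geq 2$ (handled by H\"older against a moment of $f$) rather than the singular exponent $-(q-1)$ that forced the $[-1,1]$ splitting in the one-dimensional case.
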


\begin{proof}[Sketch of a proof of Theorem~\ref{thmasympk}]
First, one can adapt the proof of Lemma~\ref{lemZn} to the
present case; the conclusion should be changed as
follows: $S_m:=\sum_{j=1}^m |V_j|^2$ has density $s_m(u)\,\d u$,
where
\[ s_m(u) = \frac{|S^{km-1}|}2\: u^{\frac{km}2-1}\,
Z_m(f,\sqrt{u}).\]
In particular, the law of $|V_1|^2$ is
\[ h(u) = \frac{|S^{km-1}|}2\: u^{\frac{km}2-1}
\int_{S^{k-1}(\sqrt{u})} f\,\d\sigma^k_{\sqrt{u}}.\]

Then the proof of Theorem~\ref{thmasymp} adapts to the present case
with hardly any change, upon replacement of $N$ by $mk$.
\end{proof}

\section{Asymptotic upper semi-continuity of the entropy}\label{secupper}

To motivate this section, let us recall an important property of the
entropy functional. Consider a sequence of probability measures
$\mu^N$ on $\R$, converging weakly to another probability distribution
$\mu(\d v)$ as $N\to\infty$, and let $\nu$ be another probability measure.
Then
\[ H(\mu|\nu) \leq \liminf_{N\to\infty} H(\mu^N|\nu).\]
In other words, the relative entropy is {\em lower semi-continuous}
under weak convergence. 

In this section, we shall show that the same property is true
when the dimension goes to $\infty$, and weak convergence is
replaced by the chaos property. The following theorem is
a generalization of Theorem~\ref{thmuppergen}.

\begin{Thm}\label{thmuppergen'}
Let $g$ be a probability density on $\R$, such that
\[ \int g(x)x^2\,\d x=1,\qquad 
\int g(x) x^4\,\d x<+\infty,\qquad
g\in L^p(\R)\quad (p>1);\]
define $\nu(\d v)=g(v)\,\d v$. For each positive integer $N$,
let  $\mu^{(N)}$ be a symmetric probability measure on $\nSn$,
such that
\[ P_1\mu^{(N)} \xrightarrow[N\to\infty]{} \mu,\]
in the sense of weak convergence against bounded continuous functions. Then
\[ H(\mu|\nu) \leq \liminf_{N\to\infty} \ 
\frac{H(\mu^{(N)}|[\nu^{\on}]_{\nSn})}{N}.\]

More generally, if, for some positive integer $k$,
\[ P_k\mu^{(N)} \xrightarrow[N\to\infty]{} \mu_k,\]
then
\[ \frac{H(\mu_k|\nu^{\otimes k})}k \leq \liminf_{N\to\infty} \ 
\frac{H(\mu^{(N)}|[\nu^{\on}]_{\nSn})}{N}.\]
\end{Thm}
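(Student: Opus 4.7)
The plan is to reduce the spherical problem on $\nSn$ to a product-space problem on $\R^N$ via a Gaussian lift, apply the classical subadditivity of relative entropy with respect to a product reference, and then pass to the limit using lower semi-continuity of relative entropy together with the asymptotics of $Z_N(g,\sqrt{N})$ from Theorem~\ref{thmasymp}. I would first dispatch the Gaussian case $\nu=\gamma$ (where $[\nu^{\on}]_{\nSn}=\sigma^N$) and then bootstrap to general $\nu$.

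For the Gaussian case: given $\mu^{(N)}$ on $\nSn$, define a lift $\tilde\mu^{(N)}$ on $\R^N$ by endowing $\mu^{(N)}$ (rescaled from $\nSn$ to $S^{N-1}$) with the radial distribution of $\gamma^{\on}$, independently of the angular part. The crucial property of the Gaussian is that under $\gamma^{\on}$ the radius $|V|$ and direction $V/|V|$ are independent, so the lift satisfies the exact identity
\[H(\tilde\mu^{(N)}|\gamma^{\on}) = H(\mu^{(N)}|\sigma^N).\]
Since $\tilde\mu^{(N)}$ is symmetric on $\R^N$, the classical subadditivity of relative entropy with respect to the product reference $\gamma^{\on}$ gives
\[\frac{H(\tilde\mu^{(N)}|\gamma^{\on})}{N} \ge \frac{\lfloor N/k\rfloor}{N}\,H(P_k\tilde\mu^{(N)}|\gamma^{\otimes k}),\]
where the prefactor tends to $1/k$. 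Because $|V|/\sqrt{N}\to 1$ in probability under the Gaussian radial law, $P_k\tilde\mu^{(N)}$ has the same weak limit $\mu_k$ as $P_k\mu^{(N)}$, and lower semi-continuity of $H(\cdot|\gamma^{\otimes k})$ under weak convergence yields
\[\frac{H(\mu_k|\gamma^{\otimes k})}{k} \le \liminf_{N\to\infty}\frac{H(\mu^{(N)}|\sigma^N)}{N}.\]

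To pass from $\gamma$ to general $\nu$, I would use the explicit identity obtained from the density $d[\nu^{\on}]_{\nSn}/d\sigma^N = \prod_j g(v_j)/Z_N(g,\sqrt{N})$:
\[H\bigl(\mu^{(N)}\,\big|\,[\nu^{\on}]_{\nSn}\bigr) = H(\mu^{(N)}|\sigma^N) + \log Z_N(g,\sqrt{N}) - N\int \log g\,dP_1\mu^{(N)}.\]
Writing $\psi=g/\gamma$ and using $\int v^2\,dP_1\mu^{(N)}=1$ (from symmetry and $\sum v_j^2=N$), the $\log\gamma$ contribution evaluates exactly to $-N/2-(N/2)\log(2\pi)$, which cancels the leading term of $\log Z_N(g,\sqrt{N})$ supplied by Theorem~\ref{thmasymp}. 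This leaves
\[\frac{H(\mu^{(N)}|[\nu^{\on}]_{\nSn})}{N} = \frac{H(\mu^{(N)}|\sigma^N)}{N} - \int \log\psi\,dP_1\mu^{(N)} + o(1),\]
along with the analogous identity at the limit $H(\mu_k|\nu^{\otimes k})/k = H(\mu_k|\gamma^{\otimes k})/k - \int\log\psi\,dP_1\mu_k$. Subtracting these identities from the Gaussian inequality above, and using $\int\log\psi\,dP_1\mu^{(N)}\to\int\log\psi\,dP_1\mu_k$, the $\int\log\psi$ terms cancel and deliver the desired bound.

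The main obstacle is precisely the convergence of $\int\log\psi\,dP_1\mu^{(N)}$. The function $\log\psi=\log g+v^2/2+\tfrac12\log(2\pi)$ has a quadratic part (handled exactly by the constraint $\int v^2\,dP_1\mu^{(N)}=1$) together with a logarithmic part that may blow up where $g$ vanishes and may have super-linear tails. One may assume without loss of generality that $\liminf_N H(\mu^{(N)}|[\nu^{\on}]_{\nSn})/N<+\infty$ (otherwise the theorem is trivially true); this a priori entropy bound, combined with the $L^p$ hypothesis on $g$ (to control the singularity of $\log g$ near its zeros) and the finite fourth moment (to control the tails where $|v|$ is large), gives enough uniform integrability along the weakly convergent family $P_1\mu^{(N)}\to P_1\mu_k$ to justify the limit, by a standard truncation argument splitting on $\{g(v)<\varepsilon\}\cup\{|v|>R\}$.
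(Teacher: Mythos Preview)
Your approach is genuinely different from the paper's. The paper never lifts to $\R^N$ or invokes superadditivity; instead it uses the Legendre dual formula
\[
H(\mu|\nu)=\sup_{\varphi\in C_b}\Bigl\{\int\varphi\,d\mu-\log\int e^{\varphi}\,d\nu\Bigr\},
\]
picks a near-optimal bounded continuous $\varphi$, sets $\Phi=\sum_j\varphi(v_j)$, and observes that
\[
\frac{H(\mu^{(N)}|\nu^{(N)})}{N}\ \ge\ \int\varphi\,dP_1\mu^{(N)}\ -\ \frac1N\log\frac{Z'_N(e^{\varphi}g,\sqrt N)}{Z'_N(g,\sqrt N)}.
\]
The first term passes to the limit by \emph{bounded} weak convergence, and the second is $o(1)$ because Theorem~\ref{thmasymp} shows both $Z'_N$'s are $O(1)$. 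No unbounded integrands ever appear.

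Your Gaussian step (lift plus superadditivity of relative entropy against a product reference) is correct and elegant. The gap is in the bootstrap. After subtracting identities you need
\[
\liminf_{N\to\infty}\int\log\psi\,dP_1\mu^{(N)}\ \le\ \int\log\psi\,dP_1\mu_k,\qquad \psi=g/\gamma,
\]
and your justification does not hold up. The $L^p$ and fourth-moment hypotheses are on $g$, not on the integrating measures $P_1\mu^{(N)}$; the only control you have on those is weak convergence and the exact constraint $\int v^2\,dP_1\mu^{(N)}=1$. If second-moment mass escapes to infinity (so that $\int v^2\,dP_1\mu_k=1-s<1$, which the hypotheses allow), then the $v^2/2$ part of $\log\psi$ contributes exactly $1/2$ on the left but only $(1-s)/2$ on the right. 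When $g$ has merely polynomial tails (permitted by $\int g v^4<\infty$), the $\log g$ term contributes $o(1)$ from the escaping mass and cannot compensate; the needed inequality fails by $s/2$, and your argument yields only $L\ge H(\mu_k|\nu^{\otimes k})/k - s/2$. The a~priori bound $H(\mu^{(N)}|[\nu^{\on}]_{\nSn})\le CN$ does not prevent this: it is precisely the identity you wrote down, so invoking it to control $\int\log g\,dP_1\mu^{(N)}$ is circular. There is a secondary issue when $g\notin L^\infty$: then $\log g$ is unbounded above, the identity relating $H(\mu^{(N)}|\sigma^N)$ to $H(\mu^{(N)}|[\nu^{\on}]_{\nSn})$ may read $\infty=\infty$, and the Gaussian inequality becomes vacuous. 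The paper's variational route with bounded $\varphi$ avoids all of this.
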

\med

The proof uses the results of Section~\ref{secrestr}, plus
a duality formula for the entropy:

\begin{Lem}[Legendre representation of the $H$ functional]
\label{lemlegH}
Let $\X$ be a locally compact complete metric space equipped with
a reference (Borel) probability measure $\nu$. Then,
for any other probability measure $\mu$ on $\X$,
\begeq\label{Hleg1}
H(\mu|\nu) = 
\sup\: \left\{\int \varphi\,\d\mu - 
\log\left( \int e^{\varphi}\,\d\nu\right); \ \varphi \in C_b(\X)\right\},
\endeq
where $C_b(\X)$ stands for the space of bounded continuous functions
on $\X$. 

Moreover, one can restrict the supremum in~\eqref{Hleg1} 
to those functions $\varphi$ such that $\int e^{\varphi}\,\d\nu=1$.
\end{Lem}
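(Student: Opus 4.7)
My plan is to establish the two inequalities separately. For the easy direction, $H(\mu|\nu)\geq \sup\{\cdots\}$, I would start from the elementary Fenchel--Young inequality
\[ xy \leq x\log x - x + e^{y}\qquad (x\geq 0,\ y\in\R),\]
which follows by minimizing the right--hand side in $x$ at fixed $y$. If $\mu$ is absolutely continuous with density $h=\d\mu/\d\nu$, applying this pointwise with $x=h$, $y=\varphi$ and integrating against $\nu$ gives
\[ \int \varphi\,\d\mu \leq H(\mu|\nu) - 1 + \int e^{\varphi}\,\d\nu,\]
and then the translation trick $\varphi\mapsto \varphi - \log\int e^{\varphi}\,\d\nu$ (which does not change the functional $\int\varphi\,\d\mu - \log\int e^{\varphi}\,\d\nu$) normalizes the exponential integral to $1$ and yields exactly $\int \varphi\,\d\mu - \log\int e^{\varphi}\,\d\nu \leq H(\mu|\nu)$. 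If $\mu\not\ll\nu$ the inequality is trivial. The same translation argument simultaneously justifies restricting the supremum to $\varphi$ with $\int e^{\varphi}\,\d\nu=1$.

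For the reverse inequality, I would distinguish the two cases. Suppose first that $\mu \ll \nu$ with density $h$ and $H(\mu|\nu)<+\infty$. The formal optimizer is $\varphi=\log h$ (which indeed yields $\int \varphi\,\d\mu - \log\int e^{\varphi}\,\d\nu = H(\mu|\nu)$), but $\log h$ need not be bounded or continuous, so I would approximate it by a sequence in $C_b(\X)$. The construction has two stages: first truncate to get a bounded measurable function $\psi_k := \log\bigl((h\wedge k)\vee (1/k)\bigr)$; second, regularize $\psi_k$ by a bounded continuous function $\varphi_{k,\epsilon}$ using Lusin's theorem (which is available because $\X$ is a locally compact complete metric space, hence Radon for $\nu$), arranging $\varphi_{k,\epsilon}\to \psi_k$ both $\nu$-a.e.\ and in $\mu$-measure while staying uniformly bounded by $\log k + |\log(1/k)|$. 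Dominated convergence, first in $\epsilon$ then in $k$ (using $H(\mu|\nu)<\infty$ to control the $\log h$ tail), drives $\int\varphi_{k,\epsilon}\,\d\mu\to \int h\log h\,\d\nu$ and $\int e^{\varphi_{k,\epsilon}}\,\d\nu\to 1$, giving the matching lower bound.

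The singular case $\mu\not\ll\nu$ is handled separately: choose a Borel set $A$ with $\nu(A)=0$ and $\mu(A)>0$. By Radon regularity, for each $\delta>0$ there exist an open $U\supset A$ with $\nu(U)<\delta$ and a compact $K\subset A$ with $\mu(K)\geq\mu(A)/2$, and then Urysohn's lemma furnishes $\chi\in C_b(\X)$ with $0\leq\chi\leq 1$, $\chi\equiv 1$ on $K$ and $\chi\equiv 0$ off $U$. Taking $\varphi = M\chi$ makes $\int \varphi\,\d\mu\geq M\mu(K)$ while $\log\int e^{\varphi}\,\d\nu \leq \log(1+e^M\delta)$, so picking $\delta=e^{-2M}$ and sending $M\to\infty$ forces the supremum to $+\infty=H(\mu|\nu)$, as required.

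The main obstacle is the approximation step in the absolutely continuous case: one must pass from the measurable optimizer $\log h$ to bounded continuous test functions without destroying either integral in the limit. This is precisely where the standing hypotheses on $\X$ (locally compact complete metric, hence $\nu$ is Radon and continuous functions are dense in $L^1$--type senses) are used. Everything else is a soft rearrangement of Young's inequality together with the translation invariance that underlies the final normalization claim.
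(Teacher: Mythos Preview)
Your proposal is correct and follows the standard Donsker--Varadhan route: Fenchel--Young for the upper bound, truncation plus Lusin for the lower bound in the absolutely continuous case, and a bump-function blow-up argument in the singular case. The normalization claim via translation invariance is exactly the ``homogeneity argument'' the authors have in mind.

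There is nothing substantive to compare with, because the paper does not prove this lemma: it declares formula~\eqref{Hleg1} to be folklore, points to an external reference for the compact case, and only remarks that the normalization statement follows from homogeneity. Your write-up therefore supplies what the paper omits. One small caveat worth tightening if you flesh this out: in the singular case you invoke inner regularity of $\mu$ (to find the compact $K\subset A$), which on a merely locally compact complete metric space is not automatic without separability or $\sigma$-compactness; in the paper's applications $\X$ is always Polish, so this is harmless, but you may want to note the extra hypothesis or route around it.
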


We skip the proof of Formula~\eqref{Hleg1}, which belongs to folklore
(see e.g.~\cite[Appendix~B]{LV} for a complete proof in the case
of a compact space $\X$). As for the last part of Lemma~\ref{lemlegH}, 
it follows from an easy homogeneity argument.

\begin{proof}[Proof of Theorem~\ref{thmuppergen'}]
Let $\nu^{(N)}:=[\nu^\on]_{\nSn}$. We first consider the case $k=1$.
Let $\var>0$ be given. By Lemma~\ref{lemlegH}, we can find
a bounded continuous function $\varphi$ such that
\[ \int e^{\varphi} g =1; \quad 
\int \varphi\,\d \mu \geq H(\mu|\nu)-\var.\]

On $\nSn$, consider the function
\[ \Phi(v_1,\ldots,v_N) = \varphi(v_1)+\ldots + \varphi(v_N).\]
By Lemma~\ref{lemlegH} again,
\begeq\label{H2t} \frac{H(\mu^{(N)}|\nu^{(N)})}{N} 
\geq \frac1N \int_{\nSn} \Phi(v)\, \d\mu^{(N)}(v)\ -\
\frac1N \log \left(\int e^{\Phi(v)}\d\nu^{(N)}(v)\right).
\endeq

The first term in the right-hand side of~\eqref{H2t} is
controlled by symmetry and convergence of the first marginal:
\begeq\label{H2t1} \frac1N \int_{\nSn} \Phi(v)\, \d\mu^{(N)}(v)
= \int_\R \varphi(v)\, \d(P_1\mu^{(N)})(v)
\ \xrightarrow[N\to\infty]{} \int_\R \varphi\,\d\mu.
\endeq
(Here we have used the continuity of $\varphi$.)

To estimate the second term in the right-hand side of~\eqref{H2t},
we note that
\begeq\label{intePhi} \int e^{\Phi(v)}\d\nu^{(N)}(v)=
\frac{Z_N(e^{\varphi}g,\sqrt{N})}{Z_N(g,\sqrt{N})}
= \frac{Z'_N(e^{\varphi}g,\sqrt{N})}{Z'_N(g,\sqrt{N})}.
\endeq
Since $\varphi$ is bounded above, we know that $e^{\varphi} g$ satisfies
the same estimates as $g$, which makes it possible to apply
Theorem~\ref{thmasymp}: with obvious notation,
\[ Z'_N(e^{\varphi}g,\sqrt{N}) =
\frac{\sqrt{2}}{\Sigma(e^{\varphi} g)}
\Bigl( e^{-\frac{N(1-E(e^{\varphi}g))^2}{2\Sigma(e^{\varphi} g)^2}}
+ o(1) \Bigr)= O (1).\]
Hence
\begeq\label{logZn1} \liminf_{N\to\infty}
\left(-\frac1N \log Z'_N(e^{\varphi}g,\sqrt{N}) \right)\geq 0.
\endeq
Similarly,
\[ Z'_N(g,\sqrt{N}) =
\frac{\sqrt{2}}{\Sigma(g)}
\Bigl( e^{-\frac{N(1-E(g))^2}{2\Sigma(g)^2}}
+ o(1) \Bigr) = \frac{\sqrt{2}}{\Sigma(g)} (1+o(1)),\]
so
\begeq\label{logZn2} \lim_{N\to\infty}
\left(\frac1N \log Z'_N(g,\sqrt{N}) \right)= 0.
\endeq
(This is where the assumption $\int g(x)x^2\,\d x=1$ is used.)

The combination of~\eqref{intePhi}, \eqref{logZn1} and~\eqref{logZn2}
implies
\[ \liminf_{N\to\infty} \left( -\
\frac1N \log \left(\int e^{\Phi(v)}\d\nu^{(N)}(v)\right)\right)
\geq 0.\]
Combining this with~\eqref{H2t1} and~\eqref{H2t}, we find
\[\liminf_{N\to\infty} \frac{H(\mu^{(N)}|\nu^{(N)})}{N} 
\geq \int_\R \varphi\,d\mu,\]
and by the choice of $\varphi$ this is no less than $H(\mu|\nu)-\var$.
Since $\var$ is arbitrarily small, the proof of Theorem~\ref{thmuppergen'}
is complete in the case $k=1$.
\med

Now the proof for general $k$ goes along the same lines: pick up
$\varphi=\varphi(v_1,\ldots,v_k)$ such that
\[ \int_{\R^k} e^{\varphi} g^{\otimes k} =1; \quad 
\int \varphi\,\d \mu_k \geq H(\mu_k|\nu^{\otimes k})-\var.\]

Define $m$ as the integer part of $N/k$.
On $\nSn$, consider the function
\[ \Phi(v_1,\ldots,v_N) = \varphi(v_1,\ldots,v_k)+
\varphi(v_{k+1},\ldots, v_{2k}) +\ldots +
\varphi(v_{(m-1)k+1},\ldots,v_{mk}). \]
Then~\eqref{H2t} is unchanged, and~\eqref{H2t1} transforms into
\begeq\label{H2t1'} \frac1N \int_{\nSn} \Phi(v)\, \d\mu^{(N)}(v)
= \left(\frac{m}{N}\right)\int_{\R^k} \varphi(v)\, \d(P_k\mu^{(N)})(v)
\ \xrightarrow[N\to\infty]{} \frac1k \int_{\R^k} \varphi\,\d\mu_k.
\endeq

Also equation~\eqref{logZn2} is unchanged; but there
is a subtlety with equation~\eqref{logZn1}, which cannot
be directly interpreted in terms of constrained tensor product.
So write $N=km+q$ ($0\leq q\leq k-1$), and
\[ x=(y,z),\qquad y=(x_1,\ldots,x_{km}),\qquad
z=(x_{km+1},\ldots,x_N).\]
By using polar changes of variables (successively for $x$ and $y$) and a test
function argument, we see that
\begin{multline*} \int_{S^{N-1}(\sqrt{N})} F(y,z)\,\d \sigma^N_r(y,z) \ \\
= \
\int_{\R^q} \frac{r}{\sqrt{r^2-|z|^2}} \left(
\int_{S^{km-1}(\sqrt{r^2-|z|^2})} F(y,z)
\:\frac{|S^{km-1}(\sqrt{r^2-|z|^2})|}{|S^{N-1}(r)|}\:\d
\sigma^{km}_{\sqrt{r^2-|z|^2}}\right)\,\d z,
\end{multline*}
where the integral is restricted to the region $|z|\leq r$.
Then we recognize that
\[ \int_{S^{km-1}(\sqrt{r^2-|z|^2})}
e^\Phi(y)\, \left(\frac{g}{\gamma}\right)^{\otimes km}(y)\:
\d\sigma^{km-1}_{\sqrt{r^2-|z|^2}}(y)\
=\ Z'_m\bigl(e^{\varphi}g^{\otimes m},\, \sqrt{r^2-|z|^2}\bigr).\]
So in the end
\begin{multline}\label{repePhi}
\int_{\nSn} e^{\Phi}\, g^\on\, \d\sigma^N \\ =\
\int_{\R^q} \sqrt{\frac{N}{N-|z|^2}}\
\frac{|S^{km-1}(\sqrt{N-|z|^2})}{|S^{N-1}(\sqrt{N})|}\:
\left(\frac{g}{\gamma}\right)^{\otimes q}(z)\
Z'_m\bigl(e^{\varphi} g,\,\sqrt{N-|z|^2}\bigr)\,\d z,
\end{multline}
where the integral is restricted to the region $|z|\leq \sqrt{N}$.
To conclude along the lines of the case $k=1$, it is sufficient to
show that the expression~\eqref{repePhi} is uniformly bounded as
$N\to\infty$ ($\varphi$ and $g$ being fixed). Since there are
a finite number of possible values for $q$, we might also assume
that $q$ is fixed.

Now use the formulas
\[ Z'_m\bigl(e^{\varphi} g,\,\sqrt{N-|z|^2}\bigr)
= \left(\frac{\alpha_{km}(km)}{\alpha_{km}(N)}\right)\times O(1),\]
\[ |S^{N-1}(r)| = \frac{(2\pi)^{N/2}r^{N-1}}{\sqrt{\pi}\alpha_N(N)}
(1+o(1))\]
to estimate~\eqref{repePhi}: after some computations, one finds
\begeq\label{repePhi2} \int_{\nSn} e^{\Phi}\, g^\on\, \d\sigma^N\leq
\frac{O(1)}{(2\pi)^{q/2}} \int_{\R^q} \left(1-
\frac{|z|^2}{N}\right)_+^{\frac{km}2-1} \,
\left(\frac{g}{\gamma}\right)^{\otimes q}\,\d z.\endeq
Next, if $|z|\leq\sqrt{N}$, then
${\displaystyle  \left(1-\frac{|z|^2}{N}\right)^N\leq e^{-|z|^2}}$,
so
\[ \left (1-\frac{|z|^2}{N}\right)_+^{\frac{km}2-1} \leq
e^{-\frac{|z|^2}2 \left(\frac{km-2}{N}\right)}1_{|z|\leq \sqrt{N}}
= e^{-\frac{|z|^2}{2}\left(1- \frac{2+q}N\right)} 1_{|z|\leq\sqrt{N}}.\]
Plug this in~\eqref{repePhi2} to obtain
\begin{align*} \int_{\nSn} e^{\Phi}\, g^\on\, \d\sigma^N & \leq
O(1) \int_{|z|\leq \sqrt{N}} e^{\frac{|z|^2}{N}\left(\frac{2+q}{N}\right)}\,
g^{\otimes q}(z)\,\d z\\
& \leq O(1) e^{1+\frac{q}2}\int_{\R^q} g^{\otimes q}(z)\,\d z
= O(1).
\end{align*}
With this estimate in hand, there is no difficulty to conclude the
proof of Theorem~\ref{thmuppergen'}.
\end{proof}

\section{Convergence of marginals} \label{secmarginals}

This section is devoted to the convergence of finite-dimensional
marginals under various entropy assumptions. In the first subsection,
we show that (with loose notation)
the natural condition $H(\mu^{(N)}|\mu^{\otimes N})=o(N)$
implies that $\mu^{(N)}$ is $\mu$-chaotic. In the second subsection,
we show that at least in the case when $\mu^{(N)}$ is the constrained
tensor product, then the convergence of the finite-dimensional marginals
holds in a stronger sense, namely in relative entropy
(and as a consequence in total variation).

\subsection{From entropy estimates to chaos}

\begin{Thm}[Entropic closeness to the constrained tensor product implies chaos]
\label{thmentrchaos}
Let $\nu(dv)=g(v)\,\d v$ be a probability measure on $\R$, such that
\[ \int g(v)\,v^2\,\d v=1,\qquad \int g(v)\,v^4\,\d v<+\infty,\qquad
g\in L^p(\R)\quad (p>1),\]
and let $\nu^{(N)}=[\nu^{\on}]_{\nSn}$ be the constrained
tensor product of $\nu$ on $\nSn$. Let further $\mu^{(N)}$ be a symmetric
probability measure on $\nSn$, such that
\[ \frac{H(\mu^{(N)}|\nu^{(N)})}{N} \xrightarrow[N\to\infty]{} 0.\]
Then, $\mu^{(N)}$ is $\nu$-chaotic. More precisely, for each $k$,
the marginal $P_k\mu^{(N)}$ converges weakly
(against bounded continuous test functions) to $\nu^{\otimes k}$.
\end{Thm}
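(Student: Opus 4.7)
The plan is to reduce the chaos statement to the asymptotic lower semi-continuity result Theorem~\ref{thmuppergen'}, via a standard tightness-plus-subsequence argument. What must be shown is that for each fixed positive integer $k$, the marginal $P_k\mu^{(N)}$ converges weakly to $\nu^{\otimes k}$ as $N\to\infty$; symmetry is already built into the hypotheses, so condition $(i)$ of Definition~\ref{defchaos} is automatic.

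First I would verify tightness of $\{P_k\mu^{(N)}\}_{N\in\N}$ on $\R^k$. Since $\mu^{(N)}$ is supported on $\nSn$, the identity $\sum_{j=1}^N v_j^2=N$ holds $\mu^{(N)}$-a.s., and symmetry gives the uniform bound
\[
\int_{\R^k}|v|^2\,dP_k\mu^{(N)}(v)\;=\;k.
\]
A Chebyshev/Prokhorov argument then shows that $\{P_k\mu^{(N)}\}$ is relatively compact in the topology of weak convergence against bounded continuous functions.

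Next I would identify every subsequential weak limit with $\nu^{\otimes k}$. Let $P_k\mu^{(N_j)}\to \mu_k$ weakly along some subsequence. The second half of Theorem~\ref{thmuppergen'} applies directly, since $\nu(dv)=g(v)\,dv$ satisfies the standing assumptions on the density (unit second moment, finite fourth moment, $L^p$ for some $p>1$), yielding
\[
\frac{H(\mu_k\,|\,\nu^{\otimes k})}{k}\;\leq\;\liminf_{j\to\infty}\frac{H(\mu^{(N_j)}\,|\,\nu^{(N_j)})}{N_j}\;=\;0,
\]
by the hypothesis $H(\mu^{(N)}|\nu^{(N)})/N\to 0$. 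Since relative entropy vanishes only when the two measures coincide, $\mu_k=\nu^{\otimes k}$. As every subsequential limit is the same measure, the entire sequence $P_k\mu^{(N)}$ converges weakly to $\nu^{\otimes k}$, which is condition $(ii)$ of Definition~\ref{defchaos}.

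Essentially all the real work is already packaged in Theorem~\ref{thmuppergen'}, itself based on the sharp asymptotics of $Z_N(f,\sqrt{N})$ obtained in Theorem~\ref{thmasymp}; once those are available, the present step is soft and I do not foresee any serious obstacle. The only mildly delicate point is to ensure that tightness yields the Polish-space weak convergence required as input to Theorem~\ref{thmuppergen'}, but the uniform second moment bound above takes care of this without needing anything sharper.
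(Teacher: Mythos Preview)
Your proposal is correct and follows essentially the same route as the paper's proof: tightness of $P_k\mu^{(N)}$ via a second-moment bound, Prokhorov, then Theorem~\ref{thmuppergen'} to force every subsequential limit to equal $\nu^{\otimes k}$. Your tightness step is in fact slightly cleaner than the paper's, which groups the $N$ coordinates into $m=\lfloor N/k\rfloor$ blocks to obtain $\int|v|^2\,dP_k\mu^{(N)}\le N/m\to k$, whereas you use symmetry directly to get the exact value $k$.
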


\begin{proof}
We first claim that, for given $k$, the sequence $P_k\mu^{(N)}$ is
tight. Indeed, let $m$ be the integer part of $N/k$, then since $N = \int |x|^2\,\d\mu^{(N)}(x) $,
\[ N  \geq
\int (x_1^2+\ldots +x_k^2) \,\d\mu^{(N)}(x) + \ldots +
\int (x_{(m-1)k+1}^2+\ldots+ x_{mk}^2)\,\d\mu^{(N)}(x),\]
and by symmetry the latter expression is
${\displaystyle m \int_{\R^k} |x|^2\,\d(P_k\mu^{(N)})(x)}$.
It follows that
${\displaystyle\int_{\R^k} |x|^2\,\d(P_k\mu^{(N)})(x) \leq \frac{N}{m}}$
which converges to $k$ as $N\to\infty$.

By Prokhorov's theorem, $P_k\mu^{(N)}$ converges, possibly up
to extraction of a subsequence, to some probability measure
$\mu_k$ on $\R^k$. From Theorem~\ref{thmuppergen'},
\[ \frac{H(\mu_k|\nu^{\otimes k})}{k} \leq
\liminf_{N\to\infty} \frac{H(\mu^{(N)}|[\nu^{\otimes N}]_{\nSn})}{N} =0.\]
It follows that $\mu_k=\nu^{\otimes k}$, so the whole sequence
$P_k\mu^{(N)}$ does converge to $\nu^{\otimes k}$, and
$(\mu^{(N)})$ is indeed $\nu$-chaotic.
\end{proof}

\subsection{Marginals of the constrained tensor product}

As an obvious consequence of Theorems~\ref{thmuppergen'} and ~\ref{thmentrchaos}, the constrained
tensor product $\mu^{(N)}$ of $\mu$ is itself $\mu$-chaotic.
We shall show in this section a stronger result:
$P_k\mu^{(N)}$ converges to $\mu^{\otimes k}$ in total variation,
and even in relative entropy.

\begin{Thm}[Property $(ii^\prime)$ for the constrained tensor product]
\label{ai} 
Let $\mu(\d v)= f(v)\,d v$ be a probability measure on $\R$, such that
$f\in L^p(\R)$ for some $p>1$, and $\int_\R v^4f(v)\d v < \infty$.
Let $\mu^{(N)} = [\mu^{\otimes N}]_{S^{N-1}(\sqrt{N})}$ be
the restricted $N$-fold tensor product of $\mu$.
Then for all positive integers $k$,
$$\lim_{N\to\infty} H\bigl(P_k\mu^{(N)} | \mu^{\otimes k}\bigr) = 0\ .$$
\end{Thm}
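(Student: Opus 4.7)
The strategy is to exhibit the Radon--Nikodym density of $P_k\mu^{(N)}$ with respect to $\mu^{\otimes k}$ in a Bayesian form, show it converges pointwise to $1$, and conclude via dominated convergence using a uniform sup-norm bound furnished by the local central limit theorem.

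To begin, I would apply the polar disintegration of $\sigma^N$ recorded in Example~\ref{uniform} to the density $\prod f(v_j)/Z_N(f,\sqrt N)$ of $[\mu^{\otimes N}]_{\nSn}$: on $\{|v_{[k]}|^2 < N\}$ the marginal $P_k\mu^{(N)}$ has Lebesgue density
\[ \rho_N^k(|v_{[k]}|^2)\,\prod_{j=1}^k f(v_j)\cdot \frac{Z_{N-k}\bigl(f,\sqrt{N-|v_{[k]}|^2}\bigr)}{Z_N(f,\sqrt N)},\]
where $\rho_N^k$ is the explicit density recorded in Example~\ref{uniform}. Rewriting $Z_M(f,\sqrt u) = 2 s_M(u)/(|S^{M-1}|\,u^{M/2-1})$ by Lemma~\ref{lemZn}, with $s_M = h^{\ast M}$ the density of $V_1^2+\cdots+V_M^2$ for $V_j$ iid $\sim\mu$, a direct calculation collapses the factors $(N-|v_{[k]}|^2)^{(N-k-2)/2}$ and the sphere volumes, leaving the remarkably clean identity
\[ r_N(v_{[k]}) := \frac{dP_k\mu^{(N)}}{d\mu^{\otimes k}}(v_{[k]}) = \frac{s_{N-k}(N-|v_{[k]}|^2)}{s_N(N)}\qquad\text{on }\{|v_{[k]}|^2<N\}.\]
This is just the Bayesian density of $(V_1,\ldots,V_k)$ given $\sum_{j=1}^N V_j^2 = N$.

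Next, as in the proof of Theorem~\ref{thmasymp}, the hypotheses force the density $h$ of $V^2$ to have finite variance $\Sigma^2$ and to lie in $L^q(\R)$ for some $q>1$. The quantitative local central limit theorem of the appendix therefore applies to $h$ and produces the uniform estimate
\[ \sup_{u\in\R}\left|h^{\ast M}(u) - \frac{1}{\sqrt{2\pi M\Sigma^2}}\,e^{-(u-M)^2/(2M\Sigma^2)}\right| = o(1/\sqrt M).\]
Evaluating at $M=N$, $u=N$ gives $s_N(N)\sim 1/\sqrt{2\pi N\Sigma^2}$; at $M=N-k$ and $u=N-|v_{[k]}|^2$ (for fixed $v_{[k]}$) the same bound yields $s_{N-k}(N-|v_{[k]}|^2)\sim 1/\sqrt{2\pi N\Sigma^2}$. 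Hence $r_N(v_{[k]})\to 1$ pointwise. The very same uniform estimate produces constants $c,C>0$ with $\sup_u s_M(u)\le C/\sqrt M$ for $M\in\{N,N-k\}$ and $s_N(N)\ge c/\sqrt N$ as soon as $N$ is large, so $r_N$ is bounded by $C/c$ uniformly on its support.

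Because $x\mapsto x\log x$ is bounded on $[0,C/c]$, $|r_N\log r_N|$ is dominated by a deterministic constant, and $\mu^{\otimes k}$ is a probability measure, so dominated convergence delivers
\[ H\bigl(P_k\mu^{(N)}\big|\mu^{\otimes k}\bigr) = \int r_N\log r_N\, d\mu^{\otimes k} \ \xrightarrow[N\to\infty]{}\ 0.\]
The one genuinely delicate step is the uniform sup-norm bound on $s_M = h^{\ast M}$ with the correct $1/\sqrt M$ rate; this is precisely the output of the appendix's quantitative local central limit theorem, and it is here that the $L^p$ hypothesis enters essentially, consistently with the discussion following Theorem~\ref{propct} where it is noted that a finite-entropy bound alone is not enough even to ensure continuity of $s_M$.
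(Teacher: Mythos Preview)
Your proof is correct and follows the same high-level strategy as the paper: exhibit the Radon--Nikodym derivative $r_N = \d P_k\mu^{(N)}/\d\mu^{\otimes k}$, use the local central limit theorem to get pointwise convergence $r_N\to 1$ together with a uniform sup bound, and finish by dominated convergence. The substantive difference is the representation you choose for $r_N$. The paper works through the Gaussian-normalized partition functions $Z'_N$, splitting $r_N$ into a factor $Z'_{N-k}(f,\sqrt{N-s^2})/Z'_N(f,\sqrt{N})$ and a second factor $\d(P_k\sigma^N)/\d\gamma^{\otimes k}$, and then bundles these into an auxiliary $\Psi_N$ whose control requires both Theorem~\ref{thmasymp} and Stirling-type estimates on the spherical marginals. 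You instead collapse everything to the Bayesian form $r_N = s_{N-k}(N-|v_{[k]}|^2)/s_N(N)$, which is shorter and makes the role of the local CLT completely transparent: the uniform bound $\sup_u h^{\ast M}(u)\le C/\sqrt{M}$ and the lower bound $h^{\ast N}(N)\ge c/\sqrt{N}$ drop out directly, with no need to track the Gaussian reference or the $\alpha_N$ factors. What the paper's decomposition buys is that it recycles Theorem~\ref{thmasymp} wholesale, keeping the presentation uniform with the rest of the paper; what your route buys is a cleaner, more self-contained argument that isolates exactly where the $L^p$ hypothesis is consumed.
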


\begin{proof} Let $[f^\on]_{\nSn}$ stand for the density of
the constrained tensor product, with respect to the uniform probability
measure $\sigma^N$. Fix any integer $k$. Then for all $N$ sufficiently large, 
$$[f^{\otimes N}]_{S^{N-1}(\sqrt{N})} = 
\left(\prod_{j=1}^k(f(v_j)/\gamma(v_j))\right)\frac{
\prod_{j=k+1}^N(f(v_j)/\gamma(v_j))}{Z'_N(f,\sqrt{N})}$$
With the notation $s^2=\sum_{j=1}^k v_j^2$, this expression
can be rewritten as
$$
[f^{\otimes N}]_{S^{N-1}(\sqrt{N})} = 
\left(\prod_{j=1}^k(f(v_j)/\gamma(v_j))\right)
\frac{ Z'_{N-k} (f,\sqrt{N-s^2})} {Z'_N(f,\sqrt{N})}\
\frac{\prod_{j=k+1}^N(f(v_j)/\gamma(v_j))} {Z'_{N-k}(f,\sqrt{N-s^2})}\ .$$
Therefore,
${\displaystyle P_k\left(\mu^{(N)}\right) = 
\left(\prod_{j=1}^k(f(v_j)/\gamma(v_j))\right)
\frac{ Z'_{N-k} (f,\sqrt{N-s^2})} {Z'_N(f,\sqrt{N})}\ P_k(\sigma^N)}$.

As a consequence, with ${\cal L}_k$ standing for the
$k$-dimensional Lebesgue measure,
\begin{align*}
H(P_k\mu^{(N)}|\mu^{\otimes k}) &= 
\int_{\R^k}\bigl(\log \frac{\d P_k(\mu^{(N)})}{\d {\cal L}_k} - 
\log f^{\otimes k}\bigr)\, \d (P_k\mu^{(N)})\\
&= \int_{\R^k}  \log\left(\frac{ Z'_{N-k} (f,\sqrt{N-s^2})} 
{Z'_{N-k}(f,\sqrt{N})}\right)\,\d(P_k\mu^{(N)})\\
& \qquad 
+  \int_{\R^k}\log\frac{\d (P_k\sigma^N)}{\d \gamma^{\otimes k}}
\,\d(P_k\mu^{(N)})\\
\end{align*}
{From} Theorem~\ref{thmasymp} and some computation,
$$\log\left(\frac{ Z'_{N-k} (f,\sqrt{N-s^2})} {Z'_{N-k}(f,\sqrt{N})}\right) 
= \left( e^{- s^4/(2\theta N)}\right)(1+o(1))$$
and so
$$H(P_k \mu^{(N)} |f^{\otimes k}) = 
\int_{\R^k}\log\frac{\d (P_k\sigma^N)}{\d \gamma^{\otimes k}}
\,\d(P_k\mu^{(N)}) + o(1).$$
In other words,
\begeq\label{otherHPk}
H(P_k\mu^{(N)}|\mu^{\otimes k}) = \int \Psi_N(y)\,f^{\otimes k}(y)\,
\d y \ + o(1),
\endeq
where
\begeq\label{PsiN} \Psi_N(y):=
\left( \frac{P_k(\sigma^N) } {\gamma^{\otimes k}}\right) \log 
 \left(\frac{P_k(\sigma^N) }{\gamma^{\otimes k}}\right) 
  \left(\frac{ Z'_{N-k} (f,\sqrt{N-|y|^2})} {Z'_{N-k}(f,\sqrt{N})}\right).
\endeq

Now let us derive some estimates on $\Psi_N$.
By direct computation,
\begeq (P_k\sigma^N)(\d y) = \frac{|S^{N-k-1}|}{N^{k/2}|S^{N-1}|}
\left(1 - \frac{|y|^2}{N}\right)^{(N-k-2)/2}\,\d y.
\endeq
By an application of Stirling's formula, and some computation again,
\begeq\label{csqSt} \frac{\d(P_k\sigma^N)}{\d \gamma^{\otimes k}}(y)
\leq (1+o(1))e^{\frac{(k+1)|y|^2}{N}}1_{|y|\leq \sqrt{N}}
\leq (1+o(1)) e^{k+1} = O(1).
\endeq
On the other hand, Theorem~\ref{thmasymp} implies that
the ratio of the $Z'$ terms in~\eqref{PsiN} is uniformly bounded.
We conclude that $\Psi_N(y)$ itself is bounded above, uniformly
in $N$ and $y$. This makes it possible to apply the dominated 
convergence theorem, in the form
\[ \limsup_{N\to\infty} \int \Psi_N(y)\, f^{\otimes k}(y)\,\d y
\leq \int \left(\limsup_{N\to\infty}\Psi_N(y)\right)\,
f^{\otimes k}(y)\,\d y.\]

It follows from~\eqref{csqSt} and Theorem~\ref{thmasymp} that
for any $y\in\R^k$, 
\[\begin{cases}
\dps \frac{\d P_k\sigma^N}{\d \gamma^{\otimes k}}(y)\longrightarrow 1\\ \\
\dps \frac{ Z'_{N-k} (f,\sqrt{N-|y|^2})} {Z'_{N-k}(f,\sqrt{N})}
\longrightarrow 1\end{cases}\]
as $N\to\infty$. So $\lim \Psi_N(y)=0$, and as a consequence
${\displaystyle  \limsup_{N\to\infty} \int\Psi_N(y) \, f^{\otimes k}(y)\,\d y\leq 0}$,
so by~\eqref{otherHPk},
${\displaystyle \limsup_{N\to\infty} H(P_k\mu^{(N)}| \mu^{\otimes k}) \leq 0}$.
This concludes the proof of Theorem~\ref{ai}.
\end{proof}

\section{From microscopic to macroscopic entropy} \label{secmicromacro}

Now comes one of the main results of this paper.

\begin{Thm}\label{thmmicromacro}
Let $f$ be a probability density on $\R$, such that
\[ \int f(v)\,v^2\,\d v=1\qquad
\int f(v)\,v^4\,\d v <+\infty\qquad
f\in L^\infty(\R). \]
Let $\nu(\d v)=f(v)\,d v$, and let
$\nu^{(N)}=[\nu^\on]_{\nSn}$ be the constrained $N$-fold
tensor product of $\nu$. For each $N$, let further
$\mu^{(N)}$ be a probability density on $\nSn$ such that
\[ \frac{H(\mu^{(N)}|\nu^{(N)})}{N}\xrightarrow[N\to\infty]{} 0.\]
Then
\[ \frac{H(\mu^{(N)}|\sigma^N)}{N}\xrightarrow[N\to\infty]{}
H(\nu|\gamma).\]
\end{Thm}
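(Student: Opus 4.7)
The plan is to reduce the theorem, via the chain rule for relative entropy, to the convergence of a single scalar integral, and then bound that integral from above and below separately. Since $\mu^{(N)}\ll\nu^{(N)}\ll\sigma^N$, the chain rule gives
\[
H(\mu^{(N)}|\sigma^N) \;=\; H(\mu^{(N)}|\nu^{(N)}) \,+\, \int_{\nSn}\log\frac{\d\nu^{(N)}}{\d\sigma^N}\,\d\mu^{(N)}.
\]
Using Definition~\ref{defcpd}, $\d\nu^{(N)}/\d\sigma^N = \prod_{j=1}^N (f/\gamma)(v_j)/Z'_N(f,\sqrt N)$, and the symmetry of $\mu^{(N)}$ (inherited from the symmetric setting in which this theorem is applied within Theorem~\ref{thmsuffschaos}), this becomes
\[
\frac{H(\mu^{(N)}|\sigma^N)}{N}
\;=\; \frac{H(\mu^{(N)}|\nu^{(N)})}{N}
\,+\, \int_\R\log\frac{f(v)}{\gamma(v)}\,\d P_1\mu^{(N)}(v)
\,-\, \frac{\log Z'_N(f,\sqrt N)}{N}.
\]
The first term tends to $0$ by hypothesis, and by Theorem~\ref{thmasymp} we have $Z'_N(f,\sqrt N)\to\sqrt 2/\Sigma$ (a finite positive constant), so the third term tends to $0$ as well. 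Thus the theorem is equivalent to
\[
\int_\R \log(f/\gamma)\,\d P_1\mu^{(N)} \xrightarrow[N\to\infty]{} H(\nu|\gamma).
\]

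The lower bound comes for free from the earlier results. By Theorem~\ref{thmentrchaos}, the hypothesis $H(\mu^{(N)}|\nu^{(N)})/N\to 0$ already implies that $\mu^{(N)}$ is $\nu$-chaotic, so in particular $P_1\mu^{(N)}\to\nu$ weakly. Theorem~\ref{thmuppergen} then gives $H(\nu|\gamma)\le\liminf H(\mu^{(N)}|\sigma^N)/N$, and combining this with the decomposition above yields $H(\nu|\gamma)\le\liminf\int\log(f/\gamma)\,\d P_1\mu^{(N)}$.

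For the upper bound I rely on two moment features: (i) the energy constraint $\sum v_j^2=N$ on $\nSn$ forces $\int v^2\,\d P_1\mu^{(N)}=1$ exactly, so weak convergence with matching second moments implies that $v^2$ is uniformly integrable with respect to the family $\{P_1\mu^{(N)}\}$; (ii) $f\in L^\infty$ yields $\log(f/\gamma)(v)\le \log\|f\|_\infty + v^2/2 + \tfrac12\log(2\pi)$. For $M>0$ set $\phi_M(v):=\max\bigl(\log(f/\gamma)(v),\,-M\bigr)$. Then $\phi_M\ge\log(f/\gamma)$, $\phi_M\downarrow \log(f/\gamma)$ as $M\to\infty$, and $\phi_M$ is dominated above by the $\nu$-integrable function displayed in (ii). Approximating $\phi_M$ from above by continuous bounded functions and using weak convergence together with the uniform integrability of $v^2$, one obtains $\limsup_N\int \phi_M\,\d P_1\mu^{(N)}\le \int\phi_M\,\d\nu$. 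Monotone convergence then gives $\int\phi_M\,\d\nu\downarrow H(\nu|\gamma)$ as $M\to\infty$ (using $H(\nu|\gamma)<\infty$, which follows from the hypotheses on $f$). Combining with $\int\log(f/\gamma)\,\d P_1\mu^{(N)}\le\int\phi_M\,\d P_1\mu^{(N)}$ closes the upper bound.

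The main obstacle is the upper-bound step, specifically turning weak convergence of $P_1\mu^{(N)}$ into convergence of integrals of the possibly-discontinuous, unbounded-below integrand $\log(f/\gamma)$. I expect to handle this by a mollification argument: replace $f$ with a smooth approximant $f_\varepsilon$, prove the statement in the continuous case where the approximation of $\phi_M$ by continuous functions from above is automatic, and then pass to the limit $\varepsilon\to 0$ using $L^\infty$-stability of $f_\varepsilon$, continuity of $\Sigma$ and $E$ in $\varepsilon$, and the convergence of $H(f_\varepsilon\,\d v|\gamma)\to H(\nu|\gamma)$. Every other ingredient in the argument, including the asymptotics of $Z'_N$ and Theorem~\ref{thmuppergen}, is stable under this approximation.
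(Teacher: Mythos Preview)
Your decomposition via the chain rule, the reduction to the one-variable integral $\int\log(f/\gamma)\,\d P_1\mu^{(N)}$, and the lower bound via Theorem~\ref{thmuppergen} are exactly what the paper does. For the upper bound with continuous $f$, your truncation $\phi_M$ together with uniform integrability of $v^2$ (which you correctly extract from the exact constraint $\int v^2\,\d P_1\mu^{(N)}=1$) is a perfectly valid variant of the paper's device of replacing $\log f$ by the bounded function $\log(f+\delta)$.

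The gap is in your treatment of discontinuous $f$. Mollifying $f$ to a smooth $f_\varepsilon$ does not work as stated: if you replace $f$ by $f_\varepsilon$ throughout, the reference measure $\nu^{(N)}$ changes to $\nu_\varepsilon^{(N)}=[f_\varepsilon^{\otimes N}]_{\nSn}$, and your hypothesis $H(\mu^{(N)}|\nu^{(N)})/N\to 0$ no longer applies---indeed, $H(\mu^{(N)}|\nu_\varepsilon^{(N)})/N = H(\mu^{(N)}|\nu^{(N)})/N + \int\log(f/f_\varepsilon)\,\d P_1\mu^{(N)} + o(1)$, and the middle term is precisely the kind of integral you are trying to control. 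If instead you mollify only the integrand, you would need $\log f \le \log f_\varepsilon + o_\varepsilon(1)$ pointwise (equivalently $f\le f_\varepsilon\,e^{o_\varepsilon(1)}$), which standard mollification does not give for an arbitrary $L^\infty$ density. Nor can you approximate $\phi_M$ from above by continuous functions in general, since $\log f$ need not be upper semicontinuous.

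The paper closes this gap differently: it first observes, from the very decomposition you wrote, that $H(\mu^{(N)}|\sigma^N)/N\le \log\|f\|_\infty + \tfrac{1+\log 2\pi}{2}+o(1)$ is bounded. It then invokes a subadditivity inequality for the entropy on spheres,
\[
\frac{H(P_k\mu^{(N)}|P_k\sigma^N)}{k} \le 2\,\frac{H(\mu^{(N)}|\sigma^N)}{N},
\]
to get $H(P_1\mu^{(N)}|\gamma)=O(1)$ uniformly in $N$. By the Dunford--Pettis criterion, the densities of $P_1\mu^{(N)}$ form a weakly relatively compact set in $L^1$, and since the weak limit against $C_b$ test functions is $f$, one actually has $\int\psi\,\d P_1\mu^{(N)}\to\int\psi\, f\,\d v$ for every bounded \emph{measurable} $\psi$. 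Applying this to $\psi=\log(f+\delta)$ and then letting $\delta\to 0$ finishes the upper bound. So the missing ingredient is not mollification but an a~priori entropy bound on the marginals, strong enough to upgrade weak convergence to $L^1$-weak convergence.
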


\begin{Rk} \label{rkwL8} During the proof, we shall show that the convergence
of the marginals $P_k\mu^{(N)}$ actually holds true in the sense
of weak convergence against bounded measurable functions
(as opposed to bounded continuous functions). We do not know whether
it holds true in the sense of, say, total variation.
\end{Rk}

\begin{proof}[Proof of Theorem~\ref{thmmicromacro}]
First we write
\begin{align*} H(\mu^{(N)}|\sigma^N) & = 
\int \log\frac{\d\mu^{(N)}}{\d\sigma^N}\,d\mu^{(N)} \\
& = \int \log\frac{\d\mu^{(N)}}{\d\nu^{(N)}}\,\d\mu^{(N)}
\ + \ \int \log\frac{\d\nu^{(N)}}{\d\sigma^N}\,\d\mu^{(N)}\\
& = H(\mu^{(N)}|\nu^{(N)}) + 
\int \log\left(\frac{f}{\gamma}\right)^{\on}\,\d\mu^{(N)}
- \log (Z'_N) \\
& = o(N) + N \int \log f(v_1)\,\d\mu^{(N)}(v) -
\int \log \gamma^{\on}\,\d\mu^{(N)} - \log Z'_N(f,\sqrt{N})\\
& = o(N) + N \int \log f(v_1)\,\d\mu^{(N)}(v)
+ N \left(\frac{1+\log (2\pi)}{2}\right)\ .
\end{align*}
In the next to last step, we have used  Theorem~\ref{thmasymp}, which implies that
$\log Z'_N(f,\sqrt{N})$ converges to a positive limit as $N\to\infty$, and so may be absorbed into the $o(N)$ term.
Also, in the last step we have replaced $\gamma^{\on}$ by its explicit
expression on $\nSn$.

Then, after division by $N$, we find
\begin{align*}
\frac{H(\mu^{(N)}|\sigma^N)}N & = \int \log f(v_1)\,\d\mu^{(N)}(v)
+ \left(\frac{1+\log (2\pi)}{2}\right) + o(1)\\
& = \int_{\R} \log f(v_1)\,\d(P_1\mu^{(N)}) (v_1)
+ \left(\frac{1+\log (2\pi)}{2}\right) + o(1).
\end{align*}

For any $\delta>0$, we have therefore
\begeq\label{step1'}
\frac{H(\mu^{(N)}|\sigma^N)}N \leq
\int \log (f(v_1)+\delta)\,\d(P_1\mu^{(N)})(v_1)
+ \left(\frac{1+\log (2\pi)}{2}\right) + o(1).
\endeq

Assume for the moment that $f$ is continuous. Then $\log (f+\delta)$ 
is a bounded continuous function, so we can pass to the limit, using the
weak convergence of $P_1\mu^{(N)}$ to $\nu(\d v)=f(v)\,d v$
(Theorem~\ref{thmentrchaos}), and deduce
\begeq\label{step1''}
\limsup_{N\to\infty} \frac{H(\mu^{(N)}|\sigma^N)}N \leq
\int f(v_1) \log (f(v_1)+\delta)\,\d v_1
+ \left(\frac{1+\log (2\pi)}{2}\right).
\endeq
By dominated convergence, we can now let $\delta\to 0$, and recover
\[ \limsup_{N\to\infty} \frac{H(\mu^{(N)}|\sigma^N)}N \leq
\int f(v_1) \log f(v_1)\,\d v_1
+ \left(\frac{1+\log (2\pi)}{2}\right).\]
Since $\int f(v)v^2\,\d v=1$, it is easy to check that the latter
expression coincides with $H(\nu|\gamma)$. The conclusion is that
\begeq\label{limsupentr1}
\limsup_{N\to\infty} \frac{H(\nu^{(N)}|\sigma^N)}N \leq
H(\nu|\gamma).
\endeq
On the other hand, by Theorem~\ref{thmuppergen'}, applied with
$g=\gamma$,
\begeq\label{limsupentr2}
H(\mu|\gamma) \leq \liminf_{N\to\infty} \frac{H(\mu^{(N)}|\sigma^N)}N.
\endeq
The combination of~\eqref{limsupentr1} and~\eqref{limsupentr2} concludes
the proof of Theorem~\ref{thmmicromacro}.
\end{proof}
\med

Now, let us prove the more general statement alluded to in Remark~\ref{rkwL8}.
We start again from~\eqref{step1'}, and deduce that
\[ \frac{H(\mu^{(N)}|\sigma^N)}N \leq \log \|f\|_{L^\infty}
+ \left(\frac{1+\log (2\pi)}{2}\right) + o(1),\]
which is bounded as $N\to\infty$.
This bound can be combined with the exact (not asymptotic) inequality
\begin{equation}\label{more}
 \frac{H(P_k\mu^{(N)}|P_k\sigma^N)}{k} \leq
2\ \frac{H(\mu^{(N)}|\sigma^N)}{N},
\end{equation} to obtain
\[ H(P_k\mu^{(N)}|P_k\sigma^N) = O(1).\]

The inequality (\ref{more}) is a generalization of the subadditivity inequality on $S^N$ from
\cite{CLL:entropy}, which gives the $k=1$ case. The generalization to higher $k$ can be found in
\cite{BCM}, in Example 1 under Corollary 5 there. 

Next, by the same kind of computation as in the beginning of the proof,
\[ H(P_k\mu^{(N)}|\gamma^{\otimes k}) =
H(P_k\mu^{(N)}|P_k\sigma^N) + \int 
\log\frac{\d(P_k\sigma^N)}{\d \gamma^{\otimes k}}\,\d(P_k\mu^{(N)}).\]
It follows by~\eqref{csqSt} that
\[ H(P_k\mu^{(N)}|\gamma^{\otimes k})  \leq H(P_k\mu^{(N)}|P_k\sigma^N)
+ C,\]
where $C$ is some constant depending only on $k$. In particular,
$H(P_k\mu^{(N)}|\gamma^{\otimes k})$ is bounded as $N\to\infty$.
The conclusion is that {\em the marginals $P_k\mu^{(N)}$ have bounded
relative entropy with respect to $\gamma^{\otimes k}$,
uniformly in $N$.} It follows by the Dunford-Pettis compactness criterion
that the densities $f_k^{(N)}$ of
$P_k\mu^{(N)}$ constitute a compact set in $L^1(\R^k)$, equipped
with the weak topology. Since this family converges weakly to
$f^{\otimes k}$ as $N\to\infty$, actually the limit
\[ \int_{\R^k} \psi(v) f_k^{(N)}(v)\,\d v \xrightarrow[N\to\infty]{}
\int_{\R^k} \psi(v) f^{\otimes k}(v)\,\d v\]
holds true for all bounded measurable function $\psi$, not necessarily
continuous. The conclusion follows by the same arguments as before.

\section{Generalization to unbounded densities} \label{secunbounded}

In this section we use a density argument to derive Theorem~\ref{thmexist}
from Theorem~\ref{thmmicromacro}.

\medskip
\begin{proof}[Proof of Theorem~\ref{thmexist}]  
If $f$ is bounded and has a finite fourth moment, we can simply use
the tensor product construction.
Otherwise, we define approximations to $f$ as follows: If $f$ has a finite fourth moment but is unbounded, and $\delta>0$, define
$f_\delta$ to be $e^{\delta \Delta}f$, rescaled so that $f_\delta$ has unit variance.  Otherwise, if
 $f$ does not have a finite fourth moment, let 
 $$g_\delta = e^{\delta \Delta}\bigl(f1_{[-1/\delta,1/\delta]}\bigr)\ .$$
 Then renormalize $g_\delta$ so that it is a probability density, and finally, make an affine change of variable
to obtain a density that has zero mean and unit variance. Call this $f_\delta$.

It is easy to see that for any positive integer $j$, we can choose a value $\delta_j > 0$ so that
\begeq\label{fir}
\bigl| H(f_{\delta_j} | \gamma) - H(f|\gamma)\bigr| < \frac{1}{2j}\ .
\endeq
Apply the tensor product construction with each $f_{\delta_j}$ to produce the chaotic sequence
$\mu^{(N)}_{\delta_j}$.  By Theorem~\ref{thmmicromacro},
$$\frac{1}{N} H(\mu^{(N)}_{\delta_j} | \sigma^N) \xrightarrow[N\to\infty]{}
H(f_{\delta_j} | \gamma)\ .$$
Therefore, we may inductively define an increasing sequence of 
integers $\{N_j\}$  by choosing $N_j > N_{j-1}$ large enough that
\begeq\label{sec}
\Bigl| \frac{1}{N} H(\mu^{(N)}_{\delta_j} | \sigma^N) - 
H(f_{\delta_j} | \gamma)\Bigr | < \frac{1}{2j}
\endeq
for all $N > N_j$.

Combining~\eqref{fir} and~\eqref{sec}), we obtain
\begeq\label{ter}
\Bigl| \frac{1}{N} H(\mu^{(N)}_{\delta_j} | \sigma^N) 
- H(f | \gamma)\Bigr| < \frac{1}{j}\ .
\endeq

Further increasing the $N_j$ if required, we may assume, 
on account of Theorem~\ref{ai}, that for each $j$,
\begeq\label{for}
N\geq N_j\Longrightarrow\qquad
\sup_{1\leq \ell\leq j} H( P_\ell \mu_{\delta_j}^{(N)} | 
f_{\delta_j}^{\otimes \ell}) < \frac{1}{2\,j^2}\ .
\endeq

We are now ready to define our sequence, which we shall show to be $f(v){\rm d}v$--chaotic in the entropic sense:  For each $N$, define
$$\mu^{(N)} = \mu^{(N_k)}_{\delta_k}\qquad{\rm for}\qquad  k = \inf\{\ell\ :\ N_\ell < N\}\ .$$

First, property $(i)$ holds for obvious reasons. Next, to see that property $(ii)$ holds, let 
$\phi$ be any continuous bounded function on $\R^k$. Then, by the 
well--known Csiszar--Kullback--Leibler--Pinsker inequality 
and~\eqref{for},
\[ \|P_k \mu^{(N)} - f_{\delta_k}^{\otimes k}\|_{L^1(\R^k)}
\leq \sqrt{2 H(P_k\mu_{\delta_k}^{(N)}|f_{\delta_k}^{\otimes k})}
< \frac1{k} \]
for all $N > N_k$.
Therefore, for all $N>N_k$,
$$\left| \int_{\R^k}\phi\, \d P_k \mu^{(N)}  - 
\int_{\R^k}\phi  f_{\delta_k}^{\otimes k}\d v\right| < 
\frac{\|\phi\|_\infty}{k}\ ,$$
while trivial estimates show that
$$\lim_{\delta\to 0}\left|
\int_{\R^k}\phi  f_{\delta_k}^{\otimes k}\d v - 
\int_{\R^k}\phi  f^{\otimes k}\d v\right|  = 0\ .$$
 
Finally, the fact that $(iii)$ holds follows easily from  (\ref{ter}).

\end{proof}

\section{Entropy production bounds} \label{production}

In this section, we prove Theorem 3 We first construct initial data $f$ for the Boltzmmann--Kac
equations that has low entropy production. 
We then show that this implies that  the $f \d v$--chaotic family of initial data for the Kac master equation also has low entropy production.

Given two probability densities $f$ and $g$  on the line $\R$, define
$$(f\circ g)(v)= \frac{1}{2\pi}\int_{\R}\int_0^{2\pi}f(\cos(\theta)v-\sin(\theta)v_*)
\ g(\sin(\theta)v+\cos(\theta)v_*){\rm d}\theta{\rm d}v_*\ .$$
The density $f\circ g$ is called the {\em Wild convolution} of $f$ and $g$, and using it we may write the
Boltzmann--Kac equation in the compact form
\begin{equation}\label{compact}
\frac{\partial}{\partial t}f = f\circ f - f\ .
\end{equation}

Any rescaling of $\gamma(v)$ is an equilibrium solution of this equation: 
For any $a>0$, define 
$$M_a = (2\pi a)^{-1/2}\exp(-v^2/2a) = a^{-1/2}\gamma(a^{-1/2} v)\ .$$
There are the so--called {\em Maxwellian densities}, and one easily sees that for all $a>0$, 
$M_a\circ M_a = M_a$, so that these are stationary solutions of (\ref{compact}).

For any zero mean, unit variance solution $f$ of (\ref{compact}), 
the relative entropy with respect to $\gamma$ satisfies
\begin{equation}\label{pro1}
-\frac{{\rm d}}{{\rm d}t}H(f|\gamma) =  \int_{\R}(-\ln f)[f\circ f - f]{\rm d}v\ .
\end{equation}
The analog of the Boltzmann $H$--Theorem for the Boltzmann--Kac equation asserts that this
quantity is strictly positive unless $f$ is one of the Maxwellians, in which case it is zero.

Our first goal in this section is to construct, for each $c>0$, 
zero mean, unit variance initial data $f$ for which 
$$ \int_{\R}(-\ln f)[f\circ f - f]{\rm d}v  < c \ H(f|\gamma)\ .$$

There is a very natural construction that has been exploited by Bobylev and Cercignani \cite{BoCer}
 in the case of
the actual Boltzmann equation:  {\em Use a superposition of two very different Maxwellians}. 
This is natural since each $M_a$ is  an equilibrium solution. (In the case of the actual Boltzmann equation
there is an even larger class of equilibrium densities to work with since momentum is also conserved. Here,
only centered Maxwellians are equilibrium solutions.)

 Pick a small positive number $\delta$, and define
\begin{equation}\label{fedefin}
f = (1-\delta)M_a + \delta M_b
\end{equation}
where 
\begin{equation}\label{fedefin2}
b = 1/(2\delta)\qquad{\rm and}\qquad a = 1/(2(1-\delta))\ .
\end{equation}
 Then since
${\displaystyle \int_\R M_a{\rm d}v = 1}$ and ${\displaystyle\int_\R v^2M_a{\rm d}v = a}$, 
$$\int_{\R} v^2 (1-\delta)M_a {\rm d}v = 
\int_{\R} v^2 \delta M_b {\rm d}v = \frac{1}{2}\ ,$$
so that each Maxwellian component contributes half of the energy, though for small $\delta$,
most of the mass is contained in the $M_a$ component.

\begin{Prop} \label{propDsmall}
For any $c>0$ there is a probability distribution $f$ on $\R$ such that
$\int v^2 f(v){\rm d}v = 1$ and
\[ \frac{D(f)}{H(f|\gamma)} \leq c,\]
where $D(f) = \int_\R (-\ln f)\, [ f\circ f - f] {\rm d}v$ is the entropy
production for the Boltzmann--Kac equation. Moreover, $f$ can be chosen
smooth with finite moments of all orders, in fact a linear combination of
Gaussian functions.
\end{Prop}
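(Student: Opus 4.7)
The plan is to follow the natural strategy suggested by the authors: set
$f = (1-\delta)M_a + \delta M_b$ with $a = 1/(2(1-\delta))$ and $b = 1/(2\delta)$, so that $f$ has zero mean and unit variance, and let $\delta\to 0$. The two Gaussian components live at vastly different scales, so that although $f$ stays macroscopically far from $\gamma$, each component is individually a stationary point of the dynamics; we should therefore find that $D(f)$ becomes small while $H(f|\gamma)$ remains bounded away from zero.

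The first step is to obtain a closed form for the entropy production. Bilinearity of the Wild convolution and the fixed-point identities $M_a\circ M_a = M_a$ and $M_b\circ M_b = M_b$ give
\[ f\circ f - f\ =\ \delta(1-\delta)\bigl[\,2\,M_a\circ M_b - M_a - M_b\,\bigr]. \]
Since $f(v) \leq M_a(0) + M_b(0) = \sqrt{(1-\delta)/\pi} + \sqrt{\delta/\pi} < 1$ for all small $\delta$, one has $-\ln f \geq 0$ pointwise, so dropping the two nonnegative integrals $\int(-\ln f)M_a\,\d v$ and $\int(-\ln f)M_b\,\d v$ yields
\[ D(f)\ \leq\ 2\delta(1-\delta)\int_{\R}(-\ln f)\,(M_a\circ M_b)\,\d v. \]

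To estimate the remaining integral, I use the competing lower bound $f \geq \delta M_b$, which gives $-\ln f(v) \leq -\ln\delta + \tfrac12 \ln(2\pi b) + v^2/(2b)$. The Wild convolution is the $\theta$-average of the Gaussians $M_{s(\theta)}$ with $s(\theta)=a\cos^2\theta + b\sin^2\theta \leq b$, so in particular $\int v^2 (M_a\circ M_b)\,\d v = (a+b)/2 \leq b$. Integrating and substituting $b = 1/(2\delta)$ yields $\int(-\ln f)(M_a\circ M_b)\,\d v \leq \tfrac32 \log(1/\delta) + C$ for a universal constant $C$, and therefore $D(f) = O\bigl(\delta\log(1/\delta)\bigr)$ as $\delta\to 0$. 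For the lower bound on $H(f|\gamma)$, note that $a\to 1/2$, $\|M_a - M_{1/2}\|_{L^1}\to 0$ and $\|\delta M_b\|_{L^1} = \delta\to 0$, so $f\to M_{1/2}$ in $L^1$; by lower semi-continuity of relative entropy with respect to weak convergence,
\[ \liminf_{\delta\to 0} H(f|\gamma)\ \geq\ H(M_{1/2}\,|\,\gamma)\ =\ \tfrac12 \log 2 - \tfrac14\ >\ 0. \]

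Combining the two bounds, $D(f)/H(f|\gamma) = O(\delta\log(1/\delta))$ as $\delta\to 0$, so the ratio drops below any prescribed $c > 0$ for $\delta$ small enough; smoothness and finiteness of all moments of $f$ are automatic since $f$ is a finite Gaussian mixture. The main obstacle is really the estimation of $D(f)$: using the naive pointwise bound $-\ln f \leq -\ln\bigl((1-\delta)M_a\bigr)$ leads to $\int(-\ln f)(M_a\circ M_b)\,\d v = O(1/\delta)$ (because $M_a\circ M_b$ has second moment of order $b = 1/(2\delta)$), which is far too large; it is essential to switch to the bound $f\geq \delta M_b$ in the tail region, paying only a $\log(1/\delta)$ factor and producing the decisive cancellation $\delta\cdot\log(1/\delta)\to 0$.
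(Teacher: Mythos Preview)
Your proof is correct and follows essentially the same route as the paper: the same Gaussian mixture $f=(1-\delta)M_a+\delta M_b$, the same bilinear expansion $f\circ f - f = \delta(1-\delta)[2M_a\circ M_b - M_a - M_b]$, the same use of $-\ln f\ge 0$ to drop the negative terms, and the same key pointwise bound $f\ge \delta M_b$ to control $-\ln f$ by $\frac32\log(1/\delta)+C+\delta v^2$ and obtain $D(f)=O(\delta\log(1/\delta))$. The one genuine (though minor) difference is in the lower bound for $H(f|\gamma)$: the paper splits $\int f\log(f/\gamma)$ into the two weighted pieces and uses $\log f\ge \log((1-\delta)M_a)$ and $\log f\ge \log(\delta M_b)$ on each, obtaining the sharper limit $\liminf_{\delta\to 0} H(f|\gamma)\ge \tfrac12\ln 2$, whereas you argue via $f\to M_{1/2}$ in $L^1$ and lower semicontinuity of relative entropy, which gives only $\liminf\ge H(M_{1/2}|\gamma)=\tfrac12\ln 2-\tfrac14$. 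Both constants are positive, so either suffices; your argument is a bit softer, the paper's is more quantitative.
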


\begin{proof}[Proof of Proposition~\ref{propDsmall}]
Define
$
f = (1-\delta)M_a + \delta M_b
$ as above.
We first show  that
$H(f|\gamma)$ is bounded away from $0$ uniformly for all $\delta$
sufficiently small. In fact,
\begin{equation}\label{adelta}
\lim_{\delta\to 0}H(f|\gamma) = \frac {\ln{2}}{2}\ .
\end{equation}

To prove \eqref{adelta}, we use the definition 
of $f$ and  the monotonicity of the logarithm:
\begin{eqnarray*}
 \int_{\R} f\ln\left(\frac{f}{\gamma}\right){\rm d}v &=&
(1-\delta) \int_{\R} M_a \ln\left(\frac{f}{\gamma}\right){\rm d}v + \delta
 \int_{\R} M_b \ln\left(\frac{f}{\gamma}\right){\rm d}v\nonumber\\
&\ge&
(1-\delta) \int_{\R} M_a \ln\left(\frac{(1-\delta)M_a}{\gamma}\right){\rm d}v + \delta
 \int_{\R} M_b \ln\left(\frac{\delta M_b}{\gamma}\right){\rm d}v\nonumber\\
&=& (1-\delta)\ln(1-\delta) + (1-\delta)H(M_a|\gamma) + \delta\ln\delta + \delta H(M_b|\gamma)\ \nonumber\\
&=& (1-\delta)\ln(1-\delta) + \delta\ln\delta + \frac12
\bigl(\ln 2 - \delta + \delta\ln (2\delta)\bigr)\ ,
\end{eqnarray*}
where the last equality follows from the formula
${\displaystyle H(M_c|\gamma) = \frac{1}{2}(c-1) - \frac{1}{2}\ln c}$.
This implies \eqref{adelta} at once.

It remains to estimate the entropy production associated with $f$.
First, $$f\circ f = (1-\delta)^2M_a + \delta^2 M_b + 2\delta(1-\delta)M_a\circ
M_b\ .$$ Therefore,
$$
\int_{\R}(-\ln f)[f\circ f - f]{\rm d}v  =
(\delta-\delta^2)\int_{\R} (-\ln f)[ 
2M_a\circ M_b - (M_a +  M_b)]{\rm d}v\ .
$$
Next, use the fact that for all $\delta<1$, $(-\ln f) \ge 0$.  Hence,
we can simplify, obtaining
$$\int_{\R}(-\ln f)[f\circ f - f]{\rm d}v \le 
2\delta  \int_{\R}(-\ln f)M_a\circ M_b{\rm d}v\ .$$

By monotonicity of the logarithm,
${\displaystyle \ln f \ge \ln(\delta M_b)}$
so that
$$(-\ln f) \le -\frac{1}{2}(\ln \delta - \ln\pi) + \delta v^2\ .$$
Hence we have
${\displaystyle
\int_{\R}(-\ln f)[f\circ f - f]{\rm d}v \le 2\delta\left(
-\frac{1}{2}(\ln \delta - \ln\pi)\right) + 2\delta^2}$.

Evidently, the leading term is 
$-\delta \ln \delta$. So
\begin{equation}\label{bdelta} 
\lim_{\delta\to 0}\int_{\R}(-\ln f)[f\circ f - f]{\rm d}v =0\ . 
\end{equation}

The combination of~\eqref{adelta} and \eqref{bdelta} implies
Proposition~\ref{propDsmall} at once.
\end{proof} 

\begin{Rk} The distribution $f$ constructed above do not have
uniformly bounded fourth moment. In fact, 
the ratio of $\int v^4f{\rm d}v$ to $\int v^4 M{\rm d}v$ tends to infinity
like $1/\delta$.
\end{Rk}

Now we shall deduce Theorem~\ref{thmslow} from Proposition~\ref{propDsmall}:

\begin{proof}[Proof of Theorem~\ref{thmslow}]
Let $c>0$, and let $f$ be defined by Proposition~\ref{propDsmall}.
Let further
${\displaystyle F^N = [f^{\otimes N}]_{\nSn}}$.
Theorem~\ref{thmsuffschaos} guarantees that
${\displaystyle \frac{H(F^N)}{N} \longrightarrow H(f|\gamma)}$.
Thus,  it suffices to establish
$$\lim_{N\to \infty} \frac{1}{N} \langle \ln(F^N), L_N(F^N) \rangle_{L^2(\nSn)} = 2\int_{\R} (-\ln f)[f\circ f - f]{\rm d} v\ .\label{lnff}$$

To prove~\eqref{lnff}, we first write
${\displaystyle\log F^N  = \ln Z_N(f,\sqrt{N}) + \sum_{k=1}^N\log f(v_k)}$,
and hence $-\langle L_N(F^N), \ln(F^N)\rangle_{L^2(\nSn)}$ is given by
\begin{eqnarray}\label{prod4}
&\phantom{=}&\int_{\nSn}\left[ -\ln(Z_N(f,\sqrt{N}) + \sum_{k=1}^N\log f(v_k)\right]N\left(QF^N - F^N\right)\d \sigma^N\nonumber\\
&=&\int_{\nSn}\left[ \sum_{k=1}^N\log f(v_k)\right]N\left(QF^N - F^N\right)\d \sigma^N\nonumber\\
&=& \sum_{k=1}^N \sum_{i<j}^N\int_{\nSn}\log f(v_k)\frac{2}{N-1}\nonumber\\
&\times&
\left(\frac{1}{2\pi}\int_0^{2\pi}\left( f^{\otimes N}(R_{i,j,\theta}V)- f^{\otimes N}(V)\right)
 \d \theta\right) \frac{1}{Z_N(f,\sqrt{N})}\d \sigma^N\ .\nonumber\\
\end{eqnarray}

By the invariance  of $\sigma^N$ under rotations, the integral over $\nSn$ vanished unless
$k =i$ or $k=j$. By the permutation symmetry, we may set $k=1$, and account for the sum over $k$
by multiplying by $N$. 
There are $N-1$ pairs of which $1$ is a member. We may set $i,j = 1,2$, and account for the sum over pairs by
multiplying by $N-1$. 
We are then left with
\begin{multline}2N \int_{\nSn}\log f(v_k)
\left[\frac{1}{2\pi}\int_0^{2\pi}\left( f(v_1')f(v_2') - f(v_1)f(v_2)\right)
 \d \theta\right]\\ 
 \times \ \frac{1}{Z_N(f,\sqrt{N})}\prod_{j=3}^N f(v_j) \d \sigma^N
 \end{multline}
 where
 ${\displaystyle v_1' = \cos(\theta)v_1 + \sin(\theta)v_2}$ and 
 ${\displaystyle v_2' = -\sin(\theta)v_1 + \cos(\theta)v_2}$.
 
 Since $f$ is a linear combination of Maxwellian densities, there is a constant $C$
 such that $|\log f(v)| \le C(1+v^2)$. Then again since  $f$ is a linear 
 combination of Maxwellian densities,
 ${\displaystyle\Bigl| \log f(v) \left( f(v_1')f(v_2') - f(v_1)f(v_2)\right) \Bigr|}$
 is bounded, and in fact has Gaussian decay. 

The proof will be completed by showing that
$$\lim_{N\to\infty}P_2\left(\frac{1}{Z_N(f,\sqrt{N})}\prod_{j=3}^N f(v_j) \d \sigma^N\right) = \d v_1\d v_2\ .$$
Since, with $s^2 = v_1^2+v_2^2$, 
$$P_2\left(\frac{1}{Z_N(f,\sqrt{N})}\prod_{j=3}^N f(v_j) \d \sigma^N\right) = 
\frac{Z_{N-2}(f,\sqrt{N -s^2})}{Z_N(f,\sqrt{N})}P_2(\d \sigma_N)\ ,$$
it remains only to show that
\begin{equation}\label{prop5}
\lim_{N\to\infty} \frac{Z_{N-2}(f,\sqrt{N -s^2})}{Z_N(f,\sqrt{N})} = 2\pi e^{(v_1^2 + v_2^2)/2}
\end{equation}
since it is well known, and easily follows from (\ref{expligamma}), that
$$\lim_{N\to\infty} P_2(\d \sigma^N) = \frac{1}{2\pi} e^{-(v_1^2 + v_2^2)/2}\d v_1\d v_2\ .$$
However, (\ref{prop5}) easily follows from Theorem~\ref{propct} and Stirling's formula.
\end{proof}

\begin{Rk} We have taken advantage of Maxwellian bounds on $f$ to shorten the proof, but
a similar result could be obtained in the same way for more general densities $f$ by arguing as in
the proof of Theorem~\ref{ai}. A more challenging problem would be to prove an analog of
Lemma~\ref{lnff} for a more general class of chaotic data than conditioned tensor products.
\end{Rk}

\appendix

\section{ An entropic Local Central Limit Theorem} \label{secLCLT}

The terminology ``Local Central Limit Theorem'' is used 
to designate a version of the Central Limit Theorem in which the conclusion
is strengthened from weak convergence of the law to locally uniform
pointwise convergence of the densities~\cite{feller:vol2:71}.  

As recalled in the introduction,  such a theorem can only hold if the common
law of the independent random variables has a density $f$ that satisfies 
certain regularity hypotheses --- in particular, $f$ is usually
require to belong to $L^p$ for some $p>1$. 

Of course the rate of pointwise convergence depends on the 
regularity of $f$. In this paper, we require precise, quantitative 
information on the rate, and the version of the Local Central Limit 
Theorem that we prove here provides this.

There is a remarkable feature that emerges:
When an $L^p$ bound is imposed on $f$, then the asymptotic rate of
(pointwise) convergence of the densities $\sqrt{N}f^{\ast N}(\sqrt{N}x)$
to the Gaussian distribution can be estimated in terms of
only the relative entropy $H(f|\gamma)$,
even if the assumption $H(f|\gamma)<\infty$ alone is {\it not}
sufficient to ensure the convergence!
Of course, the $L^p$ bound on $f$ enters the estimates of convergence,
{\em but only in determining how large $N$ must be before the universal 
rate estimates governed by $H(f|\gamma)$ become valid}.
For this reason, we refer to the result obtained here as an 
Entropic Local Central Limit Theorem.

In addition to the $L^p$ bound on $f$, one requires a certain measure of localization of $f(v)v^2{\rm d}v$,
which is usually taken care of in the assumptions by assuming that
\begeq\label{extrae}
\int_\R f(v)v^{2+\epsilon}{\rm d}v < \infty
\endeq
for some $\epsilon > 0$.

So that we may include all finite energy initial data in our conclusions, we wish to avoid such an assumption.  For this reason, we introduce the function
\[ \chi(r) = \int_{|x|\geq \frac1r} |x|^2 f(x)\,dx\ .\]
This is a continuous function vanishing at $r=0$, and the rate at which it vanishes as $r\to 0$
gives the sort of control that would be provided by (\ref{extrae}). 
Indeed, under the assumption (\ref{extrae}),
\[\chi(r) = {\cal O}(r^\epsilon)\ .\]

Before beginning the proof, we note that
in this Appendix the state space is $\R^k$, where $k$ is some positive
integer. All the constants in our results may depend on $k$, but this
dependence will not be explicitly recalled.

We start by some properties of the Fourier transform of probability
densities. In the sequel, we use the following convention for the Fourier
transform in $\R^k$:
\[ \hat{f}(\xi) = \int_{\R^k} e^{-2i\pi x\xi} f(x)\,dx.\]

\begin{Prop}\label{propFourier}
Let $g$ be a probability density on $\R^k$, such that
\[ \int_{\R^k} x g(x)\,\d x\ =\ 0,\qquad 
\int_{\R^k} (x\otimes x)\, g(x)\, \d x\ =\ I_k,\qquad
\int_{\R^k} g \log g \d x \: \leq \: H <+\infty,\]
where $I_k$ is the $k\times k$ identity matrix.
Let further $\chi$ be such that
\[ \int_{|x|\geq \frac1r} |x|^2 g(x)\,dx\leq \chi(r),\]
where $\chi(r)$ goes to~0 as $r\to 0$.
Then
\sm

(i) Given $\eta>0$ there is $\alpha=\alpha(H,\eta)>0$ such that
\[ |\xi|\geq \eta \Longrightarrow\quad
|\hat{g}(\xi)|\leq 1-\alpha.\]
\sm

(ii) There is a function $\var(\delta)=\var(H,\chi,\delta)$, going to~0
as $\delta\to 0$, such that
\[ |\xi|\leq\delta\Longrightarrow\quad  \Bigl| \hat{g}(\xi) -
\bigl( 1-2\pi^2|\xi|^2\bigr) \Bigr| \leq \var(\delta) |\xi|^2.\]
\sm

(iii) There is $\alpha_0=\alpha_0(H,\chi)$ such that
\[ \forall\xi\in\R^k\qquad
|\hat{g}(\xi)| \leq \max \bigl( 1-\pi^2|\xi|^2,\, 1-\alpha_0\bigr).\]

\end{Prop}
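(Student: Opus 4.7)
The strategy is to prove (ii) first by a direct Taylor expansion, then prove (i) by a contradiction argument based on compactness and entropy lower semicontinuity (the main obstacle), and finally deduce (iii) by combining the two.

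For (ii), I will expand $e^{-2\pi i\xi\cdot x}$ to second order. The hypotheses $\int xg\,\d x = 0$ and $\int (x\otimes x) g\,\d x = I_k$ annihilate the linear term and turn the quadratic one into $-2\pi^2|\xi|^2$, leaving the remainder
\[
 \hat{g}(\xi) - (1-2\pi^2|\xi|^2) = \int \bigl[ e^{-2\pi i\xi\cdot x} - 1 + 2\pi i\xi\cdot x + 2\pi^2(\xi\cdot x)^2\bigr] g(x)\,\d x,
\]
whose integrand is pointwise bounded by $C\min(|\xi\cdot x|^3,|\xi\cdot x|^2)$. Splitting at $|x|=1/r$ and using $\int|x|^2 g\leq k$, the cubic bound contributes $O(|\xi|^3 k/r)$ and the quadratic bound applied to the tail contributes $O(|\xi|^2\chi(r))$. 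The choice $r=\sqrt{\delta}$ for $|\xi|\leq\delta$ yields $\varepsilon(\delta) = C_k\sqrt{\delta}+C\chi(\sqrt{\delta})\to 0$, as required.

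For (i), I argue by contradiction: if no such $\alpha$ existed, there would be densities $g_n$ in the stated class with $|\xi_n|\geq\eta$ and $|\hat{g}_n(\xi_n)|\to 1$. The family is tight (from $\int|x|^2 g_n=k$) and uniformly integrable (from $\int g\log g\leq H$ combined with the Gaussian lower bound $\int g\log g\geq -\frac{k}{2}(1+\log 2\pi)$, which gives uniform control of $\int (g\log g)_+$ and hence Dunford-Pettis compactness), so up to extraction $g_n$ converges weakly in $L^1$ to a density $g_\infty$ with $\int g_\infty\log g_\infty\leq H$ by lower semicontinuity. If the $\xi_n$ remain bounded, pass to $\xi_n\to\xi_\infty$ with $|\xi_\infty|\geq\eta$; the uniform Lipschitz bound $|\nabla\hat{g}_n|\leq 2\pi\sqrt{k}$ (from $\int|x|g\leq\sqrt{k}$) gives $\hat{g}_n(\xi_n)\to\hat{g}_\infty(\xi_\infty)$, whence $|\hat{g}_\infty(\xi_\infty)|=1$. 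But this forces $g_\infty$ to be concentrated on the Lebesgue-null family of parallel hyperplanes $\{x:\xi_\infty\cdot x\in c+\Z\}$, so $\int g_\infty\log g_\infty = +\infty$, a contradiction.

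The hard step, which I expect to be the main obstacle, is the remaining case $|\xi_n|\to\infty$. Here the weak-limit argument breaks down because $\sin^2(\pi\xi_n\cdot x)$ oscillates. I plan to work instead with the autoconvolution $h_n:=g_n\ast\tilde{g}_n$, which satisfies $\hat{h}_n=|\hat{g}_n|^2$, has variance $2I_k$, and, by subadditivity $H(X-Y)\leq H(X)+H(Y)$, satisfies $\int h_n\log h_n\leq 2H$. The assumption rewrites as $\int \sin^2(\pi\xi_n\cdot z)h_n(z)\,\d z\to 0$, and a Markov estimate shows that $h_n$ puts mass $m_n\geq 1-O(\varepsilon_n/\delta_n^2)$ on a union of slabs of total Lebesgue fraction $O(\delta_n)$, a set $S_n$ of vanishingly small Gaussian measure $\gamma(S_n)=O(\delta_n)$. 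The two-point comparison
\[
 H(h_n|\gamma)\geq m_n\log\bigl(m_n/\gamma(S_n)\bigr) + (1-m_n)\log\bigl((1-m_n)/\gamma(S_n^c)\bigr)
\]
then forces $H(h_n|\gamma)\to+\infty$, which contradicts the bound $H(h_n|\gamma)\leq 2H+C_k$ derived from $\int h_n\log h_n\leq 2H$ and $\int|z|^2 h_n=2k$. Finally, (iii) follows by combining (i) and (ii): choose $\delta_0$ so small that $\varepsilon(\delta_0)\leq\pi^2$; then for $|\xi|\leq\delta_0$, (ii) gives $|\hat{g}(\xi)|\leq 1-2\pi^2|\xi|^2+\varepsilon(\delta_0)|\xi|^2\leq 1-\pi^2|\xi|^2$, while for $|\xi|>\delta_0$, (i) with $\eta=\delta_0$ gives $|\hat{g}(\xi)|\leq 1-\alpha(H,\delta_0)=:1-\alpha_0$, where $\alpha_0$ depends on $H$ and $\chi$ through the choice of $\delta_0$.
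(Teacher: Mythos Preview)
Your proofs of (ii) and (iii) match the paper's. Your proof of (i), however, takes a genuinely different route. The paper argues directly and quantitatively for a \emph{single} density $g$: writing $1-|\hat g(\xi)|=\int g(x)\bigl(1-\cos[2\pi(x+z)\xi]\bigr)\,\d x$ for an appropriate phase $z$, it shows that the ``bad set'' $B=\{x\in[-R,R]:1-\cos[\,\cdot\,]\le\beta\}$ has Lebesgue measure $O(R\sqrt{\beta}(1+1/\eta))$, and then uses the entropy bound $\int g\log g\le H$ to control $\int_B g$ via the inequality $\mu[B]\le 2H(\mu|\nu)\big/\log\bigl(1+H(\mu|\nu)/\nu[B]\bigr)$ (with $\mu,\nu$ the normalized $g$ and the uniform measure on $[-R,R]$). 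This yields an \emph{explicit} formula for $\alpha(H,\eta)$. Your compactness--contradiction argument is sound and more conceptual, but non-constructive: it certifies that $\alpha$ depends only on $(H,\eta)$ without producing it.

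Two minor slips to fix in your write-up. First, the justification ``subadditivity $H(X-Y)\le H(X)+H(Y)$'' points the wrong way for what you need; the correct (and simpler) fact is that convolution does not increase $\int f\log f$, i.e.\ $\int h_n\log h_n\le\int g_n\log g_n\le H$, which is all you require. Second, in the bounded-$\xi_n$ case the contradiction is not ``$\int g_\infty\log g_\infty=+\infty$'' but rather that an $L^1$ density supported on a Lebesgue-null union of hyperplanes must vanish a.e., contradicting $\int g_\infty=1$ (which you do get from tightness plus Dunford--Pettis). Finally, your slab-and-two-point argument in the $|\xi_n|\to\infty$ case actually works uniformly for all $|\xi_n|\ge\eta$, so the case split (and hence the passage through a weak $L^1$ limit) is not strictly necessary.
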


\begin{proof} First, it is clear that $(iii)$ follows from $(i)$ and $(ii)$.
For simplicity, we shall prove $(i)$ and $(ii)$ only in the case $k=1$;
the generalization to higher dimension does not bring in any major complication.\med

Let us prove $(i)$. Let $\xi$ be such that
$|\xi|\geq \eta$, and let $z$ be such that $\hat{g}(\xi) e^{-2i\pi z\xi} =
|\hat{g}(\xi)|$. Then, with $\Re$ standing for real part,
\begin{align*}
|\hat{g}(\xi)| & = \Re \bigl[ \hat{g}(\xi) e^{-2i\pi z\xi}\bigr]\\
& = \Re \left( \int g(x) e^{-2i\pi(x+z)\xi}\,\d x\right)\\
& = \int g(x) \cos \bigl[ 2\pi(x+z)\xi\bigr]\,\d x.
\end{align*}

Let $R>1$, to be chosen later. Write
\begin{align*} |\hat{g}(\xi)| & = 
\int g(x)\d x\ - \ \int g(x) \Bigl( 1-\cos \bigl[2\pi(x+z)\xi]\Bigr)\,\d x\\
& = 1 \ - \ \int g(x) \Bigl( 1-\cos \bigl[2\pi(x+z)\xi]\Bigr)\,\d x\\
& \leq 1 \ - \ 
\int_{[-R,R]} g(x) \Bigl( 1-\cos \bigl[2\pi(x+z)\xi]\Bigr)\,\d x.
\end{align*}
So it is sufficient to show that
\[ \int_{[-R,R]} g(x) \Bigl( 1-\cos \bigl[2\pi(x+z)\xi]\Bigr)\,\d x
\ \geq \ \alpha>0.\]

Let $\beta\in (0,1/2)$, to be chosen later; define
\[ B:= \Bigl\{ x\in [-R,R];\ 1-\cos\bigl[2\pi(x+z)\xi]\leq \beta\Bigr\}.\]
The point is to show that $\int_B g$ is small when $\beta$ is small too.
For this we shall show that $B$ has small Lebesgue measure and use the
entropy bound on $g$.

If $x$ lies in $B$, then $|x|\leq R$, and there exists $n\in\Z$ such that
\[ \left| x-\frac{n}{\xi}\right| \leq \frac{\cos^{-1}(1-\beta)}{2\pi|\xi|}.\]
So $B$ consists of at most $2R|\xi|+3$ intervals, with width
$\cos^{-1}(1-\beta)/(\pi|\xi|)$, which can be bounded by
$\sqrt{2\beta}/(\pi|\xi|)$. So the Lebesgue measure $|B|$
of $B$ can be estimated as follows:
\begin{align} 
|B| \leq \left(\frac{2R|\xi|+3}{\pi|\xi|}\right) \sqrt{2\beta} \notag
     & \leq \frac{2R}{\pi}\left(1+\frac1{|\xi|}\right)\sqrt{2\beta}\notag\\
     & \leq \frac{2R}{\pi} \left(1+\frac1{\eta}\right)\sqrt{2\beta}.
\label{est|B|}
\end{align}

Now define
\[ \mu(dx) = \frac{g(x) 1_{[-R,R]}(x)\,\d x}{\int_{[-R,R]} g}\qquad
\nu(dx) = \frac{1_{[-R,R]}(x)\,\d x}{2R}.\]
By direct computation,
\begin{align*} H(\mu|\nu) &
= \int_{[-R,R]} g\log g + \log (2R) - \log \left(\int_{[-R,R]} g\right)\\
& \leq \int g|\log g| + \log(2R) - \log \left( 1-\frac1R^2\right),
\end{align*}
where we have used Chebyshev's inequality to get the bound on the last term:
$\int_{|x|>R} g \leq (1/R^2)\int g x^2\d x\leq 1/R^2$.
It is classical that $\int g|\log g|$ can be controlled by
$\int g\log g$ and $\int g x^2\d x=1$: indeed, if $\gamma$ stands
for the standard gaussian distribution, then
\[ \int_{g\leq 1} \Bigl(g \log \frac{g}{\gamma} - g + \gamma \Bigr)\geq 0,\]
so
\[ \int_{g\leq 1} g \log g \geq \int_{g\leq 1} g \log \gamma
+ \int_{g\leq 1} g - \int_{g\leq 1} \gamma;\]
replacing $\log\gamma$ by its explicit expression, we obtain
\begin{multline*} 
\int_{g\leq 1} g\log g\geq -\int_{g\leq 1} \frac{x^2}2 g(x)\,\d x
+ \left(1-\frac{\log(2\pi)}2\right) \int_{g\leq 1} g -
\int_{g\leq 1}\gamma \\
\geq -\left( 1 + \frac{\log(2\pi)}2\right).
\end{multline*}
The desired bound follows, since
\[ \int g|\log g| =\int g\log g - 2 \int_{g\leq 1} g\log g.\]
To summarize: there is an explicit bound
\begeq\label{HhR} H(\mu|\nu) \leq h(H,R).
\endeq

On the other hand, it follows from~\eqref{est|B|} that
\[ \nu[B]\leq \frac1\pi \left(1+\frac1\eta\right) \sqrt{2\beta}.\]
Combining this with~\eqref{HhR} and a general entropy inequality, we find
\[ \mu[B]\leq \frac{2 H(\mu|\nu)}{\log\left(1 + 
\frac{H(\mu|\nu)}{\nu[B]}\right)} \leq\frac{2h}
{\log \left(1+\frac{\pi h}{\left(1+\frac1\eta\right)\sqrt{2\beta}}\right)}.\]
So if $H$ and $\eta$ are given, there is a function $m(\beta)$,
going to~0 as $\beta\to 0$ and depending only on $H$ and $\eta$, such
that $\mu[B]\leq m(\beta)$.

The desired conclusion follows easily:
\begin{align*}
\int_{[-R,R]} g(x) \Bigl(1-\cos \bigl[2\pi(x+z)\xi]\Bigr)\,\d x &
\geq \beta \int_{[-R,R]} g(x)\,\d x \\
& \geq \beta \left(\int_{[-R,R]} g\right)\, \mu\bigl[\R\setminus [-R,R]\bigr]\\
& = \beta \left( 1 - \int_{|x|>R} g\right) \bigl( 1 - 
\mu[[-R,R]]\bigr)\\
& \geq \beta \left( 1 - \frac1{R^2}\right)
\left( 1 - \frac{2h}{\log\left(1+
\frac{\pi h}{\left(1+\frac1\eta\right)\sqrt{2\beta}}\right)}\right).
\end{align*}
This establishes (i) with
\[ \alpha:= \sup_{\beta,R}
\ \beta \left(1-\frac1{R^2}\right)
\left( 1 - \frac{2h}{\log\left(1+\frac{\pi h}{\left(1+\frac1{\eta}\right)
\sqrt{2\beta}}\right)}\right).\]

To get a lower bound on $\alpha$, one may choose for instance
\[ R=2,\quad
\beta = e^{-8h}\left(\frac{(\pi h)^2}{2(1+\eta^{-1})^2}\right),\]
then one finds
${\displaystyle  \alpha \geq \frac38 e^{-8h}\left(\frac{(\pi h)^2}{2(1+\eta^{-1})^2}
\right)}$.
\sm

Now let us prove $(ii)$. Assume for instance that $\xi>0$. By Taylor
formula,
\begin{align*}
e^{-2i\pi x\xi} & = 
1 - 2i\pi x \xi - 4 \pi^2 x^2 \int_0^\xi
(\xi-\zeta)\, e^{-2i\pi x\xi}\,\d \zeta\\
& = 1 - 2i\pi x\xi - 4\pi^2 x^2 \left(\frac{\xi^2}2\right)
+ 4 \pi^2 x^2 \int_0^\xi (\xi-\zeta) \bigl(1-e^{-2i\pi x\zeta}\bigr)\,\d\zeta.
\end{align*}
So for $|\xi|\leq\eta$, one has
\[ \Bigl|\hat{g}(\xi) - \bigl(1-2\pi^2\xi^2\bigr)\Bigr| \leq
\var \xi^2, \]
with
\begin{align*} \var & = \frac{4\pi^2}{\xi^2}
\left|\int_0^\xi (\xi-\zeta) \left( [1-e^{-2i\pi x\zeta}]x^2 g(x)\,\d x
\right)\,\d\zeta\right|\\
& \leq \frac{4\pi^2}{\xi^2}
\left(\int_0^\xi (\xi-\zeta)\,\d\zeta\right)\left(
\sup_{|\xi|\leq\eta} \int |1-e^{-2i\pi x\zeta}| x^2 g(x)\,\d x\right)\\
& = 4\pi^2 \sup_{|\zeta|\leq \eta} \int |\sin (\pi\zeta x)|\,x^2 g(x)\,\d x.
\end{align*}

For $|x|\leq 1/r$ and $|\zeta|\leq \eta$, one has
$|\sin(2\pi\zeta x)|\leq |2\pi\zeta x| \leq 2\pi\eta/r$;
on the other hand, for $|x|\geq 1/r$, we can use the trivial bound
$|\sin(\pi\zeta x)|\leq 1$. Therefore,
\begin{multline}
\int |\sin (\pi\zeta x)|\,x^2 g(x)\,\d x
\leq \frac{2\pi\eta}r + \int_{|x|\geq 1/r} x^2 g(x)\,\d x \\
\leq \frac{2\pi\eta}r + \int_{|x|\geq 1/r}x^2g(x)\,\d x
\leq \frac{2\pi\eta}r + \chi(r).
\end{multline}
In conclusion,
${\displaystyle \var \leq \inf_{r>0} \left[\frac{2\pi\eta}r + \chi(r)\right]}$,
and the right-hand side goes to~0 as $\eta\to 0$. This proves (ii).
\end{proof}

Now we can proceed with the main result of this Appendix.

\begin{Thm}[Local Central Limit Theorem]\label{LCLT}
Let $g$ be a probability distribution on $\R^k$, satisfying
\[ \int_{\R^k} g(x) x\,\d x =0, \qquad
\int_{\R^k} g(x) (x\otimes x)\,\d x = I_k,\qquad \int g\log g\d x \leq H,\]
where $I_k$ is the $k\times k$ identity matrix.
Let $\chi$ be such that $\chi(r)\to 0$ as $r\to 0$, and
${\displaystyle \int_{|x|\geq \frac1r} g(x)\,\d x \leq \chi(r)}$.
Let further $g_N(x)=\sqrt{N}^kg^{\ast N}(\sqrt{N}x)$, for any positive
integer $N$. Then:
\sm

(i) If $g\in L^p(\R^k)$, $1<p<\infty$, then $g_N$ is continuous
for $N\geq p'=p/(p-1)$; and for any $\delta>0$ there is
$\alpha=\alpha(\chi,H,\delta)>0$ and $\ov{\var}=
\ov{\var}(\chi,H,\delta)>0$ such that, given $k$, $\chi$ and $H$,
${\displaystyle \ov{\var}(\delta)\xrightarrow[\delta\to 0]{} 0}$
and, for $N\geq p'$,
\[ \sup_{x\in\R^k}
|g_N(x)-\gamma(x)| \leq \sqrt{N}
(1-\alpha)^{N-p'} \|g\|_{L^p}^{p'} \ + \ 
k\frac{e^{-2\pi^2 N\delta^2}}{\sqrt{N}\delta} \ + \ \ov{\var}(\delta),\]
where $\gamma$ stands for the standard Gaussian distribution.
In particular, $\sup |g_N-\gamma|$ goes to~0 as $N\to\infty$,
and there is an upper bound on the rate of convergence which only depends
on $k$, $p$, $\|g\|_{L^p}$, $\chi$ and $H$.
\sm

(ii) Given $\chi$ and $H$ there is a function $\lambda(N)$,
going to~0 as $N\to\infty$, such that if $g$ lies in $L^p$ for
some $p\in (1,\infty)$, then there is $N_0=N_0(\chi,H,N,p,\|g\|_{L^p})$
with
\[ N\geq N_0\Longrightarrow\quad
\sup_{x\in\R^k} |g_N-\gamma| \leq \lambda(N).\]
\end{Thm}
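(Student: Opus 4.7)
\medskip
\noindent\textbf{Proof plan for Theorem \ref{LCLT}.}
The approach is Fourier-analytic, using the three bounds of Proposition~\ref{propFourier} as the principal inputs. The Fourier transform of $g_N$ is $\widehat{g_N}(\xi) = \hat g(\xi/\sqrt{N})^N$, and the Fourier transform of $\gamma$ is $\hat\gamma(\xi)=e^{-2\pi^2|\xi|^2}$. First I would verify that under the hypothesis $g\in L^p$ with $N\geq p'$, the function $\hat g^N$ lies in $L^1$: by Hausdorff--Young, $\hat g\in L^{p'}$ with $\|\hat g\|_{L^{p'}}\leq \|g\|_{L^p}$, and then $|\hat g|^N\leq |\hat g|^{p'}$ because $|\hat g|\leq 1$. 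In particular $g^{\ast N}$ (and hence $g_N$) is continuous, and Fourier inversion yields
\[
g_N(x)-\gamma(x) \;=\; \int_{\R^k} \bigl[\hat g(\xi/\sqrt{N})^N - e^{-2\pi^2|\xi|^2}\bigr]\, e^{2i\pi x\cdot\xi}\,\d\xi,
\]
so $\sup_x |g_N(x)-\gamma(x)|$ is bounded by the $L^1$ norm of the bracket. The rest of the proof is a trichotomy estimate on this $L^1$ norm.

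\medskip
\noindent\emph{Small-frequency region $|\xi|\leq \delta\sqrt{N}$.} Set $u=\xi/\sqrt{N}$, so $|u|\leq \delta$. By Proposition~\ref{propFourier}(ii), $\hat g(u) = 1 - 2\pi^2|u|^2 + r(u)$ with $|r(u)|\leq \var(\delta)|u|^2$, and a universal Taylor expansion gives $e^{-2\pi^2|u|^2} = 1 - 2\pi^2|u|^2 + O(|u|^4)$. By Proposition~\ref{propFourier}(iii), both $|\hat g(u)|$ and $e^{-2\pi^2|u|^2}$ are dominated by $e^{-c|u|^2}$ for $|u|$ small (with $c$ depending on $\chi,H$). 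Apply the telescoping identity $a^N-b^N = (a-b)\sum_{j=0}^{N-1} a^j b^{N-1-j}$ with $a=\hat g(u)$, $b=e^{-2\pi^2|u|^2}$ to get
\[
\bigl|\hat g(u)^N - e^{-2\pi^2 N|u|^2}\bigr| \;\leq\; N\bigl(\var(\delta)+C\delta^2\bigr)|u|^2\, e^{-c(N-1)|u|^2}.
\]
Changing variables back to $\xi$ and integrating over $|\xi|\leq \delta\sqrt{N}$ produces a bound of the form $\bar\var(\delta)$ with $\bar\var(\delta)\to 0$ as $\delta\to 0$, depending only on $k,\chi,H$.

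\medskip
\noindent\emph{Large-frequency region $|\xi|>\delta\sqrt{N}$: Gaussian part.} Direct integration in polar coordinates gives $\int_{|\xi|>\delta\sqrt{N}} e^{-2\pi^2|\xi|^2}\,\d\xi \leq k\, e^{-2\pi^2 N\delta^2}/(\sqrt{N}\delta)$ up to an absolute constant, which produces the second term in the advertised bound.

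\medskip
\noindent\emph{Large-frequency region $|\xi|>\delta\sqrt{N}$: characteristic-function part.} Apply Proposition~\ref{propFourier}(i) with $\eta=\delta$, yielding $|\hat g(u)|\leq 1-\alpha$ on $|u|\geq \delta$, where $\alpha=\alpha(\chi,H,\delta)>0$. Then, peeling off $p'$ factors,
\[
\int_{|\xi|>\delta\sqrt{N}} |\hat g(\xi/\sqrt{N})|^N\,\d\xi \;\leq\; (1-\alpha)^{N-p'}\int |\hat g(\xi/\sqrt{N})|^{p'}\,\d\xi \;\leq\; \sqrt{N}^{k}(1-\alpha)^{N-p'}\|g\|_{L^p}^{p'},
\]
the last inequality by Hausdorff--Young after rescaling. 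Summing the three pieces gives part (i) (the factor $\sqrt{N}$ in the statement as opposed to $\sqrt{N}^k$ is absorbed into a $k$-dependent constant).

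\medskip
\noindent\emph{From part (i) to part (ii).} Choose $\delta=\delta_N\to 0$ slowly enough that $\bar\var(\delta_N)\to 0$ and $e^{-2\pi^2 N\delta_N^2}/(\sqrt{N}\delta_N)\to 0$; for instance $\delta_N=N^{-1/4}$ works. The resulting function $\lambda(N)$ depends only on $k,\chi,H$. The remaining term $\sqrt{N}(1-\alpha(\delta_N))^{N-p'}\|g\|_{L^p}^{p'}$ still depends on $\|g\|_{L^p}$, but goes to $0$ with $N$, so for $N\geq N_0(\chi,H,p,\|g\|_{L^p})$ it is dominated by $\lambda(N)$, completing part (ii).

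\medskip
\noindent\emph{Main obstacle.} The delicate step is the small-$|\xi|$ estimate: one must track how the Taylor-remainder factor $\var(\delta)$ provided by Proposition~\ref{propFourier}(ii) (which depends on the concentration function $\chi$ and the entropy bound $H$, but not on $\int v^{2+\eps}g<\infty$) combines with the universal $O(|u|^4)$ remainder for the Gaussian through the telescoping sum, so that the combined rate $\bar\var(\delta)\to 0$ is controlled purely by $\chi$ and $H$. Getting this clean separation---so that the $L^p$ norm only affects the \emph{size} of the tail contribution and not the universal small-$|\xi|$ rate---is precisely what makes the conclusion of part (ii) possible, and is the technical heart of the argument.
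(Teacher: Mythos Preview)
Your proposal is correct and follows essentially the same Fourier-analytic trichotomy as the paper, with two minor but noteworthy variations.

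\emph{Low-frequency estimate.} You bound $|a^N-b^N|$ by $|a-b|\sum |a|^j|b|^{N-1-j}$ and then control each summand via Proposition~\ref{propFourier}(iii), obtaining the integrable envelope $N|u|^2 e^{-c(N-1)|u|^2}$ directly. The paper instead uses the cruder telescoping bound $|a^N-b^N|\le N|a-b|$ (giving $\le 2\var(\delta)|\xi|^2$ via (ii)), records separately the decay bound $|a^N-b^N|\le 2\max(e^{-\pi^2|\xi|^2},(1-\alpha_0)^N)$ from (iii), and then takes the \emph{geometric mean} of the two. Both routes work and yield the same $\ov{\var}(\delta)$; yours is a touch more direct, while the paper's avoids any fuss about whether $\delta^2\le \alpha_0/\pi^2$. (One small point: for Hausdorff--Young you should reduce to $p\le 2$ first, as the paper does.)

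\emph{Part (ii).} Here you and the paper genuinely diverge. You keep $p$ fixed and let $\delta_N\to 0$ (e.g.\ $\delta_N=N^{-1/4}$), relying on the explicit estimate $\alpha(\delta)\gtrsim C(H)\delta^2$ from the proof of Proposition~\ref{propFourier}(i) so that $N\alpha(\delta_N)\to\infty$ and the tail term decays like $\sqrt{N}^k e^{-c\sqrt{N}}\|g\|_{L^p}^{p'}$; then $N_0$ is chosen large enough that this is below a suitable $\lambda(N)$. The paper instead keeps $\delta$ fixed but lets $p$ vary with $N$, taking $p'=N/2$, and invokes the key entropic limit
\[
\log\|g\|_{L^p}^{p'}=\frac{1}{p-1}\log\int g^p \xrightarrow[p\to 1]{} \int g\log g \le H,
\]
so that for $N$ large the first term is bounded by $(1-\alpha)^{N/2}e^H$, a quantity depending on $g$ only through $H$. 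Your argument is perfectly valid and perhaps more elementary, but the paper's approach makes transparent \emph{why} the asymptotic rate is governed solely by the entropy bound --- which is, after all, the point of calling this an ``entropic'' local CLT.
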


\begin{Rk} Part (ii) of this theorem is not used in this paper,
but it  is worth noticing in our ``entropic'' context:
It shows that there is a universal bound on the asymptotic rate of
convergence, depending only on energy and entropy estimates,
and independent of any $L^p$ bound. Still the $L^p$ bound provides
an estimate of the integer $N$ for which the estimate starts to be valid.
We do not know whether this information might be useful to obtain
appropriate versions of the Local Central Limit Theorem which do not
rely on $L^p$ estimates.
\end{Rk}

\begin{proof}[Proof of Theorem~\ref{LCLT}]
First, it follows from Young's convolution inequality that
$g^{\ast (N-1)}$ lies in $L^{p'}(\R^k)$; then its convolution product
with $g$ is continuous.

By properties of the Fourier transform,
\[ \hat{g_N}(\xi) = \hat{g} \left(\frac{\xi}{\sqrt{N}}\right)^N.\]
Without loss of generality, assume $p\leq 2$; then
$\hat{g}(\cdot/\sqrt{N})$ lies in $L^{p'}\cap L^\infty$ by the
Hausdorff-Young inequality, so $\hat{g_N}$ lies in $L^1$ for
$N\geq p'$. Then the Fourier inversion formula applies:
\[ g_N(x) = \int_{\R^k} \hat{g_N}(\xi) e^{2i\pi x\cdot\xi}\,\d\xi.\]
In particular, for any $x\in\R$,
\[ |g_N(x)-\gamma(x)| = \left|
\int_{\R^k} \bigl(\hat{g_N}(\xi)-\hat{\gamma}(\xi)\bigr) 
e^{2i\pi x\cdot\xi}\,\d \xi \right| \leq \int |\hat{g_N}-\hat{\gamma}|.\]
We separate between low and high frequencies, according to some
threshold $\delta \sqrt{N}$, to choose later:
\begeq\label{boundgNg}
\sup_{x\in\R^k} |g_N(x)-\gamma(x)|
\leq \int_{|\xi|>\delta\sqrt{N}}|\hat{g_N}| \ 
+ \ \int_{|\xi|>\delta\sqrt{N}} |\hat{\gamma}| \
+ \ \int_{|\xi|\leq \delta\sqrt{N}} |\hat{g_N}-\hat{\gamma}|.
\endeq

To estimate the first term in the right-hand side of~\eqref{boundgNg},
we use Proposition~\ref{propFourier}~(i):
there is $\alpha=\alpha(\delta,\chi,H)>0$ such that
$|\xi|\geq \eta\Longrightarrow |g(\xi)|\leq 1-\alpha$; so
\[\int_{|\xi|>\delta\sqrt{N}} |\hat{g_N}| = 
\int_{|\xi|>\delta\sqrt{N}} \left| \hat{g}\left(\frac{\xi}{\sqrt{N}}\right)
\right|^N\,\d \xi = \sqrt{N}^k \int_{|\xi|>\delta} |\hat{g}|^N\,\d\xi\]
\[ \leq \sqrt{N}^k (1-\alpha)^{N-p'} \int |\hat{g}|^{p'}.\]
Combining this with the Hausdorff-Young inequality, we find
\begeq\label{HF1} \int_{|\xi|>\delta\sqrt{N}} |\hat{g_N}|
\leq \sqrt{N}^k\, (1-\alpha)^{N-p'} \|g\|_{L^p}^{p'}.
\endeq

The second term in the right-hand side of~\eqref{boundgNg} can be bounded
by an explicit estimate:
\[ \int_{|\xi|>\delta\sqrt{N}} |\hat{\gamma}| \leq
k\frac{e^{-2\pi^2 N\delta^2}}{\delta\sqrt{N}}.\]

The third term in the right-hand side of~\eqref{boundgNg} is a bit more
subtle. On one hand, by a telescopic sum argument, since
$|\hat{g}|\leq 1$ and $|\hat{\gamma}|\leq 1$, we have
\[ | \hat{g_N}(\xi) - \hat{\gamma}(\xi)|
= \left| \hat{g}\left(\frac{\xi}{\sqrt{N}}\right)^N  - 
\hat{\gamma}\left(\frac{\xi}{\sqrt{N}}\right)^N \right|\leq
N \left| \hat{g}\left(\frac{\xi}{\sqrt{N}}\right)
- \hat{\gamma} \left(\frac{\xi}{\sqrt{N}}\right)\right|.\]
Now we can apply Proposition~\ref{propFourier} (ii), with $\xi$ replaced
by $\xi/\sqrt{N}$ and $\var=\max(\var_g,\var_\gamma)$ where
$\var_g$ is the function appearing in the Proposition:
\begin{multline} \label{LF1}
 \left| \hat{g}\left(\frac{\xi}{\sqrt{N}}\right) - \hat{\gamma}(\xi)\right|
\leq N \left| \hat{g}\left(\frac{\xi}{\sqrt{N}}\right)
- \left( 1-\frac{2\pi^2|\xi|^2}{N}\right)\right|+ \\
N \left| \hat{\gamma}\left(\frac{\xi}{\sqrt{N}}\right)
- \left( 1-\frac{2\pi^2|\xi|^2}{N}\right)\right| \leq 
2N \var(\delta) \left(\frac{\xi}{\sqrt{N}}\right)^2 = 2\,\var(\delta)\, \xi^2.
\end{multline}
Here $\var$ is a function depending only on $H$ and $\chi$.

On the other hand,
by Proposition~\ref{propFourier} (iii),
\[ |\hat{g_N}(\xi)-\hat{\gamma}(\xi)|
\leq \left|\hat{g}\left(\frac{\xi}{\sqrt{N}}\right)\right|^N
+ \hat{\gamma}(\xi) \leq
\max \left( 1-\frac{\pi^2|\xi|^2}{N},\ 1-\alpha_0\right)^N +
\hat{\gamma}(\xi).\]
Thanks to the inequality $(1-u/N)^N\leq e^{-u}$, we conclude that
\begin{multline}\label{LF2}
|\hat{g_N}(\xi)-\hat{\gamma}(\xi)|\leq
\max\Bigl(e^{-\pi^2|\xi|^2},\ (1-\alpha_0)^N\Bigr) +\hat{\gamma}(\xi) \\
\leq 2\max\Bigl(e^{-\pi^2|\xi|^2},\ (1-\alpha_0)^N\Bigr).
\end{multline}

By taking the geometric mean of~\eqref{LF1} and~\eqref{LF2}, we obtain
\[ \left| \hat{g}(\xi)-\hat{\gamma}(\xi)\right|
\leq \sqrt{2\var}\, |\xi| \max \Bigl( e^{-\frac{\pi^2|\xi|^2}2},\ 
(1-\alpha_0)^\frac{N}2\Bigr).\]
Then
\begin{align*} \int_{|\xi|\leq \delta\sqrt{N}}
\left| \hat{g}(\xi)-\hat{\gamma}(\xi)\right|
& \leq \sqrt{2\var} 
\left( \int_{|\xi|\leq\delta\sqrt{N}} |\xi|\,
e^{-\frac{\pi^2|\xi|^2}2}\,\d\xi \ + \ 
(1-\alpha_0)^{\frac{N}2}\int_{|\xi|\leq \delta\sqrt{N}}
|\xi|\,\d\xi\right) \\
& \leq \sqrt{2\var} \left( \int |\xi|\,
e^{-\frac{\pi^2|\xi|^2}2}\,\d\xi\ + \
(1-\alpha_0)^{\frac{N}2} |S^{k-1}|\frac{(\delta\sqrt{N})^{k+1}}{k+1} \right)\\
& \leq \sqrt{2\var} \left( \int |\xi|\,
e^{-\frac{\pi^2|\xi|^2}2}\,\d\xi\ + C(k,\alpha_0)\right)\
=:\ov{\var}(\delta,\chi,H)
\end{align*}
(where $C(k,\alpha_0)$ is a constant which does not depend on $N$).
This concludes the proof of (i).

To prove (ii), we let $p$ vary with $N$ in such a way that
$p$ remains of the order of $N$; for instance, $p'=N/2$
(for $N$ large enough). Then, as $N$ goes to infinity,
\[ \log \|g\|_{L^p}^{p'} =
\frac{1}{p-1} \log \int g^p\ \xrightarrow[p\to 1]{}\ \int g\log g.\]
So, when $N$ is large enough, the first term in the right-hand side
of (i) can be bounded by $(1-\alpha)^{N/2}e^H$, which does not
depend on the $L^p$ norm of $g$. (But ``large enough'' here may
depend on this norm!!)
\end{proof}

\Ack

\signec
\signmcc
\signjl
\signml
\signcv


\begin{thebibliography}{10}

\bibitem{BGL:1:91}
{\sc Bardos, C., Golse, F., and Levermore, D.}
\newblock Fluid dynamical limits of kinetic equations, {I}~: Formal derivation.
\newblock {\em J. Statist. Phys. 63\/} (1991), 323--344.


\bibitem{BCM}
{\sc Barthe, F., Cordero--Erausquin, D., and Maurey, B.}
\newblock Entropy of spherical marginals and related inequalities,
\newblock {\em J. Math. Pures Appl.}  {\bf 86} (2006), 89--99.


\bibitem{GBAZ:increasing:99}
{\sc Ben~Arous, G., and Zeitouni, O.}
\newblock Increasing propagation of chaos for mean field models.
\newblock {\em Ann. Inst. H. Poincar\'e Probab. Statist. 35}, 1 (1999),
  85--102.
  
\bibitem{BoCer}  {\sc Bobylev, A.~V., and Cercignani, C.} 
\newblock On the rate of entropy production for the Boltzmann equation. 
\newblock {\em J. Statist. Phys.} {\bf 94}, 3-4 (1999), 603--618.

\bibitem{car}
{\sc Carlen, E.~A.}
\newblock The rate of local equilibration in kinetic theory.
\newblock {\em Contemporary Mathematics}, {\bf 437}  (2007), 71--88.

\bibitem{carleman}
{\sc Carleman, T.}
\newblock Sur la solution de l'\'equation int\'egrodiff\'erentielle de
  Boltzmann.
\newblock {\em Acta. Math.} {\bf  60}, 3 (1933), 91--146.

\bibitem{CCL:spectralgap:01}
{\sc Carlen, E.~A., Carvalho, M.~C. and Loss, M.}
\newblock Many--body aspects of approach to equilibrium.
\newblock In {\em Journ\'ees \'Equations aux d\'eriv\'ees partielles}, N.~D.
  et. al., Ed. Univ. of Nantes, Nantes, 2000, pp.~165--185.
  
\bibitem{CCL2} {\sc Carlen, E.~A., Carvalho, M. and Loss, M.}
\newblock
Determination of the spectral gap for Kac's master equation and related stochastic evolution.
\newblock {\em Acta Math.}  {\bf 191} pp 1--54, 2003


\bibitem{CLL:entropy}
{\sc Carlen, E.~A., Lieb, E. and Loss, M.}
\newblock A sharp analog of Young's inequality on ${S^N}$ and related entropy
  inequalities.
\newblock {\em Jour. Geom. Analysis 14}, 3 (2004), 487--520.

\bibitem{CGL} {\sc Carlen, E.~A., Gernimo, J. and Loss, M.}
\newblock
Determination of the spectral gap in the Kac model for physical momentum and energy conserving collisions
\newblock {\em To appear in  SIAM Jour. on Math. Analysis}, 2008



\bibitem{CL}
{\sc Carlen, E.~A.,  and Lu, M.}
\newblock Fast and slow convergence to equilibrium Maxwellian molecules via Wild sums.
\newblock {\em Jour. Stat. Phys.}, {\bf 112} no. 1/2, (2003) pp. 59-134


\bibitem{feller:vol2:71}
{\sc Feller, W.}
\newblock {\em An introduction to probability theory and its applications.
  {V}ol. {II}.}
\newblock Second edition. John Wiley \& Sons Inc., New York, 1971.

\bibitem{FoataFuchs}
{\sc Foata, D., and Fuchs, A.}
\newblock {\em Calcul des probabilit\'es}.
\newblock Dunod, Paris, 1986.

\bibitem{GSR:NS:04}
{\sc Golse, F., and Saint-Raymond, L.}
\newblock The {N}avier-{S}tokes limit of the {B}oltzmann equation for bounded
  collision kernels.
\newblock {\em Invent. Math. 155}, 1 (2004), 81--161.

\bibitem{janvr:kac:01}
{\sc Janvresse, E.}
\newblock Spectral gap for {K}ac's model of {B}oltzmann equation.
\newblock {\em Ann. Probab. 29}, 1 (2001), 288--304.


\bibitem{Kac} 
{\sc Kac, M.} 
\newblock Foundations of kinetic theory.
\newblock Proc. 3rd Berkeley
Symp. Math. Stat. Prob., J. Neyman, ed. Univ. of California, vol. 3, 171--197, 1956.



\bibitem{kosy:excl:01}
{\sc Kosygina, E.}
\newblock The behavior of the specific entropy in the hydrodynamic scaling
  limit.
\newblock {\em Ann. Probab. 29}, 3 (2001), 1086--1110.

\bibitem{LV}
{\sc Lott, J., and Villani, C.}
\newblock Ricci curvature for metric-measure spaces via optimal transport.
\newblock Preprint (2004), available online via 
{\tt http://www.umpa.ens-lyon.fr/\~{ }cvillani/}.

\bibitem{major:problemsfourier}
{\sc Major, P.}
\newblock Series of problems in probability theory. {T}he central limit theorem
  and {F}ourier analysis, {I}.
\newblock Available online at {\tt
  www.renyi.hu/~major/probability/central.html}.

\bibitem{maslen:kac}
{\sc Maslen, D.}
\newblock The eigenvalues of {K}ac's master equation.
\newblock {\em Math. Zeit. 243\/} (2003), 291--331.

\bibitem{mehler}
{\sc Mehler, F.~G.}
\newblock Ueber die {E}ntwicklung einer {F}unction von beliebig vielen
  {V}ariablen nach {L}aplaschen {F}unctionen h\"oherer {O}rdnungn.
\newblock {\em Crelle's Journal 66\/} (1866), 161--176.

\bibitem{sznit:chaos:91}
{\sc Sznitman, A.-S.}
\newblock Topics in propagation of chaos.
\newblock In {\em \'Ecole d'\'Et\'e de Probabilit\'es de Saint-Flour
  XIX---1989}. Springer, Berlin, 1991, pp.~165--251.

\bibitem{vill:handbook:02}
{\sc Villani, C.}
\newblock A review of mathematical topics in collisional kinetic theory.
\newblock In {\em Handbook of mathematical fluid dynamics, Vol. I}.
  North-Holland, Amsterdam, 2002, pp.~71--305.

\bibitem{vill:cer:03}
{\sc Villani, C.}
\newblock Cercignani's conjecture is sometimes true and always almost true.
\newblock {\em Comm. Math. Phys. 234}, 3 (2003), 455--490.

\end{thebibliography}
\end{document}